\newtheorem{The}{Theorem}[section]
\newtheorem{Pro}[The]{Proposition}
\newtheorem{Def}[The]{Definition}
\newtheorem{Cor}[The]{Corollary}
\newtheorem{Lem}[The]{Lemma}
\newtheorem{Rem}[The]{Remark}
\newtheorem{Exe}[The]{Example}
\newcommand{\diag}[3]{ \foreach \t in {1,...,#3} {\draw[thick] (#1+\t,#2-1) rectangle (#1+\t-1,#2);} }
\newcommand{\mdiagp}[3]{ 
\foreach \t in {1,...,#3} {\filldraw[thick, fill=lightgray, draw=black] (#1+\t,#2-1) rectangle (#1+\t-1,#2);}}
\newcommand{\mdiag}[3]{ 
\foreach \t in {1,...,3} {\filldraw[thick, fill=lightgray, draw=black] (#1+\t,#2-1) rectangle (#1+\t-1,#2);}
\foreach \t in {4,...,#3} {\draw[thick] (#1+\t,#2-1) rectangle (#1+\t-1,#2);}}
\newcommand{\diagg}[4]{ \foreach \t in {1,...,#3} {\draw[thick] (#1+\t,#2-1) rectangle (#1+\t-1,#2);} \foreach \t in {1,...,#4} {\draw[thick] (#1+\t,#2-1) rectangle (#1+\t-1,#2-2);} }
\newcommand{\mdiaggp}[4]{ \foreach \t in {1,...,#3} {\filldraw[thick, fill=lightgray, draw=black] (#1+\t,#2-1) rectangle (#1+\t-1,#2);}
\foreach \t in {1,...,#4} {\draw[thick] (#1+\t,#2-1) rectangle (#1+\t-1,#2-2);} }
\newcommand{\mdiagg}[4]{ \foreach \t in {1,...,3} {\filldraw[thick, fill=lightgray, draw=black] (#1+\t,#2-1) rectangle (#1+\t-1,#2);}
\foreach \t in {4,...,#3} {\draw[thick] (#1+\t,#2-1) rectangle (#1+\t-1,#2);}
\foreach \t in {1,...,#4} {\draw[thick] (#1+\t,#2-1) rectangle (#1+\t-1,#2-2);} }
\newcommand{\diaggg}[5]{ \foreach \t in {1,...,#3} {\draw[thick] (#1+\t,#2-1) rectangle (#1+\t-1,#2);} \foreach \t in {1,...,#4} {\draw[thick] (#1+\t,#2-1) rectangle (#1+\t-1,#2-2);}
                         \foreach \t in {1,...,#5} {\draw[thick] (#1+\t,#2-2) rectangle (#1+\t-1,#2-3);} }
\newcommand{\mdiagggp}[5]{ \foreach \t in {1,...,#3} {\filldraw[thick, fill=lightgray, draw=black] (#1+\t,#2-1) rectangle (#1+\t-1,#2);}
\foreach \t in {1,...,#4} {\draw[thick] (#1+\t,#2-1) rectangle (#1+\t-1,#2-2);}
                         \foreach \t in {1,...,#5} {\draw[thick] (#1+\t,#2-2) rectangle (#1+\t-1,#2-3);} }                        
\newcommand{\mdiaggg}[5]{ \foreach \t in {1,...,3} {\filldraw[thick, fill=lightgray, draw=black] (#1+\t,#2-1) rectangle (#1+\t-1,#2);}
\foreach \t in {4,...,#3} {\draw[thick] (#1+\t,#2-1) rectangle (#1+\t-1,#2);}
\foreach \t in {1,...,#4} {\draw[thick] (#1+\t,#2-1) rectangle (#1+\t-1,#2-2);}
                         \foreach \t in {1,...,#5} {\draw[thick] (#1+\t,#2-2) rectangle (#1+\t-1,#2-3);} }
\newcommand{\mdiaggggp}[6]{ \foreach \t in {1,...,#3} {\filldraw[thick, fill=lightgray, draw=black] (#1+\t,#2-1) rectangle (#1+\t-1,#2);}
\foreach \t in {1,...,#4} {\draw[thick] (#1+\t,#2-1) rectangle (#1+\t-1,#2-2);}
                         \foreach \t in {1,...,#5} {\draw[thick] (#1+\t,#2-2) rectangle (#1+\t-1,#2-3);} \foreach \t in {1,...,#6} {\draw[thick] (#1+\t,#2-3) rectangle (#1+\t-1,#2-4);} }
\def\eseq{-10} 
\newcommand{\strand}[2]{
	\fill (#1,#2) circle (0.2);
	\draw[thick] (#1,#2) -- ++(0,-4);
	\fill (#1,#2-4) circle (0.2);
} 
\newcommand{\slab}[2]{
	\node at #1 {\scriptsize #2};
} 
\newcommand{\scirc}[2]{
	\fill (#1,#2) circle (0.2);
} 
\newcommand{\ocross}[2]{
\scirc{#1}{#2}
\scirc{#1}{#2-4}
\scirc{#1+4}{#2}
\scirc{#1+4}{#2-4}
\draw[thick] (#1+4,#2)..controls +(0,-2) and +(0,+2) .. (#1,#2-4);
\draw[thick] (#1,#2)..controls +(0,-2) and +(0,+2) .. (#1+4,#2-4);
} 
\newcommand{\ellk}[2]{
	\draw[white,fill=lightgray] (#1-1,#2-2) rectangle (#1+1,#2+2);
	\fill (#1,#2+2) ellipse (1.4cm and 0.2cm);
	\fill (#1,#2-2) ellipse (1.4cm and 0.2cm);
	\draw[thick] (#1-1,#2+2) -- (#1-1,#2-2);
	\slab{(#1,#2)}{$k$}
} 
\newcommand{\ellp}[2]{
	\draw[white,fill=lightgray] (#1-1,#2-2) rectangle (#1+1,#2+2);
	\fill (#1,#2+2) ellipse (1.4cm and 0.2cm);
	\fill (#1,#2-2) ellipse (1.4cm and 0.2cm);
	\draw[thick] (#1-1,#2+2) -- (#1-1,#2-2);
	\slab{(#1,#2)}{$\scriptstyle{k-1}$}
} 
\newcommand{\ellstrand}[2]{
	\draw[thick] (#1+1,#2+2) -- (#1+1,#2-2);
} 
\newcommand{\ellocross}[2]{
	\draw[thick] (#1+4,#2+2)..controls +(0,-2) and +(0,+2) .. (#1+1,#2-2);
	\draw[thick] (#1+1,#2+2)..controls +(0,-2) and +(0,+2) .. (#1+4,#2-2);
	\fill (#1+4,#2+2) circle (0.2);
	\fill (#1+4,#2-2) circle (0.2);
} 
\newcommand{\ellU}[2]{
	\draw[lightgray,fill=lightgray] (#1-1,#2-2) rectangle (#1-0.5,#2+2);
	\fill (#1,#2+2) ellipse (1.4cm and 0.2cm);
	\fill (#1,#2-2) ellipse (1.4cm and 0.2cm);
	\draw[thick] (#1-1,#2+2) -- (#1-1,#2-2);
	\draw[thick] (#1-0.5,#2+2) -- (#1-0.5,#2-2);
	
	\fill (#1+12,#2+2) circle (0.2);
	\fill (#1+12,#2-2) circle (0.2);
	\fill (#1+8,#2+2) circle (0.2);
	\fill (#1+8,#2-2) circle (0.2);
	
	\draw[thick] (#1+1,#2-2)..controls +(0,2.5) and +(0,-5) .. (#1+12,#2+2);
	\draw[thick] (#1+0.5,#2-2)..controls +(0,3.5) and +(0,-5) .. (#1+8,#2+2);  
	\draw[thick] (#1,#2-2)..controls +(0,3) and +(0,-3)  .. (#1+4,#2+2);
	
	\draw[thick] (#1+1,#2+2)..controls +(0,-1) and +(-1,0) .. (#1+2.5,#2+1.2) -- (#1+9,#2+1.2) ..controls +(2.5,0) and +(0,2.5) .. (#1+12,#2-2);
	\draw[thick] (#1+0.5,#2+2)..controls +(0,-1.2) and +(-1.2,0) .. (#1+2.5,#2+0.6) -- (#1+5,#2+0.6) ..controls +(2.5,0) and +(0,2.5) .. (#1+8,#2-2);
	\draw[thick] (#1,#2+2)..controls +(0,-3) and +(0,3) .. (#1+4,#2-2);
	\fill (#1+4,#2+2) circle (0.2);
	\fill (#1+4,#2-2) circle (0.2);
}
\definecolor{mygray}{gray}{0.9}                    
\title{\bf Fused permutations algebras and degenerate affine Hecke algebras}
\numberwithin{equation}{section}
\renewcommand*{\Affilfont}{\normalsize\small}
\author{DEMESMAY Yoann}
\affil{Laboratoire de math\'ematiques de Reims UMR 9008, Universit\'e de Reims Champagne-Ardenne,\newline\vspace{.9em} 
Moulin de la Housse BP 1039, 51100 Reims, France.}
	\renewcommand\AB@affilsepx{: \protect\Affilfont}
	\affil{E-mail address:}
	\renewcommand\AB@affilsepx{, \protect\Affilfont}
	\affil{yoann.demesmay-michaud@univ-reims.fr}
\begin{document}
\maketitle
\abstract{This paper gives an algebraic presentation of an algebra called the fused permutations algebra in the one-boundary case. It is obtained through a detailed study of the degenerate cyclotomic Hecke algebra. In particular, we prove that the fused permutations algebra is a quotient of the degenerate cyclotomic affine Hecke algebra, and we also describe a basis combinatorially in terms of signed permutations with avoiding patterns. In order to understand this quotient, we study the primitive idempotents of this degenerate cyclotomic affine Hecke algebra.}
\section{Introduction}
The symmetric group $\mathfrak{S}_n$ appears in the Schur–Weyl duality \cite{GW} describing the centralisers of tensor powers of the vector representation of the linear group $GL_N$. Considering $V$ a vector space of dimension $N$, there is a surjective morphism:
\begin{equation*}
\mathbb{C}\mathfrak{S}_n \longrightarrow End_{GL_N}(V^{\otimes n}).
\end{equation*}

The symmetric group algebra $\mathbb{C}\mathfrak{S}_n$ does not depend on $N$ and plays its role for $GL_N$ for any $N$. The dependence on $N$ of the centraliser appears in the description of the kernel of the above map. Indeed, for a given $N$, the centraliser of $GL_N$ is isomorphic to the quotient, when $n\geq N+1$, of $\mathbb{C}\mathfrak{S}_n$ by the antisymmetriser on $N +1$ points defined by:
\begin{equation*}
\sum_{\delta\in \mathfrak{S}_{N+1}}(-1)^{\ell(\delta)}\delta\in \mathbb{C}\mathfrak{S}_{N+1}\subset \mathbb{C}\mathfrak{S}_n.
\end{equation*}
where $\ell$ is the length function on the Coxeter generators, see \cite{GP}. This antisymmetriser is a minimal central idempotent of $\mathbb{C}\mathfrak{S}_{N+1}$ (the quotient is trivial if $n \leq N$) and generates the kernel for any $n \geq N + 1$. Hence we know an explicit algebraic description of the $GL_N$ centraliser on $V^{\otimes n}$.

We would like to generalize this result to a new algebra called the fused permutations algebra. This last one was introduced in \cite{CP} for this purpose. For $k\in \mathbb{Z}_{\geq 0}$, we have a surjective morphism:
\begin{equation*}
H_{k,n}  \longrightarrow End_{GL_N}(S^k(V)\otimes V^{\otimes n}),
\end{equation*}
where $S^k(V)$ is the $k$-th symmetric power and $H_{k,n}$ is called the fused permutations algebra. Again, the algebra $H_{k,n}$ does not depend on $N$ and for large $N$ is exactly the centraliser. The dependence of the centralisers on $N$ appears in the kernel of the surjective map, of which an explicit description is conjectured in \cite{CP} and proved in some cases, including the ones we will study in this paper. Therefore, what remains to be done is to find an explicit algebraic description of the fused permutations algebra $H_{k,n}$.

The fused permutations algebra can be obtained via the fused Hecke algebra which is a flat deformation of the latter. This algebra has been studied in \cite{ZP} by linking it to the cyclotomic Hecke algebra, a quotient of the affine Hecke algebra of type $A$.

The first main goal of this paper is to give an algebraic presentation of the fused permutations algebra $H_{k,n}$ and a basis in terms of type $B$ combinatorial objects.

The second main purpose of this paper, which comes from the first one, is the study of primitive idempotents of the cylotomic degenerate affine Hecke algebra. Indeed, the construction of a complete set of primitive orthogonal idempotents of this algebra is well known (see \cite{HM}), and we "linearize" some particular cases of this construction. In other words, we give a decomposition of these particular idempotents on a specific basis of the algebra.

We now describe more precisely the algebras involved in the paper.
\paragraph{The algebra $H_{k,n}$.}
The algebra of fused permutations $H_{k,n}$ can be seen in different ways. We present through the second section two ways. One consists of seeing this algebra with a basis indexed by diagrams with a certain multiplication. The other one, more algebraic, consists of seeing it as a certain subalgebra of the algebra of symmetric group on $k+n$ elements. More precisely, we consider the algebra $P_k\mathbb{C}\mathfrak{S}_{k+n}P_k$ where $P_k$ is a minimal central idempotent of $\mathbb{C}\mathfrak{S}_k$, called symmetriser on $k$ points, identified as an element of $\mathbb{C}\mathfrak{S}_{k+n}$ via the inclusion $\mathbb{C}\mathfrak{S}_k\subset \mathbb{C}\mathfrak{S}_{k+n}$. These two constructions are isomorphic as algebras. We have a good understanding of the representation theory. Namely, the algebra is semisimple and the irreducible representations are indexed by partitions $\lambda \vdash k+n$ having its first component greater than $k$.
\paragraph{The algebra $\mathcal{\hat{H}}_n^{(\kappa_1,\kappa_2)}$.} We consider the cyclotomic degenerate affine Hecke algebra $\mathcal{\hat{H}}_n^{(\kappa_1,\kappa_2)}$ defined over $\mathbb{C}$. The parameters $\kappa_1$ and $\kappa_2$ correspond to the eigenvalues of the polynomial generator $x_1$.
The algebra $\mathcal{\hat{H}}_n^{(\kappa_1,\kappa_2)}$ has a standard basis indexed by the signed permutations and we also have a good understanding of the representation theory. Namely, the semisiplicity of the algebra is characterized by the values $\kappa_1,\kappa_2$ and $n$. Furthermore, in the semisimple case, the irreducible representations are indexed by bipartitions of $n$. Among these irreducible representations, four of them are of dimension $1$ and they correspond to the following bipartitions:
\begin{equation*}\begin{array}{lllllll}
(\Box\dots\Box\,,\,\emptyset), &\quad& (\begin{array}{l}\Box\\[-0.8em] \vdots\\[-0.4em] \Box\end{array}\,,\,\emptyset), & \quad &
(\emptyset\,,\,\Box\dots\Box), &\quad& (\emptyset\,,\,\begin{array}{l}\Box\\[-0.8em] \vdots\\[-0.4em] \Box\end{array}).
\\
\ \ (1,\kappa_1) &&(-1,\kappa_1) && \ \ (1,\kappa_2) && (-1,\kappa_2)
\end{array}
\end{equation*}
Each of these one-dimensional representations corresponds to a choice of eigenvalues for the generators, as indicated above. Moreover, explicit expressions for the minimal central idempotents corresponding to these representations are studied in the third section. These idempotents are denoted $F_n^{(\alpha_1,\alpha_2)}$, where $(\alpha_1,\alpha_2)$ are the corresponding eigenvalues, and will be crucial to all our constructions.

\paragraph{The algebra $\mathcal{A}_n^{\kappa}$.} The algebra $\mathcal{A}_n^\kappa$ is obtained from $\mathcal{\hat{H}}_n^{(\kappa_1,\kappa_2)}$ by quotienting out the idempotent $F_2^{(-1,\kappa_2)}$. This quotient is one of the main object of study of the third section. Quite
naturally from its definition, the irreducible representations of the algebra $\mathcal{A}_n^\kappa$ are indexed by bipartitions with a one-row partition as the second component. Our first main result is that the algebra $\mathcal{A}_n^\kappa$ has a basis in terms of signed permutations with avoiding patterns.

\paragraph{The algebra $\mathcal{A}_n^{\kappa,(k)}$.} The last section is mainly devoted to the algebraic description of the fused permutation algebra $H_{k,n}$ . For this purpose, we define the algebra $\mathcal{A}_n^{\kappa,(k)}$ as a quotient of $\mathcal{A}_n^\kappa$ by $F_{k+1}^{(1,\kappa_1)}$. The main result of the section is that, when we specialize the indeterminates $\kappa_1$ and $\kappa_2$ by giving them specific values, the algebras $\mathcal{A}_n^{\kappa,(k)}$ and $H_{k,n}$ are isomorphic. This leads to a presentation of the fused permutation algebra in terms of generators and relations. Again, a basis of $\mathcal{A}_n^{\kappa,(k)}$ in terms of signed permutations with avoiding patterns is provided .

\bigskip

Thus, all algebras involved in the paper are obtained by quotienting by some of the idempotents mentionned above, as summarised in the following picture:
\begin{center}
\begin{tikzpicture}[scale=1]
\node at (-4.5,-2.2) {$\hat{\mathcal{H}}_n^{(\kappa_1,\kappa_2)}$};

\draw[->] (-3.8,-2.2) -- (-2,-2.2);

\node at (5,-2.2) {\footnotesize $\left(\begin{array}{l}\kappa_1=0\\ \kappa_2=k+1\end{array}\right).$};
\node at (-2.5,-1.8) {\footnotesize $F_2^{(-1,\kappa_2)}$};

\node at (-1.5,-2.2) {$\mathcal{A}_n^{\kappa}$};
\node at (2,-2.2) {$\mathcal{A}_n^{\kappa,(k)}\cong H_{k,n}$};

\draw[->,densely dashed] (-1,-2.2) -- (0.8,-2.2);
\node at (-0.1,-1.8) {\footnotesize $F_{k+1}^{(1,\kappa_1)}$};
\end{tikzpicture}
\end{center}
The full line represents a genuine quotient, while dashed line represents a quotient combined with a specialisation of the parameters $\kappa_1,\kappa_2$ as indicated in the diagram.

\paragraph{Acknowledgments:} I would like to thank my advisor Loïc Poulain d'Andecy for introducing me to these subjects, for many
helpful discussions, opinions,  support and who guided me all the way long.
\tableofcontents
\section{Fused permutations algebra}\label{s2}
This section is based on \cite{CP} to which we refer for more details.
\subsection{Combinatorial definition}
A \textbf{multiset} is the data of a pair $(E,m)$ where $E$ is a set and $m:E\longrightarrow \mathbb{Z}_{\geq 1}$ is a function.

In other words, a multiset is a set where we take into account possible repetitions. The order does not matter, as in a set. We will use a notation with double accolades for a multiset.

For example, $\lbrace\lbrace 4,4,3,4,1,2,1\rbrace\rbrace$ is a multiset containing elements of $\lbrace 1,2,3,4\rbrace$. The function is $m(1)=2$, $m(2)=1$, $m(3)=1$, $m(4)=3$.

Let $n \in \mathbb{Z}_{\geq 0}$ and $k\geq 1$.

\begin{Def}
The set $S_{k,n}$ consists of all sequences $(S_1,\ldots ,S_{n+1})$ satisfying the following conditions:
\begin{enumerate}
\item[•]
$S_1$ is a multiset containing $k$ elements of $\lbrace 1,\ldots ,n+1\rbrace$.
\item[•] The sets $\lbrace S_i\rbrace_{i\in \lbrace 2,\ldots ,n+1\rbrace}$ are singletons of elements of $\lbrace 1,\ldots ,n+1\rbrace$.
\item[•] The element $1$ appears $k$ times among $S_1,\ldots ,S_{n+1}$, while $2,\ldots ,n+1$ appear each a single time among $S_1,\ldots ,S_{n+1}$.
\end{enumerate}
An element of $S_{k,n}$ is called a \textbf{fused permutation}.
\end{Def}
For example, for $n=k=2$, $(\lbrace \lbrace 1,3\rbrace\rbrace, \lbrace\lbrace 1\rbrace\rbrace, \lbrace\lbrace 2\rbrace\rbrace)\in S_{2,2}$. 

\paragraph{Diagrammatic representation}
We can view these objects in a more geometric way.

We fix two horizontal lines of \( n+1 \) points, one above the other. We connect the points on the upper line to those on the lower line, imposing the following conditions:

There are \( k \) edges starting from the first point on top, and likewise, \( k \) edges arriving at the first point on bottom.

For \( a = 2, \ldots, n+1 \), there is exactly one edge leaving the \( a \)-th point and one edge arriving at the \( a \)-th point.

We observe that, in total, there are \( k + n \) edges.

Such a diagram corresponds to a fused permutation as follows. The multiset $S_1$ corresponds to the points at the bottom reached from the first point. Likewise, the singletons $S_a$, $a\geq 2$, correspond to the point connected to the $a$-th point on top.

\begin{Exe}
\begin{itemize}
\item[•] Let $n\in \mathbb{Z}_{\geq 0}$ and $k=1$. A fused permutation can easily be identified with a permutation on \( n+1 \) elements due to the imposed condition. Thus, \( S_{k,n} \) is indeed a generalization of \( \mathfrak{S}_{n+1} \).
\item[•] For $n=k=2$, we have $Card(S_{k,n})=7$. For example:
\begin{center}
\begin{tikzpicture}[scale=0.3]
\fill (21,2) ellipse (0.6cm and 0.2cm);\fill (21,-2) ellipse (0.6cm and 0.2cm);
\draw[thick] (20.8,2) -- (20.8,-2);\draw[thick] (21.2,2)..controls +(0,-2) and +(0,+2) .. (27,-2);  
\fill (24,2) ellipse (0.6cm and 0.2cm);\fill (24,-2) ellipse (0.6cm and 0.2cm);
\draw[thick] (24,2)..controls +(0,-2) and +(0,+2) .. (21,-2); 
\fill (27,2) ellipse (0.6cm and 0.2cm);\fill (27,-2) ellipse (0.6cm and 0.2cm);
\draw[thick] (27,2)..controls +(0,-2) and +(0,+2)..(24,-2);
\node at (38,0) {\text{corresponds to} $(\{1,3\},\{1\},\{2\}).$};
\end{tikzpicture}
\end{center}
\item[•] For $n=2$ and $k=3$, we have for example:
\begin{center}
\begin{tikzpicture}[scale=0.3]
\fill (21,2) ellipse (0.6cm and 0.2cm);\fill (21,-2) ellipse (0.6cm and 0.2cm);
\draw[thick] (20.7,2) -- (20.7,-2);\draw[thick] (21.2,2)..controls +(0,-2) and +(0,+2) .. (27,-2);  
\draw[thick] (21,2)..controls +(0,-2) and +(0,+2) .. (24,-2);
\fill (24,2) ellipse (0.6cm and 0.2cm);\fill (24,-2) ellipse (0.6cm and 0.2cm);
\draw[thick] (24,2)..controls +(0,-2) and +(0,+2) .. (21,-2); 
\fill (27,2) ellipse (0.6cm and 0.2cm);\fill (27,-2) ellipse (0.6cm and 0.2cm);
\draw[thick] (27,2)..controls +(0,-2) and +(0,+2)..(21.3,-2);
\node at (38,0) {\text{corresponds to} $(\{1,2,3\},\{1\},\{1\}).$};
\end{tikzpicture}
\end{center}
\begin{center}
\begin{tikzpicture}[scale=0.3]
\fill (21,2) ellipse (0.6cm and 0.2cm);\fill (21,-2) ellipse (0.6cm and 0.2cm);
\draw[thick] (20.7,2) -- (20.7,-2);
\draw[thick] (21,2) -- (21,-2);
\draw[thick] (21.3,2)..controls +(0,-2) and +(0,+2) .. (24,-2);
\fill (24,2) ellipse (0.6cm and 0.2cm);\fill (24,-2) ellipse (0.6cm and 0.2cm);
\draw[thick] (24,2)..controls +(0,-2) and +(0,+2) .. (27,-2); 
\fill (27,2) ellipse (0.6cm and 0.2cm);\fill (27,-2) ellipse (0.6cm and 0.2cm);
\draw[thick] (27,2)..controls +(0,-2) and +(0,+2)..(21.3,-2);
\node at (38,0) {\text{corresponds to} $(\{1,1,2\},\{3\},\{1\}).$};
\end{tikzpicture}
\end{center}
\end{itemize}
\end{Exe}

\paragraph{Multiplication of fused permutations}
We will now define an (associative) algebra based on the set \( S_{k,n} \), with the underlying vector space being the vector space over \( \mathbb{C} \), where the basis is indexed by \( S_{k,n} \). It remains to define a multiplication (associative and unital) on this basis in order to equip it with the structure of an algebra.
Let \( d, d' \in S_{k,n} \) be identified with their corresponding diagrams. We proceed in three steps:

\begin{itemize}
    \item[•] \underline{Concatenation:} We place the diagram of \( d \) above the diagram of \( d' \), identifying the bottom line of \( d \) with the top line of \( d' \).
    
    \item[•] \underline{Elimination of middle points:} There are \( k \) edges arriving and \( k \) edges leaving from the first point in the middle line. We then remove the first point of this middle line and sum over all possibilities of connecting the \( k \) arriving edges to the \( k \) leaving edges from this point (so there are \( k! \) possibilities).
    
    For the other points in the middle, one edge arrives and one edge leaves from each point: we connect these two edges (there is only one possibility per point).
    
    \item[•] \underline{Normalization:} We divide the result by the number of possibilities, which is \( k! \).
\end{itemize}
For example, let $n=1$ and $k=2$. Let us see a detailed example of a product of two elements of \( S_{2,1} \):

\begin{center}
\begin{tikzpicture}[scale=0.26]
\fill (1,2) ellipse (0.6cm and 0.2cm);\fill (1,-2) ellipse (0.6cm and 0.2cm);
\draw[thick] (0.8,2) -- (0.8,-2);\draw[thick] (1.2,2)..controls +(0,-2) and +(0,+2) .. (3.8,-2);  
\fill (4,2) ellipse (0.6cm and 0.2cm);\fill (4,-2) ellipse (0.6cm and 0.2cm);
\draw[thick] (3.8,2)..controls +(0,-2) and +(0,+2) .. (1.2,-2);
\node at (6,0) {$.$};
\fill (8,2) ellipse (0.6cm and 0.2cm);\fill (8,-2) ellipse (0.6cm and 0.2cm);
\draw[thick] (7.8,2) -- (7.8,-2);\draw[thick] (8.2,2)..controls +(0,-2) and +(0,+2) .. (10.8,-2);  
\fill (11,2) ellipse (0.6cm and 0.2cm);\fill (11,-2) ellipse (0.6cm and 0.2cm);
\draw[thick] (10.8,2)..controls +(0,-2) and +(0,+2) .. (8.2,-2); 
\node at (13,0) {$=$};
\fill (15,4) ellipse (0.6cm and 0.2cm);\fill (15,0) ellipse (0.6cm and 0.2cm);
\draw[thick] (14.8,4) -- (14.8,0);\draw[thick] (15.2,4)..controls +(0,-2) and +(0,+2) .. (17.8,0);  
\fill (18,4) ellipse (0.6cm and 0.2cm);\fill (18,0) ellipse (0.6cm and 0.2cm);
\draw[thick] (17.8,4)..controls +(0,-2) and +(0,+2) .. (15.2,0);

\fill (15,0) ellipse (0.6cm and 0.2cm);\fill (15,-4) ellipse (0.6cm and 0.2cm);
\draw[thick] (14.8,0) -- (14.8,-4);\draw[thick] (15.2,0)..controls +(0,-2) and +(0,+2) .. (17.8,-4);  
\fill (18,0) ellipse (0.6cm and 0.2cm);\fill (18,-4) ellipse (0.6cm and 0.2cm);
\draw[thick] (17.8,0)..controls +(0,-2) and +(0,+2) .. (15.2,-4); 

\node at (20.5,0) {$=\frac{1}{2}\Bigl($};
\fill (22.5,4) ellipse (0.6cm and 0.2cm);
\draw[thick] (22.3,4) -- (22.3,0);\draw[thick] (22.7,4)..controls +(0,-2) and +(0,+2) .. (25.3,0);  
\fill (25.5,4) ellipse (0.6cm and 0.2cm);
\draw[thick] (25.3,4)..controls +(0,-2) and +(0,+2) .. (22.7,0); 

\fill (22.5,-4) ellipse (0.6cm and 0.2cm);
\draw[thick] (22.3,0) -- (22.3,-4);\draw[thick] (22.7,0)..controls +(0,-2) and +(0,+2) .. (25.3,-4);  
\fill (25.5,-4) ellipse (0.6cm and 0.2cm);
\draw[thick] (25.3,0)..controls +(0,-2) and +(0,+2) .. (22.7,-4); 

\node at (27.5,0) {$+$};
\fill (29.5,4) ellipse (0.6cm and 0.2cm);
\draw[thick] (29.3,4) -- (29.3,0.5);\draw[thick] (29.7,4)..controls +(0,-2) and +(0,+2) .. (32.3,0);  
\fill (32.5,4) ellipse (0.6cm and 0.2cm);
\draw[thick] (32.3,4)..controls +(0,-2) and +(0,+2) .. (29.7,0.5); 

\draw[thick] (29.3,0.5)..controls +(0,-0.2) and +(0,+0.2) .. (29.7,-0.5);
\draw[thick] (29.7,0.5)..controls +(0,-0.2) and +(0,+0.2) .. (29.3,-0.5);

\fill (29.5,-4) ellipse (0.6cm and 0.2cm);
\draw[thick] (29.3,-0.5) -- (29.3,-4);\draw[thick] (29.7,-0.5)..controls +(0,-2) and +(0,+2) .. (32.3,-4);  
\fill (32.5,-4) ellipse (0.6cm and 0.2cm);
\draw[thick] (32.3,0)..controls +(0,-2) and +(0,+2) .. (29.7,-4); 

\node at (34,0) {$\Bigr)$};

\node at (36,0) {$=\frac{1}{2}$};
\fill (38,2) ellipse (0.6cm and 0.2cm);\fill (38,-2) ellipse (0.6cm and 0.2cm);
\draw[thick] (37.8,2) -- (37.8,-2);\draw[thick] (38.2,2) -- (38.2,-2);
\fill (41,2) ellipse (0.6cm and 0.2cm);\fill (41,-2) ellipse (0.6cm and 0.2cm);
\draw[thick] (41,2) -- (41,-2);

\node at (43,0) {$+\frac{1}{2}$};

\fill (45,2) ellipse (0.6cm and 0.2cm);\fill (45,-2) ellipse (0.6cm and 0.2cm);
\draw[thick] (44.8,2) -- (44.8,-2);\draw[thick] (45.2,2)..controls +(0,-2) and +(0,+2) .. (48,-2);  
\fill (48,2) ellipse (0.6cm and 0.2cm);\fill (48,-2) ellipse (0.6cm and 0.2cm);
\draw[thick] (48,2)..controls +(0,-2) and +(0,+2) .. (45.2,-2);
\node at (49,0){$.$};
\end{tikzpicture}
\end{center}
The algebra \( H_{k,n}\) defined above is called the \textbf{fused permutations algebra}.

It is clearly associative and it is a unital algebra: indeed, \[ 1_{H_{k,n}} := (\{ \underbrace{1, \ldots, 1}_{k \text{ times}} \}, \{ 2 \}, \ldots, \{ n+1 \}) \] is the unit of $H_{k,n}$ and its corresponding diagram is:

\begin{align}
 	&\mathbf{1} :=
 	\begin{tikzpicture}[scale=0.3,baseline={([yshift=\eseq]current bounding box.center)}]	
		\ellk{0}{0}
		\ellstrand{0}{0}
		\slab{(4,3)}{$1$}
		\strand{4}{2}
		\node at (6,0) {$\dots$};
		\slab{(8,3)}{$n$}
		\strand{8}{2}
		\node at (10   ,0) {,};
	\end{tikzpicture}
\end{align}
where the shaded area corresponds to $k$ vertical parallel edges.
\begin{Pro}
The algebra \( H_{k,n} \) has dimension:
\begin{equation}\label{eq: dim H_{k,n}}
\dim(H_{k,n}) = \sum_{i=0}^{\min(k,n)} \binom{n}{i}^2 (n-i)!.
\end{equation}
\end{Pro}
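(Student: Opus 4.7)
The plan is to compute the cardinality of the indexing set $S_{k,n}$ directly, since by construction the algebra $H_{k,n}$ has a basis indexed by fused permutations and so $\dim(H_{k,n}) = |S_{k,n}|$. I would partition $S_{k,n}$ according to the statistic
\[
i := \#\bigl\{a \in \{2,\dots,n+1\} : a \in S_1\bigr\},
\]
i.e.\ the number of elements of $\{2,\dots,n+1\}$ appearing in the multiset $S_1$. Since each such element can appear at most once (because $2,\dots,n+1$ each occur a single time in the whole sequence), $i$ is a well-defined integer, and the constraints on sizes force $0 \leq i \leq \min(k,n)$.

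Next I would count the fused permutations with a fixed value of $i$ in three independent steps. First, choose which $i$ elements of $\{2,\dots,n+1\}$ sit inside $S_1$; this gives a factor of $\binom{n}{i}$. Once those elements are fixed, the multiset $S_1$ is completely determined, since the remaining $k-i$ slots of $S_1$ must all be filled with the element $1$ (as $S_1$ has $k$ elements total and $1$ must appear $k$ times overall, with $i$ copies of $1$ still needed in the singletons). Second, among the $n$ singletons $S_2,\dots,S_{n+1}$, exactly $i$ of them must equal $\{1\}$: choose their positions in $\binom{n}{i}$ ways. Third, the remaining $n-i$ singletons must be a bijection onto the $n-i$ elements of $\{2,\dots,n+1\}$ that did not go into $S_1$, giving $(n-i)!$ possibilities.

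Multiplying the three factors yields $\binom{n}{i}^2 (n-i)!$ fused permutations with the given $i$, and summing over $i$ produces the claimed formula. There is no real obstacle here: the only point that deserves a brief check is the range of $i$ and the verification that the choices in the three steps are genuinely independent and exhaust $S_{k,n}$, which is straightforward from the three defining conditions of a fused permutation.
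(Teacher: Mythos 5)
Your proof is correct and follows essentially the same route as the paper's: the paper counts diagrams by the number $i$ of edges joining the first top point to the last $n$ bottom points, which is exactly your statistic $\#\{a\in\{2,\dots,n+1\}: a\in S_1\}$, and the three independent choices ($\binom{n}{i}$ at top, $\binom{n}{i}$ at bottom, $(n-i)!$ for the remaining bijection) coincide. Your version just carries out the count in the multiset language rather than the diagrammatic one, and it is complete.
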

\begin{proof}
A basis of the algebra \( H_{k,n} \) is given by the diagrams, so it is sufficient to count the number of distinct diagrams.

Let \( i \in \{ 0, \ldots, \min(k,n) \} \) be the number of edges starting from the first point on top and arriving at the last \( n \) points. There must also be \( i \) edges connecting the first point at the bottom to the last \( n \) points at the top. This gives the choice of distributing these \( i \) edges among the \( n \) points, both at the top and the bottom. Finally, for the remaining \( n-i \) points, both at the top and the bottom, there is a choice for connecting them, namely \( (n-i)! \) choices. Thus, the number of diagrams for each \( i \), \( i = 0, \ldots, \min(k,n) \), leads to the result by summing over all \( i \).
\end{proof}
\subsection{Algebraic Definition}\label{ss22}
Recall that the algebra of the symmetric group \( \mathbb{C}\mathfrak{S}_n \) is a \( \mathbb{C} \)-algebra whose basis is indexed by the elements of \( \mathfrak{S}_n \). Moreover, there is a natural inclusion \( \mathbb{C}\mathfrak{S}_n \subset \mathbb{C}\mathfrak{S}_{n+1} \), where the elements of \( \mathfrak{S}_n \) (elements of the basis of \( \mathbb{C}\mathfrak{S}_n \)) are viewed in \( \mathfrak{S}_{n+1} \) as the same permutation on the first \( n \) elements, and where \( n+1 \) remains fixed.

We define an element, called the \textbf{symmetrizer} of \( \mathbb{C}\mathfrak{S}_k \), by:
\begin{equation}
P_{k}:=\frac{1}{k!}\sum_{\omega\in \mathfrak{S}_{k}}\omega \in \mathbb{C}\mathfrak{S}_{k}.
\end{equation}

This element is an idempotent of \( \mathbb{C}\mathfrak{S}_{k+n} \) (i.e., \( P_k^2 = P_k \)). Moreover, for \( i=1, \ldots, k-1 \), considering \( \delta_i \) as the transposition swapping \( i \) and \( i+1 \) and fixing the other points, we have \( \delta_i P_k = P_k \delta_i = P_k \).

\begin{Def}
We define the algebra:
\begin{equation}
\mathcal{H}_{k,n}:=P_k \cdot \mathbb{C}\mathfrak{S}_{k+n} \cdot P_k,
\end{equation}
where \( P_k \) is identified as an element of \( \mathbb{C}\mathfrak{S}_{k+n} \).
\end{Def}

\begin{Rem}
The algebra \( \mathcal{H}_{k,n} \) is a subalgebra of \( \mathbb{C}\mathfrak{S}_{k+n} \) with the unit \( P_k \) (not containing the unit of \( \mathbb{C}\mathfrak{S}_{k+n} \)).
\end{Rem}

\begin{Pro} \cite[Proposition 2.4]{CP}

The algebras \( \mathcal{H}_{k,n} \) and \( H_{k,n} \) are isomorphic.
\end{Pro}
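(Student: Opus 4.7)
The plan is to construct an explicit linear isomorphism $\phi : H_{k,n} \to \mathcal{H}_{k,n}$ and verify that it preserves the multiplication. For a fused permutation $d \in S_{k,n}$, one can always choose a lift $\sigma_d \in \mathfrak{S}_{k+n}$, i.e.\ a permutation whose diagram, after "fusing" the first $k$ top points into one ellipsoid point and the first $k$ bottom points into one ellipsoid point, recovers the diagram of $d$: it suffices to arbitrarily label the $k$ edges at each fused point. I then set $\phi(d) := P_k\,\sigma_d\,P_k$ and extend by linearity.

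The well-definedness rests on the property $\delta_i P_k = P_k \delta_i = P_k$ for $i = 1, \ldots, k-1$, which implies $\omega P_k = P_k \omega = P_k$ for any $\omega$ in the copy of $\mathfrak{S}_k$ sitting inside $\mathfrak{S}_{k+n}$. Any two lifts of the same fused permutation $d$ differ by left and right multiplication by elements of $\mathfrak{S}_k$, so they give the same element $P_k \sigma_d P_k$. To establish bijectivity, I would use the standard fact that $\{P_k \sigma P_k\}$, as $\sigma$ runs over a system of representatives of the double cosets $\mathfrak{S}_k \backslash \mathfrak{S}_{k+n} / \mathfrak{S}_k$, forms a basis of $\mathcal{H}_{k,n}$. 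The geometric description of these double cosets, namely a choice of connection of the $k+n$ top points to the $k+n$ bottom points where the $k$ edges at the first top point and the $k$ edges at the first bottom point are indistinguishable, is exactly the data of a fused permutation. Counting these double cosets recovers the formula \eqref{eq: dim H_{k,n}}, so the dimensions agree.

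The multiplicativity is the most delicate step, but the $1/k!$ normalization in the fused permutation product is tailor-made for the computation. Using $P_k^2 = P_k$, one expands
\begin{equation*}
\phi(d)\,\phi(d') \;=\; P_k \sigma_d P_k \sigma_{d'} P_k \;=\; \frac{1}{k!}\sum_{\omega \in \mathfrak{S}_k} P_k\,\sigma_d\,\omega\,\sigma_{d'}\,P_k.
\end{equation*}
Diagrammatically, stacking $\sigma_d$ on top of $\sigma_{d'}$ and inserting $\omega$ on the middle fused edges corresponds precisely to one of the $k!$ choices of connecting the arriving and leaving edges at the first middle point during the concatenation of $d$ and $d'$. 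Each summand $P_k \sigma_d \omega \sigma_{d'} P_k$ is the image under $\phi$ of the resulting fused permutation, so after averaging one recovers exactly $\phi(d\cdot d')$.

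The main obstacle, as I see it, is the combinatorial dictionary: one has to verify carefully that the double coset equivalence on $\mathfrak{S}_{k+n}$ matches the geometric equivalence of labelled diagrams defining fused permutations, and in particular that left multiplication by $\mathfrak{S}_k$ permutes the $k$ indistinguishable edges at one fused endpoint while right multiplication permutes those at the other. Once this dictionary is firmly in place, both the dimension count and the multiplication check follow directly from the identity $P_k^2 = P_k$ and the definition of the fused product.
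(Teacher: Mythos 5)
Your proof is correct and is essentially the same approach as the paper's: the paper does not prove this proposition itself but cites \cite{CP} and only records the isomorphism $\mathbf{H}\colon P_k\omega P_k \longmapsto [\omega]$, which is exactly the inverse of your map $\phi$. Your identification of fused permutations with the double cosets $\mathfrak{S}_k\backslash\mathfrak{S}_{k+n}/\mathfrak{S}_k$ (using that $P_k\alpha=\beta P_k=P_k$ for $\alpha,\beta\in\mathfrak{S}_k$, and that the $P_k\sigma P_k$ over double coset representatives form a basis of $P_k\mathbb{C}\mathfrak{S}_{k+n}P_k$), together with the expansion $P_k\sigma_d P_k\sigma_{d'}P_k=\frac{1}{k!}\sum_{\omega\in\mathfrak{S}_k}P_k\sigma_d\,\omega\,\sigma_{d'}P_k$ matching the $k!$ middle reconnections and the normalization in the fused product, is a sound and complete verification.
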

We just give here a description of the isomorphism. For $\omega\in \mathfrak{S}_{k+n}$, we see $\omega$ as a diagram made of two rows of $k+n$ dots connected by edges according to the permutation $\omega$ (the $i$-th bottom dot is connected to the $\omega(i)$-th top dot). The multiplication in $\mathfrak{S}_{k+n}$ is then simply the concatenation of diagrams.

Then, in the diagram of $\omega\in \mathfrak{S}_{k+n}$, in each of the two rows of dots, we glue the $k$ first dots into an ellipse. We denote $[\omega]$ the corresponding fused permutation in $S_{k,n}$ with its diagrammatic representation.

Then, we have the following map:
\begin{equation} \label{def H}
\begin{array}{cccc}
\mathbf{H}: & \mathcal{H}_{k,n} & \longrightarrow & H_{k,n}\\
& P_k\omega P_k & \longmapsto & [\omega]
\end{array}
\end{equation}
which is an isomorphism of algebras.

For example, let $n=2$ and $k=3$. Given $\omega=\left(\begin{array}{ccccc}
1 & 2 & 3 & 4 & 5\\
1 & 3 & 4 & 5 & 2 
\end{array}\right)$, its corresponding diagram is:
\begin{center}
\begin{tikzpicture}[scale=0.3]
\fill (21,2) ellipse (0.2cm and 0.2cm);\fill (21,-2) ellipse (0.2cm and 0.2cm);
\node at (19,2){$\omega(i)$};
\node at (19,-2){$i$};
\draw[thick] (21,2) ..controls+(0,-2) and +(0,+2).. (21,-2);
\fill (23,2) ellipse (0.2cm and 0.2cm);\fill (23,-2) ellipse (0.2cm and 0.2cm);
\draw[thick] (23,2) ..controls+(0,-2) and +(0,+2)..  (29,-2);
\fill (25,2) ellipse (0.2cm and 0.2cm);\fill (25,-2) ellipse (0.2cm and 0.2cm);
\draw[thick] (25,2) ..controls+(0,-2) and +(0,+2)..  (23,-2);
\fill (27,2) ellipse (0.2cm and 0.2cm);\fill (27,-2) ellipse (0.2cm and 0.2cm);
\draw[thick] (27,2) ..controls+(0,-2) and +(0,+2)..  (25,-2);
\fill (29,2) ellipse (0.2cm and 0.2cm);\fill (29,-2) ellipse (0.2cm and 0.2cm);
\draw[thick] (29,2) ..controls+(0,-2) and +(0,+2)..  (27,-2);
\node at (33,0){$\in \mathfrak{S}_{3+2}$.};
\end{tikzpicture}
\end{center}

Gluing the first three dots in each of the two rows, we obtain:
\begin{center}
\begin{tikzpicture}[scale=0.3]
\node at (15,0){$\mathbf{H}(P_k\omega P_k)=[\omega]=$};
\fill (21,2) ellipse (0.6cm and 0.2cm);\fill (21,-2) ellipse (0.6cm and 0.2cm);
\draw[thick] (20.7,2) ..controls+(0,-2) and +(0,+2).. (20.7,-2);
\draw[thick] (21,2) ..controls+(0,-2) and +(0,+2).. (21,-2);
\fill (23,2) ellipse (0.6cm and 0.2cm);\fill (23,-2) ellipse (0.6cm and 0.2cm);
\draw[thick] (21.3,2) ..controls+(0,-2) and +(0,+2)..  (25,-2);
\draw[thick] (23,2) ..controls+(0,-2) and +(0,+2)..  (21.3,-2);
\draw[thick] (25,2) ..controls+(0,-2) and +(0,+2)..  (23,-2);
\fill (25,2) ellipse (0.6cm and 0.2cm);\fill (25,-2) ellipse (0.6cm and 0.2cm);
\node at (28,0){$\in H_{3,2}$.};
\end{tikzpicture}
\end{center}
We remark that, for fixed $k$, the algebras $H_{k,n}$ form a chain of algebras as $n$ varies:
\[\mathbb{C} = H_{k,0} \subset H_{k,1} \subset H_{k,2} \subset \cdots \subset H_{k,n} \subset H_{k,n+1} \subset \cdots.\]
The inclusion is given by the natural inclusion of diagrams (adding a vertical edge to the right).
\paragraph{Semisimple algebras.} We refer to \cite{L} and \cite{GW} for more details on this paragraph and the following.

A finite-dimensional algebra \( \mathfrak{A} \) over the field \( \mathbb{C} \) is said to be \textbf{semisimple} if every finite-dimensional module over \( \mathfrak{A} \) (i.e., every finite-dimensional representation of \( \mathfrak{A} \)) can be decomposed as a direct sum of simple modules (i.e., irreducible representations).

To understand the representation theory of a finite-dimensional semisimple algebra over \( \mathbb{C} \), it is therefore sufficient to study its irreducible representations. Moreover, the structure of the algebra is encoded in the set of its irreducible representations. Indeed, we have the following theorem, which is a special case of the Artin–Wedderburn theorem, see \cite[\textsection{3.3}]{GW}:
let \( \mathfrak{A} \) be a finite-dimensional semisimple algebra over \( \mathbb{C} \). Then there exists an isomorphism of algebras:
\begin{equation}\label{AW formula}
\mathfrak{A} \cong \bigoplus_k \mathrm{End}(V_k),
\end{equation}
where the direct sum runs over a set of pairwise non-isomorphic irreducible representations \( V_k \) of \( \mathfrak{A} \). We note $Irr(\mathfrak{A})$ the set of representatives of the equivalence classes (under module isomorphism) of irreducible representations of \( \mathfrak{A} \).  The isomorphism is given naturally by sending $\mathfrak{a} \in \mathfrak{A}$ to the endomorphism in $End(V_k)$ corresponding to the action of $\mathfrak{a}$ on $V_k$.

Thus, a finite-dimensional semisimple algebra is a \textbf{multi-matrix algebra}, meaning it decomposes as a direct sum of matrix algebras. According to the theorem, every element \( \mathfrak{a} \in \mathfrak{A} \) can be viewed, via the isomorphism above, as a block-diagonal matrix of size \( \sum_k \dim(V_k)\); each diagonal block corresponds to a term in the direct sum and is the image of \( \mathfrak{a} \) under the projection onto \( \mathrm{End}(V_k) \).

\paragraph{Bratteli diagram of a chain of semisimple algebras.}

A \textbf{chain of algebras} is the data of $(\mathfrak{A}_n,\iota_n)_{n\geq 0}$ where, for $n\geq 0$, $\mathfrak{A}_n$ is an algebra and $\iota_n : \mathfrak{A}_n \hookrightarrow \mathfrak{A}_{n+1}$ is an injective morphism of algebras.

Consider \(\{ \mathfrak{A}_n \}_{n \geq 0}\), a chain of finite-dimensional semisimple algebras.

Given \(V_i\) an irreducible representation of \(\mathfrak{A}_{n+1}\), the restriction of \(V_i\) to \(\iota_n(\mathfrak{A}_n)\), denoted \(Res_{\mathfrak{A}_n}(V_i)\), decomposes, by semisimplicity of the algebra, as a direct sum of irreducible representations:
\[
Res_{\mathfrak{A}_n}(V_i) \cong \bigoplus_{i \in I} m_i W_i
\]
where \(I\) denotes a subset of \(Irr(\mathfrak{A}_n)\), the \(\lbrace W_i\rbrace_{i \in I}\) are pairwise non-isomorphic irreducible representations of \(\mathfrak{A}_n\), and the \((m_i)_{i \in I}\) are called multiplicities.

The \textbf{Bratteli diagram} of the chain of algebras \(\{ \mathfrak{A}_n \}_{n \geq 0}\) is the graph defined by:
\begin{enumerate}
\item[•] A set of vertices partitioned into subsets indexed by \(n \geq 0\). We call \(n\) the \textbf{level} of the diagram. The vertices at level \(n\) are indexed by \(Irr(\mathfrak{A}_n)\).
\item[•] The edges connect only vertices between adjacent levels. Let \(V\) be an irreducible representation of \(\mathfrak{A}_n\) and \(V'\) an irreducible representation of \(\mathfrak{A}_{n+1}\). Then there are \(m\) edges connecting the vertices indexed by \(V\) and \(V'\) if and only if \(V\) appears in the decomposition \(Res_{\mathfrak{A}_n}(V')\) with multiplicity \(m\).
\end{enumerate}
Graphically, all vertices at a given level are placed on a horizontal line, with the vertices at level \(n+1\) placed below those at level \(n\).

\paragraph{Artin-Wedderburn decomposition of $H_{k,n}$.}
Let $n\in \mathbb{Z}_{\geq 0}$ and $k\geq 1$.

We know that the algebra $H_{k,n}$ is semisimple and its irreducible representations were described in \cite{CP}. They are indexed by partitions $\lambda=(\lambda_1,\dots ,\lambda_p) \vdash k+n$ such that $\lambda_1\geq k$ (that is, the first line of $\lambda$ contains at least $k$ boxes). For example, for $n=0$, there is a single irreducible representation, indexed by a line of $k$ boxes. We set $Irr(H_{k,n}):=\lbrace \lambda=(\lambda_1,\ldots ,\lambda_p)\vdash k+n \mid \lambda_1\geq k\rbrace$ and denote its irreducible representations $U_\lambda$, $\lambda\in Irr(H_{k,n})$.

Then we have the following Artin-Wedderburn decomposition for $H_{k,n}$ (see \cite[Theorem 6.5]{CP}):
\begin{equation} \label{AFP decomposition}
H_{k,n}\simeq \bigoplus_{\lambda\in Irr(H_{k,n})}End(U_\lambda).
\end{equation}

The branching rules are given by inclusion of partitions. For example, when $k = 3$, the beginning of the Bratteli diagram is:
\begin{center}
 \begin{tikzpicture}[scale=0.22]
\mdiagp{-1}{0}{3};\node at (-2,-0.5) {$1$};
\draw (-0.5,-1.5) -- (-3,-3.5);\draw (1.5,-1.5) -- (3.5,-3.5);
\mdiag{-5}{-4}{4};\node at (-6,-4.5) {$1$};\mdiaggp{2}{-4}{3}{1};\node at
(1,-5) {$1$};

\draw (-5,-5.5) -- (-10.5,-8.5);\draw (-3,-5.5) -- (-3,-8.5);\draw
(1.7,-6.3) -- (-1,-8.5);\draw (3.5,-6.3) -- (3.5,-8.5);\draw (5,-5.5)
-- (9.5,-8.5);

\mdiag{-13}{-9}{5};\node at (-14,-9.5) {$1$};\mdiagg{-5}{-9}{4}{1};\node
at (-6,-10) {$2$};\mdiaggp{2}{-9}{3}{2};\node at (1,-10)
{$1$};\mdiagggp{8}{-9}{3}{1}{1};\node at (7,-10.5) {$1$};

\draw (-13,-10.5) -- (-22,-14.5);\draw (-10.5,-10.5) --
(-13.5,-14.5);\draw (-5.3,-11.3) -- (-12.5,-14.5);\draw (-4,-11.3) --
(-6,-14.5);\draw (-3,-10.5) -- (5,-14.5);
\draw (1.7,-11.3) -- (-4,-14.5);\draw (3,-11.3) -- (0.5,-14.5);\draw
(4.3,-11.3) -- (11.7,-14.5);\draw (7.7,-12.3) -- (7,-14.5);\draw
(9.3,-12.3) -- (13.5,-14.5);
\draw (11,-10.5) -- (19.5,-14.5);

\node at (-26,-15.5) {$1$};\mdiag{-25}{-15}{6};
\node at (-17,-16) {$3$};\mdiagg{-16}{-15}{5}{1};

\node at (-9,-16) {$3$};\mdiagg{-8}{-15}{4}{2};

\node at (-2,-16) {$1$};\mdiaggp{-1}{-15}{3}{3};

\node at (4,-16.5) {$3$};\mdiaggg{5}{-15}{4}{1}{1};\node at (11,-16.5)
{$2$};\mdiagggp{12}{-15}{3}{2}{1};\node at (17,-17)
{$1$};\mdiaggggp{18}{-15}{3}{1}{1}{1};


\node at (-32,-0.5) {$n=0$};\node at (-32,-4.5) {$n=1$};\node at
(-32,-9.5) {$n=2$};\node at (-32,-15.5) {$n=3$};
\end{tikzpicture}
\end{center}
We have shaded the three fixed boxes in the first row of each partition. Next to each partition is the dimension of the corresponding irreducible representation. We emphasize that the dimension is not the number of standard tableaux strictly speaking, but is the number of standard fillings of the non-shaded boxes by $1,\dots,n$.

We need to be more precise on the dimension of $U_\lambda$, $\lambda\in Irr(H_{k,n})$. 

Fix $p,l\in \mathbb{Z}_{\geq 0}$.

A sequence of non-negative integers $\nu = (\nu_1, \ldots , \nu_l)$ such that $\nu_1+\ldots+\nu_l =p$ is called a \textbf{composition} of $p$. We say that the \textbf{size} $\lvert \nu\rvert$ is equal to $p$. We make no difference between $\nu$ and the same sequence where we added some parts equal to $0$ at the end.

A Young tableau is called \textbf{semistandard} if the numbers are weakly ascending along rows and strictly ascending down columns of the Young diagram.

Let $n\in \mathbb{Z}_{\geq 0}$ and $k\geq 1$.

Let $\mathbf{t}$ be a Young tableau of size $k+n$ with, as entries, values in $\lbrace 1,2,\dots ,n+1\rbrace$. For $a=1,\ldots ,n+1$, let $\nu_a$ be the number of times the integer $a$ appears in the tableau $\mathbf{t}$. The sequence $\nu=(\nu_1,\ldots ,\nu_{n+1})$ forms a \textbf{composition} of $k+n$. We say that $\mathbf{t}$ is a tableau of \textbf{weight} $\nu$. 

Given $\lambda\vdash k+n$, we set:
\begin{equation*}
SSTab(\lambda,k)=\lbrace \text{semistandard Young tableaux of shape}~\lambda~\text{and weight}~(k,1,\ldots ,1)\rbrace.
\end{equation*}
Then we have the following equality (\cite[Theorem 6.5]{CP}):
\begin{equation*}
\dim(U_\lambda)=\lvert SSTab(\lambda,k)\rvert.
\end{equation*}

For instance, for $n=2$ and $k=3$, there are exactly two semistandard tableaux of weight $(3,1,1)$:
\[\begin{ytableau}
1 & 1 & 1 & 2\\
3
\end{ytableau}\qquad \text{and} \qquad \begin{ytableau}
1 & 1 & 1 & 3\\
2
\end{ytableau}\]
Thus, for $\lambda=(4,1)\vdash 3+2$, we have $\dim(U_\lambda)=2$.
\section{Cyclotomic degenerate affine Hecke algebra}
We refer to \cite{K} or \cite{HM} for this section for more details.
\subsection{Definitions and properties}

Let \( n \in \mathbb{Z}_{\geq 0}\). Fix a pair \( \kappa = (\kappa_1,\kappa_2) \in \mathbb{C}^2 \).

\begin{Def} \label{AHDC}
The \textbf{cyclotomic degenerate affine Hecke algebra} with cyclotomic parameter \( \kappa \) is the unital associative algebra \( \hat{\mathcal{H}}_n^{\kappa} \) generated by \( x_1,x_2,\ldots,x_n \) (Jucys–Murphy elements) and \( s_1,\ldots,s_{n-1} \) (Coxeter elements), subject to the following relations:
\begin{equation} \label{Relation1}
(x_1-\kappa_1)(x_1-\kappa_2)=0, 
\end{equation}
\begin{equation}\label{R1}
x_rs_{\ell}=s_{\ell}x_r \quad \text{for} \quad r\neq \ell,\ell+1,
\end{equation}
\begin{equation}\label{R_2}
x_rx_t = x_tx_r, \quad r,t=1,\ldots,n,
\end{equation}
\begin{equation}\label{eq:HC}
x_{r+1} = s_r x_r s_r + s_r, \quad r=1,\ldots,n-1, 
\end{equation}
\begin{equation}\label{S_1}
s_k^2=1 \quad \text{for} \quad k=1,\ldots,n-1,
\end{equation}
\begin{equation}\label{S_2}
s_i s_j = s_j s_i \quad \text{if} \quad |i-j| > 1,
\end{equation}
\begin{equation} \label{Relation2}
s_k s_{k+1} s_k = s_{k+1} s_k s_{k+1} \quad \text{for} \quad k=1,\ldots,n-2.
\end{equation}
\end{Def}
By convention, we have $\hat{\mathcal{H}}_0^{\kappa}=\mathbb{C}$.

One sees that relation (\ref{eq:HC}) shows that the algebra is generated by \( x_1, s_1, \ldots, s_{n-1} \), but we keep the generators \( x_2, \ldots, x_n \) for convenience. Another presentation of this algebra  will be given later with only these generators and less defining relations.

Obviously, the algebra does not depend on the order of the pair \(\kappa\).
\paragraph{Basic properties of $\hat{\mathcal{H}}_n^{\kappa}$.}
We start by giving another presentation of $\hat{\mathcal{H}}_n^{\kappa}$ which will be useful for the last section.
\begin{Pro} \cite[Proposition 7.8.1]{K}

The algebra \( \hat{\mathcal{H}}_n^{\kappa} \) is generated by \( x_1\) and \( s_1,\ldots,s_{n-1} \), subject only to the following relations:
\begin{equation} \label{eq: DR1}
(x_1-\kappa_1)(x_1-\kappa_2)=0, 
\end{equation}
\begin{equation} \label{eq: DR2}
x_1s_{\ell}=s_{\ell}x_1 \quad (2\leq \ell \leq n-1),
\end{equation}
\begin{equation} \label{eq: DR3}
x_1(s_1x_1s_1+s_1)=(s_1x_1s_1+s_1)x_1,
\end{equation}
\begin{equation} \label{eq: DR4}
s_k^2=1, \quad
s_k s_m = s_m s_k, \quad 
s_k s_{k+1} s_k = s_{k+1} s_k s_{k+1},
\end{equation}
for all admissible $k,m$ with $\lvert k-m\rvert >1$.
\end{Pro}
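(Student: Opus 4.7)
The plan is to identify $\hat{\mathcal{H}}_n^{\kappa}$ with the algebra $\tilde{H}$ presented by generators $x_1, s_1, \ldots, s_{n-1}$ subject only to \eqref{eq: DR1}--\eqref{eq: DR4}, by constructing mutually inverse algebra homomorphisms. One direction is automatic: inside $\hat{\mathcal{H}}_n^{\kappa}$, relation \eqref{eq: DR2} is a specialisation of \eqref{R1}; relation \eqref{eq: DR3}, after substituting $s_1 x_1 s_1 + s_1 = x_2$ via \eqref{eq:HC}, is the commutation $x_1 x_2 = x_2 x_1$ from \eqref{R_2}; and \eqref{eq: DR1}, \eqref{eq: DR4} are literally \eqref{Relation1} and \eqref{S_1}--\eqref{Relation2}. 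This yields a surjection $\pi : \tilde{H} \twoheadrightarrow \hat{\mathcal{H}}_n^{\kappa}$ sending generators to generators.

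For the reverse map, I would define $x_2, \ldots, x_n \in \tilde{H}$ by the recursion $x_{r+1} := s_r x_r s_r + s_r$, so that \eqref{eq:HC} holds by construction. Since \eqref{Relation1} is \eqref{eq: DR1} and \eqref{S_1}--\eqref{Relation2} are \eqref{eq: DR4}, it remains only to verify \eqref{R1} and \eqref{R_2} in $\tilde{H}$; this will produce a morphism $\hat{\mathcal{H}}_n^{\kappa} \to \tilde{H}$ inverse to $\pi$ on the generators $x_1, s_1, \ldots, s_{n-1}$.

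To prove \eqref{R1}, namely $x_r s_\ell = s_\ell x_r$ whenever $\ell \notin \{r-1, r\}$, I proceed by induction on $r$. The base case $r = 1$ is precisely \eqref{eq: DR2}. For the inductive step, expanding $x_{r+1} = s_r x_r s_r + s_r$ and commuting $s_\ell$ past $s_r$ and $x_r$ (using \eqref{eq: DR4} and the inductive hypothesis) immediately settles the cases $|\ell - r| \geq 2$. The delicate case is $\ell = r-1$: here I would rewrite $x_r = s_{r-1} x_{r-1} s_{r-1} + s_{r-1}$ and use the braid relation $s_r s_{r-1} s_r = s_{r-1} s_r s_{r-1}$ together with the inductive commutation $x_{r-1} s_r = s_r x_{r-1}$ to show directly that $s_{r-1} x_{r+1} = x_{r+1} s_{r-1}$.

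The commutativity \eqref{R_2}, $x_a x_b = x_b x_a$, I would prove by induction on $N := \max(a, b)$, with the base case $N = 2$ being exactly \eqref{eq: DR3}. In the inductive step from $N$ to $N+1$ (so $N \geq 2$), taking $b = N+1$ and $a \leq N$, the sub-case $a \leq N-1$ is handled by writing $x_{N+1} = s_N x_N s_N + s_N$ and using \eqref{R1} (to move $s_N$ past $x_a$) together with the inductive hypothesis $x_a x_N = x_N x_a$. The main obstacle, and the technical heart of the proof, is the remaining sub-case $a = N$. For this, I would substitute $x_N = s_{N-1} x_{N-1} s_{N-1} + s_{N-1}$, apply $x_{N+1} s_{N-1} = s_{N-1} x_{N+1}$ (provided by \eqref{R1}) to push $s_{N-1}$ across $x_{N+1}$, and invoke the sub-case $a = N-1$ just established within the same inductive step (giving $x_{N-1} x_{N+1} = x_{N+1} x_{N-1}$), so as to reduce both $x_N x_{N+1}$ and $x_{N+1} x_N$ to the common expression $s_{N-1} x_{N+1} x_{N-1} s_{N-1} + s_{N-1} x_{N+1}$.
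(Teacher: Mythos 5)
Your proof is correct. Note that the paper itself gives no proof of this proposition — it simply cites \cite[Proposition 7.8.1]{K} — and your argument (two mutually inverse homomorphisms, with the only real work being the inductive verification of relations \eqref{R1} and \eqref{R_2} for the recursively defined $x_{r+1}:=s_rx_rs_r+s_r$) is the standard one; the delicate computations you isolate, namely $\ell=r-1$ for \eqref{R1} and $a=N$ for \eqref{R_2}, do go through exactly as you describe, using the braid relation together with $s_r^2=1$ in the first case and the previously settled sub-case $a=N-1$ in the second.
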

For \( \alpha := (\alpha_1, \alpha_2, \ldots, \alpha_n) \in (\mathbb{Z}_{\geq 0})^n \), define \( x^\alpha := x_1^{\alpha_1} x_2^{\alpha_2} \ldots x_n^{\alpha_n} \).

Recall that \( \mathfrak{S}_n \) is the symmetric group on \( n \) elements. For \( 1 \leq r < n \), let \( \delta_r := (r, r+1) \) be the transposition swapping \( r \) and \( r+1 \), and fixing the other points. Then \( \{\delta_1,\ldots,\delta_{n-1}\} \) is the standard set of Coxeter generators for \( \mathfrak{S}_n \). A \textbf{reduced expression} for an element \( \omega \in \mathfrak{S}_n \) is a minimal-length word \( \omega = \delta_{r_1} \ldots \delta_{r_k} \), with \( 1 \leq r_j < n \) and \( 1 \leq j \leq k \). If \( \omega = \delta_{r_1} \ldots \delta_{r_k} \) is reduced, define \( s_\omega := s_{r_1} \ldots s_{r_k} \). Then \( s_\omega \) is independent of the chosen reduced expression by Matsumoto’s lemma (cf. \cite{GP}, Theorem 1.2.2), since the braid relations are satisfied in \( \hat{\mathcal{H}}_n^{\kappa} \).

A remarkable result is the fact that the set:
\begin{equation} \label{base}
\left\{ x^\alpha s_\omega \mid \alpha \in \{0,1\}^n, \omega \in \mathfrak{S}_n \right\}
\end{equation}
is a basis for \( \hat{\mathcal{H}}_n^{\kappa} \) (cf. (\cite[Theorem 7.5.6]{K}).

In particular, \( \hat{\mathcal{H}}_n^{\kappa} \) has dimension:  
\begin{equation}
\label{dimension}
\dim(\hat{\mathcal{H}}_n^{\kappa})=2^n n!.
\end{equation}

For \(n\ge2\), we naturally identify \(\hat{\mathcal{H}}_{n-1}^\kappa\) with the subalgebra of \(\hat{\mathcal{H}}_n^\kappa\) generated by \(x_1,\dots,x_{n-1}\) and \(s_1,\dots,s_{n-2}\). This is possible thanks, for example, to the knowledge of the basis \eqref{base}.

Then, for fixed $k$, the algebras $\hat{\mathcal{H}}_n^\kappa$ form a chain of algebras as $n$ varies:
\[\mathbb{C} = \hat{\mathcal{H}}_0^{\kappa} \subset \hat{\mathcal{H}}_1^{\kappa} \subset \hat{\mathcal{H}}_2^{\kappa} \subset \cdots \subset \hat{\mathcal{H}}_n^{\kappa} \subset \hat{\mathcal{H}}_{n+1}^{\kappa} \subset \cdots.\]

Furthermore, we can see that
\( \mathbb{C}\mathfrak{S}_n \) is both a subalgebra and a quotient of \( \hat{\mathcal{H}}_n^{\kappa} \). Indeed, by (\ref{base}), the basis elements with \( \alpha = (0,\ldots,0) \) form a basis of a subalgebra isomorphic to \( \mathbb{C}\mathfrak{S}_n \).

Now define the map:
\begin{equation} \label{pi}
\begin{array}{cccc}
\pi: & \hat{\mathcal{H}}_n^{\kappa} & \longrightarrow & \mathbb{C}\mathfrak{S}_n \\
& s_i & \longmapsto & \delta_i \\
& x_1 & \longmapsto & 0
\end{array}
\end{equation}
for \( i = 1, \ldots, n-1 \).

This is a morphism of algebras (the relations are clearly satisfied in \( \mathbb{C}\mathfrak{S}_n \)). Surjectivity is clear since elementary transpositions generate the group algebra of the symmetric group, hence the quotient.

\begin{Rem}
The morphism \( \pi \) in (\ref{pi}) maps \( x_k \) to \( J_k := \sum_{i=1}^{k-1} (i,k) \), for \( k = 2, \ldots, n \), which are the Jucys–Murphy elements of \( \mathbb{C}\mathfrak{S}_n \), known for their elegant properties (see \cite{VO} or \cite{M}).
\end{Rem}

The algebra $\hat{\mathcal{H}}_n^{\kappa}$ is called a cyclotomic Hecke algebra of level $2$ since the relation \eqref{eq: DR1} is a quadratic characteristic relation for $x_1$. For what is recalled below about $\hat{\mathcal{H}}_n^{\kappa}$, see \emph{e.g.} \cite{K}.

We will see that the algebra $\hat{\mathcal{H}}_n^{\kappa}$ has a basis labelled by the elements of the Coxeter group of type $B_n$. We will abuse notations and denote $B_n$ this Coxeter group. Its elements can be viewed as signed permutations, that is, those permutations $\omega$ on the set $\{-n,-n+1,\dots,-1,1,2,\dots,n\}$ such that $\omega(-i)=-\omega(i)$ for all $i \in \{1,2,\dots,n\}$. We can represent the elements of $B_n$ by words $b=b_1b_2\dots b_n$ where each of the numbers $1,2,\dots,n$ appears once and is possibly barred (see \textit{e.g.}\ \cite{JS}). In this representation, $b_i$ is the image of $i$ by $\omega$, where the bar notation is understood as a negative sign. The group $B_n$ contains $n!2^n$ elements. 

We denote by $\tau_i$ the transposition of $i$ and $i+1$ in $B_n$ (which thus also transposes $-i$ and $-(i+1)$), and by $\tau_0$ the transposition of $-1$ and $1$. The group $B_n$ is generated by $\tau_i$ with $i=0,\dots,n-1$ subject only to the following relations (see \emph{e.g.} \cite{GP}):
\begin{equation}\label{R1 Bn}
	\tau_i^2=1,
\end{equation}
\begin{equation}\label{R2 Bn}
	\tau_k\tau_{k+1}\tau_k=\tau_{k+1}\tau_k\tau_{k+1},
\quad
	\tau_i\tau_j=\tau_j\tau_i,
\quad
	\tau_0\tau_1\tau_0\tau_1=\tau_1\tau_0\tau_1\tau_0,
\end{equation}
for all admissible $k=1,\dots ,n-1$, $i,j=0,\ldots ,n-1$ such that $\lvert i-j\rvert >1$.

The following product of sets (where the product of two sets $A\cdot B$ is understood as $A\cdot B=\lbrace ab\mid a\in A,b\in B\rbrace$) provides a normal form for the elements of the group $B_n$ (\cite[Part I]{JS}):
\begin{equation}\label{group set}
\left\{\begin{array}{c} 
1,\\ \tau_0
\end{array}\right\} 
\cdot 
\left\{\begin{array}{c} 
1,\\ \tau_1, \\ \tau_1\tau_0, \\ \tau_1\tau_0\tau_1 \end{array}\right\}
\cdot 
\left\{\begin{array}{c} 
1,\\ \tau_2, \\ \tau_2\tau_1, \\ \tau_2\tau_1\tau_0, \\ \tau_2\tau_1\tau_0\tau_1,\\ \tau_2\tau_1\tau_0\tau_1\tau_2\\ \end{array}\right\}
\cdot\ \ldots\ \cdot 
\left\{\begin{array}{c} 1,\\ \tau_{n-1}, \\ \vdots \\ \tau_{n-1}\dots \tau_1\tau_0,\\ \tau_{n-1}\dots \tau_1\tau_0\tau_1, \\ \vdots \\ \tau_{n-1}\dots \tau_1\tau_0\tau_1\dots \tau_{n-1} \end{array}\right\}. 
\end{equation} 
For an element $\omega\in B_n$ written in its normal form \eqref{group set} $\omega=\tau_{i_1}\tau_{i_2}\dots \tau_{i_k}$, $k\geq 0$, and denoting $s_0:=x_1$, we set $s_{\omega}=s_{i_1}\dots s_{i_k}$ in $\hat{\mathcal{H}}_n^{\kappa}$.

We introduce the following notation for \( 0\leq m \leq n \):
\[
[n,m] = s_n \dots s_{m+1} s_m,\quad [n,0]=s_n\dots s_1x_1, \quad\quad [n,-m] = s_n \dots s_1 x_1 s_1 \dots s_m.
\]
We consider the following product of sets:

\begin{equation}\label{basis set}
\left\{\begin{array}{c} 
1,\\ x_1
\end{array}\right\} 
\cdot 
\left\{\begin{array}{c} 
1,\\ s_1, \\ s_1x_1, \\ s_1x_1s_1 \end{array}\right\}
\cdot 
\left\{\begin{array}{c} 
1,\\ s_2, \\ s_2s_1, \\ s_2s_1x_1, \\ s_2s_1x_1s_1, \\ s_2s_1x_1s_1s_2 \end{array}\right\}
\cdot\ \ldots\ \cdot 
\left\{\begin{array}{c} 1,\\ s_{n-1}, \\ \vdots \\ s_{n-1}\dots s_1x_1,\\ s_{n-1}\dots s_1x_1s_1, \\ \vdots \\ s_{n-1}\dots s_1x_1s_1\dots s_{n-1} \end{array}\right\}. 
\end{equation} 
One sees that the elements of \eqref{basis set} are exactly the $s_\omega$ for $\omega\in B_n$ written in its normal form \eqref{group set}.

Then, the elements appearing in the above product are of the form:
\begin{equation}\label{Standard form}
[n_1,m_1]\dots [n_k,m_k], ~\text{with} ~ 0 \leq n_1 < n_2 < \dots < n_k \leq n-1, ~ \text{and} ~ |m_i| \leq n_i.
\end{equation}
\begin{Pro} \label{standard}
The elements of \eqref{Standard form} form a basis of the algebra \( \hat{\mathcal{H}}_n^\kappa \).
\end{Pro}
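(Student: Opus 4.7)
The strategy is to combine a dimension count with a spanning argument. First, observe that the set in \eqref{basis set} is a Cartesian product of $n$ formal factors of sizes $2,4,6,\dots,2n$, so it contains at most $\prod_{i=0}^{n-1}(2i+2) = 2^n n!$ distinct elements in $\hat{\mathcal{H}}_n^\kappa$. Since $\dim\hat{\mathcal{H}}_n^\kappa = 2^n n!$ by \eqref{dimension}, it will suffice to prove that this set spans the algebra: spanning, together with the matching cardinality bound, will force both the distinctness of the formal products and their linear independence.

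To establish spanning, the plan is to induct on $n$. Let $V_n$ denote the linear span of \eqref{basis set}, and let $T_n$ denote its last (i.e., $n$-th) factor, so that $V_n = V_{n-1}\cdot T_n$ as linear spans. By the inductive hypothesis $V_{n-1} = \hat{\mathcal{H}}_{n-1}^\kappa$, one obtains $V_n = \hat{\mathcal{H}}_{n-1}^\kappa\cdot T_n$. Since $V_n$ contains $1$ and every generator of $\hat{\mathcal{H}}_n^\kappa$ (indeed $x_1,s_1,\dots,s_{n-2}$ lie in $\hat{\mathcal{H}}_{n-1}^\kappa\subset V_n$, and $s_{n-1}=[n-1,n-1]\in T_n\subset V_n$), it is enough to show that $V_n$ is closed under right multiplication by each generator. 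By the left $\hat{\mathcal{H}}_{n-1}^\kappa$-module structure of $V_n$, this reduces to checking the inclusion
\begin{equation*}
T_n\cdot g\ \subset\ \hat{\mathcal{H}}_{n-1}^\kappa\cdot T_n\qquad\text{for every }g\in\{x_1,s_1,\dots,s_{n-1}\}.
\end{equation*}

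These inclusions are verified by a finite case analysis: there are $2n$ elements in $T_n$, each of the form $s_{n-1}\cdots s_m$, $s_{n-1}\cdots s_1 x_1 s_1\cdots s_m$, or $1$, and each of the $n$ generators must be treated. The decisive computational ingredient is the identity obtained by iterating \eqref{eq:HC}:
\begin{equation*}
s_{r-1}s_{r-2}\cdots s_1 x_1 s_1\cdots s_{r-1}\ =\ x_r\ -\ \sum_{k=1}^{r-1}s_{r-1}s_{r-2}\cdots s_k s_{k+1}\cdots s_{r-1},
\end{equation*}
which exhibits each ``conjugated $x_1$'' factor appearing in $T_n$ as $x_r$ modulo an element of $\mathbb{C}\mathfrak{S}_n\subset\hat{\mathcal{H}}_{n-1}^\kappa\cdot T_n$. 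Combined with the braid and commutation relations \eqref{R1}, \eqref{S_1}, \eqref{S_2}, \eqref{Relation2}, this identity allows any product $t\cdot g$ (with $t\in T_n$) to be transformed into a linear combination in $\hat{\mathcal{H}}_{n-1}^\kappa\cdot T_n$ by moving $\hat{\mathcal{H}}_{n-1}^\kappa$-material to the left and reducing the remaining tail to an element of $T_n$.

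The main obstacle is the subcase of right multiplication by $s_{n-1}$ (and by $s_{n-2}$) applied to the conjugated elements $[n-1,-m]$: the braid relation \eqref{Relation2} and the non-trivial commutation $x_n s_{n-1} = s_{n-1}x_{n-1}+1$ produce middle-of-word rearrangements that must be tracked carefully. However, each rearrangement step either strictly decreases the number of occurrences of $s_{n-1}$ inside the current factor or lowers the $x_1$-degree, so the reduction terminates inside $\hat{\mathcal{H}}_{n-1}^\kappa\cdot T_n$, which closes the induction and the proof.
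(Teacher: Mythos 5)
Your proposal is correct and follows essentially the same route as the paper: an induction on $n$ showing that $\hat{\mathcal{H}}_{n-1}^{\kappa}\cdot T_n$ is stable under right multiplication by the generators (which is exactly the paper's lemma that every element is a combination of $\omega[n-1,j]$ with $\omega\in\hat{\mathcal{H}}_{n-1}^{\kappa}$), followed by the cardinality count $2^n n!$ against \eqref{dimension}. The only cosmetic difference is the identity you invoke for the $x_1$-conjugates (the expansion of $x_r$ via \eqref{eq:HC}) where the paper instead uses $(s_1x_1)^2=(x_1s_1)^2+x_1s_1-s_1x_1$; both are routine consequences of the defining relations and serve the same purpose in the case analysis.
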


\begin{proof}
We show that these elements span the vector space \( \hat{\mathcal{H}}_n^\kappa \).

To do this, we need the following lemma:
\begin{Lem}
Every element of \( \hat{\mathcal{H}}_n^\kappa \) can be written as a linear combination of elements of the form \( \omega [n-1,j] \), where \( \omega \in \hat{\mathcal{H}}_{n-1}^\kappa \) and \( |j| \leq n-1 \).
\end{Lem}

\begin{proof}
We verify that multiplication of the elements $\omega [n-1,j]$ on the right by generators of the algebra $x_1,s_1,\ldots ,s_{n-1}$ gives a linear combination of elements of the same form.

Let $p\in \lbrace 1,\ldots n-1\rbrace$ and $j\in \mathbb{N}$ such that $j\leq n-1$.

We consider different cases:

\begin{enumerate}
\item[•] \underline{Multiplication by \( s_p \)}: We use braid and commutation relations.
\begin{enumerate}
\item \textit{Case \( p > j+1 \)}: We have:
\begin{equation*}
\omega [n-1,-j]s_p = \omega s_{n-1} \ldots s_{p}s_{p-1}s_{p} \ldots s_1 x_1 s_1 \ldots s_j = \underbrace{\omega s_{p-1}}_{\in \hat{\mathcal{H}}_{n-1}^\kappa} [n-1,-j],
\end{equation*}
and similarly, we have:
\begin{align*}
\omega [n-1,j]s_p &= \underbrace{\omega s_{p-1}}_{\in \hat{\mathcal{H}}_{n-1}^\kappa} [n-1,j].
\end{align*}

\item \textit{Case \( p = j+1 \)}: We have:
\[
\omega [n-1,-j]s_{j+1} = \omega [n-1,-(j+1)], \quad 
\omega [n-1,j]s_{j+1} = \underbrace{\omega s_j}_{\in \hat{\mathcal{H}}_{n-1}^\kappa} [n-1,j].
\]

\item \textit{Case \( p = j \)}: We obtain:
\[
\omega [n-1,-j]s_j = \omega [n-1,-(j-1)], \quad 
\omega [n-1,j]s_j = \omega [n-1,j+1].
\]

\item \textit{Case \( p < j \)}: We obtain:
\begin{align*}
\omega [n-1,-j]s_p &= \omega s_{n-1}\dots s_1 x_1 s_1\dots s_p s_{p+1}s_p \dots s_j \\
&= \omega s_{n-1}\dots s_{p+1} s_p s_{p+1}\dots s_1 x_1 s_1\dots s_j \\
&= \underbrace{\omega s_p}_{\in \hat{\mathcal{H}}_{n-1}^\kappa} [n-1,-j],
\end{align*}
and finally, we have:
\[
\omega [n-1,j]s_p = 
\begin{cases}
\omega [n-1,j-1] & \text{if } p = j-1, \\
\underbrace{\omega s_p}_{\in \hat{\mathcal{H}}_n^\kappa} [n-1,j] & \text{otherwise}.
\end{cases}
\]
\end{enumerate}

\item[•] \underline{Multiplication by \( x_1 \)}:
For \( j = 0 \), since $x_1^2=(\kappa_1+\kappa_2)x_1-\kappa_1\kappa_2$ from (\ref{eq: DR1}), we have: 
\begin{align*}
\omega [n-1,0]x_1 &= \omega [n-1,1]x_1^2\\
&= (\kappa_1+\kappa_2)\omega[n-1,0]-\kappa_1\kappa_2\omega [n-1,1].
\end{align*}

Using the relations of the algebra, one sees:
\begin{equation}\label{(x_1s_1)^2}
(s_1 x_1)^2 = (x_1 s_1)^2 + x_1 s_1 - s_1 x_1.
\end{equation}
Then, from (\ref{(x_1s_1)^2}), we have, for $j\geq 1$:
\begin{align*}
\omega[n-1,-j]x_1&=\omega(s_{n-1}\dots s_2s_1x_1s_1x_1s_2\dots s_j)\\&=\omega (s_{n-1}\ldots s_2x_1s_1x_1s_1\ldots s_j+s_{n-1}\ldots s_2x_1s_1\ldots s_j\\ &-s_{n-1}\ldots s_1x_1s_2\ldots s_j).
\end{align*}
We can rewrite the elements on the right side of the last equality as:
\[
\omega s_{n-1}\ldots s_2 x_1 s_1 x_1 s_1 \ldots s_j = \underbrace{\omega x_1}_{\in \hat{\mathcal{H}}_{n-1}^\kappa} [n-1,-j],
\]
\[
\omega s_{n-1}\ldots s_2 x_1 s_1 \ldots s_j = \underbrace{\omega x_1 s_1 \ldots s_{j-1}}_{\in \hat{\mathcal{H}}_{n-1}^\kappa} [n-1,1],
\]
\[
\omega s_{n-1} \ldots s_1 x_1 s_2 \ldots s_j = \underbrace{\omega s_1\ldots s_{j-1}}_{\in \hat{\mathcal{H}}_{n-1}^{\kappa}} [n-1,0].
\]

Lastly,
\[
\omega [n-1,j]x_1 =
\begin{cases}
\omega [n-1,0] & \text{if } j=1, \\
\underbrace{\omega x_1}_{\in \hat{\mathcal{H}}_{n-1}^\kappa} [n-1,j] & \text{otherwise}.
\end{cases}
\]
\end{enumerate}
\end{proof}

Using the lemma and a straightforward induction, the elements described span \( \hat{\mathcal{H}}_n^\kappa \). Moreover, for each \( k \in \{1,\ldots,n\} \), the set 
\[
\left\{\begin{array}{c} 
1,\\ s_{k-1}, \\ \vdots \\ s_{k-1} \dots s_1 x_1,\\ s_{k-1}\dots s_1 x_1 s_1, \\ \vdots \\ s_{k-1}\dots s_1 x_1 s_1 \dots s_{k-1} 
\end{array}\right\}
\]
has cardinality \( 2k \), so the product over all \( k \in \{1,\ldots,n\} \) yields a set of cardinality \( 2^n n! \) which is equal to the dimension of $\hat{\mathcal{H}}_n^{\kappa}$ by (\ref{dimension}). Therefore, this set forms a basis for the space.
\end{proof}
Thus, the product of sets \eqref{basis set} forms a basis of the algebra $\hat{\mathcal{H}}_n^{\kappa}$ and then the algebra $\hat{\mathcal{H}}_n^{\kappa}$ has a basis labelled by elements of the Coxeter group of type $B_n$, namely the set $\lbrace s_\omega\rbrace_{\omega\in B_n}$.

\begin{Rem} \label{Rem braid}
The element $s_\omega\in \hat{\mathcal{H}}_n^{\kappa}$ depends on the reduced form of $\omega\in B_n$. Indeed, for instance, in $B_n$, $n\geq 2$, we have $\omega=(\tau_0\tau_1)^2=(\tau_1\tau_0)^2$ but, in $\hat{\mathcal{H}}_n^{\kappa}$, if $\omega$ is written in its standard form \eqref{group set}, then, using relation \eqref{eq: DR3}, we have: 
\[s_\omega=(x_1s_1)^2=(s_1x_1)^2+s_1x_1-x_1s_1,\]
which is not equal to $(s_1x_1)^2$.

Thus, the braid relation $(x_1s_1)^2=(s_1x_1)^2$ is not satisfied in $\hat{\mathcal{H}}_n^{\kappa}$ and then the assumptions of Matsumoto's lemma are not satisfied (cf. \cite[Theorem 1.2.2]{GP}).
\end{Rem}
\subsection{Representations of the algebra \( \hat{\mathcal{H}}_n^{\kappa} \)} 
\paragraph{Bipartitions and (standard) tableaux.}

A \textbf{partition} of a non-negative integer $d$ is a non-increasing sequence of positive integers $\lambda=(\lambda_1,\ldots ,\lambda_p)$ such that $\lvert \lambda\rvert =\lambda_1+\ldots +\lambda_p=d$. We do not distinguish between a partition and the same partition extended by adding zero components, i.e., $(\lambda_1,\ldots ,\lambda_p)=(\lambda_1,\ldots ,\lambda_p,0,\ldots ,0)$. Let $\mathcal{P}_d$ denote the set of all partitions of $d$.

We identify partitions with their \textbf{Young diagrams}: the Young diagram of $\lambda$ is a left-justified array of rows of boxes such that the $j$-th row (we count from top to bottom) contains  $\lambda_j$ boxes. The number of non-empty rows is the length $\ell(\lambda)$ of $\lambda$. By convention, the empty set $\emptyset$ is the only partition of $n=0$.

A \textbf{bipartition} of a non-negative integer $n$ is a pair $\boldsymbol{\lambda}=(\lambda^{(1)},\lambda^{(2)})$ of partitions such that $\lvert \lambda^{(1)}\rvert +\lvert \lambda^{(2)}\rvert=n$. We identify a bipartition $\boldsymbol{\lambda}$ with its \textbf{diagram}, which is the set of nodes
\[
[\![\boldsymbol{\lambda}]\!]:=\lbrace (l,r,c)\mid 1\leq c\leq \lambda_r^{(l)},~l=1,2\rbrace,
\]
that can be visualized as a pair of Young diagrams. For example, for $\boldsymbol{\lambda}=((3,1,1), (2,2,1))$, a bipartition of $10$, we have:
\[
[\![\boldsymbol{\lambda}]\!]=\left(
\begin{ytableau}
       ~ & ~ & ~\\
       ~ \\
       ~ 
\end{ytableau}\quad ,\quad\begin{ytableau}
~ & ~\\
~ & ~\\
~~
\end{ytableau}
\right)
\]

Here, $l$, $r$, and $c$ denote the \textbf{component}, \textbf{row}, and \textbf{column} respectively.

The set of bipartitions of $n$ is partially ordered via:
\begin{equation}
\boldsymbol{\lambda}\unrhd\boldsymbol{\mu}\Longleftrightarrow \sum_{k=1}^{l-1}\lvert \lambda^{(k)}\rvert+\sum_{j=1}^i\lambda_j^{(l)}\geq \sum_{k=1}^{l-1}\lvert \mu^{(k)}\rvert +\sum_{j=1}^i\mu_j^{(l)}, \quad l=1,2,\quad i\geq 1.
\end{equation}
If $\boldsymbol{\lambda}\unrhd\boldsymbol{\mu}$ and $\boldsymbol{\lambda}\neq\boldsymbol{\mu}$, we write $\boldsymbol{\lambda}\rhd\boldsymbol{\mu}$.

Let $Par_2(n)$ denote the set of bipartitions of $n$.

Fix $\boldsymbol{\lambda}\in Par_2(n)$. A \textbf{$\boldsymbol{\lambda}$-tableau} is a bijection $t: [\![\boldsymbol{\lambda}]\!] \longrightarrow \lbrace 1,2,\ldots ,n\rbrace$, identifying $[\![\boldsymbol{\lambda}]\!]$ with its set of nodes. For example,
\begin{equation} \label{tableaux}
\left(
\begin{ytableau}
       6 & 7 & 8\\
       9 \\
       10
\end{ytableau}\quad , \quad\begin{ytableau}
1 & 2\\
3 & 4\\
5
\end{ytableau}
\right)\quad \text{and}\quad \left(
\begin{ytableau}
       1 & 3 & 4\\
       6 \\
       9
\end{ytableau}\quad , \quad\begin{ytableau}
5 & 2\\
7 & 8\\
10
\end{ytableau}
\right)
\end{equation}
are two $\boldsymbol{\lambda}$-tableaux for $\boldsymbol{\lambda}=((3,1,1), (2,2,1))$.

A $\boldsymbol{\lambda}$-tableau is called \textbf{standard} if its entries strictly increase along each row and column in each component. For example, in (\ref{tableaux}), the first tableau is standard, but the second is not.

Let $Std(\boldsymbol{\lambda})$ denote the set of standard $\boldsymbol{\lambda}$-tableaux, and let $Std(Par_2(n)):=\bigcup_{\boldsymbol{\lambda}\in Par_2(n)}Std(\boldsymbol{\lambda})$.

If $\mathfrak{t}$ is a $\boldsymbol{\lambda}$-tableau, then let $\mathtt{Forme}(\mathfrak{t})=\boldsymbol{\lambda}$, and define $\mathfrak{t}_{\downarrow m}$ as the subtableau containing the integers $\lbrace 1,2,\ldots ,m\rbrace$. If $\mathfrak{t}$ is standard, then $\mathtt{Forme}(\mathfrak{t}_{\downarrow m})$ is a bipartition for all $m\geq 0$.

We extend the dominance order on bipartitions to standard tableaux:
\begin{equation}
\mathfrak{s}\unrhd \mathfrak{t} \Longleftrightarrow \mathtt{Forme}(\mathfrak{s}_{\downarrow m})\unrhd \mathtt{Forme}(\mathfrak{t}_{\downarrow m})\quad \forall~1\leq m\leq n.
\end{equation}
Similarly, $\mathfrak{s}\rhd \mathfrak{t}$ if $\mathfrak{s}\unrhd \mathfrak{t}$ and $\mathfrak{s} \neq \mathfrak{t}$.

For example,
\[
\mathfrak{s}=\left(\begin{ytableau}
1 & 3 & 4\\
2 & 5\\
6
\end{ytableau} \quad , \quad  \begin{ytableau}
7 & 8\\
9
\end{ytableau}\right) \quad \text{and} \quad \mathfrak{t}=\left(\begin{ytableau}
1 & 2 \\
3 & 4
\end{ytableau}\quad , \quad \begin{ytableau}
5 & 6\\
7 & 8\\
9
\end{ytableau}\right)
\]
are not comparable because:
\[
\mathtt{Forme}(\mathfrak{t}_{\downarrow 2})=\left(\begin{ytableau}
~ & 
\end{ytableau}\quad , \quad \emptyset \right)\rhd \left(\begin{ytableau}
~\\
~
\end{ytableau}\quad , \quad \emptyset \right)=\mathtt{Forme}(\mathfrak{s}_{\downarrow 2}),
\]
but
\[
\mathtt{Forme}(\mathfrak{s}_{\downarrow 4})=\left(\begin{ytableau}
~ & ~& ~\\
~
\end{ytableau}\quad , \quad \emptyset \right)\rhd \left(\begin{ytableau}
~ & ~\\
~ & ~
\end{ytableau}\quad , \quad \emptyset \right)=\mathtt{Forme}(\mathfrak{t}_{\downarrow 4}).
\]

Given $\kappa=(\kappa_1,\kappa_2)\in \mathbb{C}^2$, for a $\boldsymbol{\lambda}$-tableau $\mathfrak{s}$, we define its $i$-th \textbf{content} as $c_i^{\kappa}(\mathfrak{s})=c_i(\mathfrak{s})=\kappa_l+c-r$, where $t(l,c,r)=i$ for $i=1,\ldots ,n$. 

The \textbf{axial distance} between $i$ and $i+1$ in $\mathfrak{s}$ is defined as $\rho_i(\mathfrak{s})=c_i(\mathfrak{s})-c_{i+1}(\mathfrak{s})$.

\paragraph{Artin-Wedderburn decomposition of $\hat{\mathcal{H}}_n^{\kappa}$.}
We know, from \cite[Theorem 3.13]{HM}, that the algebra \( \hat{\mathcal{H}}_n^{\kappa} \) is semisimple if and only if: 
\begin{equation}\label{SS H_n}
\kappa_1-\kappa_2\in \mathbb{C}\setminus \lbrace 0,\pm 1,\pm 2,\dots ,\pm (n-1)\rbrace.
\end{equation}

In this case, the set of irreducible representations of $\hat{\mathcal{H}}_n^{\kappa}$ is indexed by the bipartitions of size $n$, and we will denote $W_{\boldsymbol{\lambda}}$ the irreducible representations indexed by $\boldsymbol{\lambda}\in Par_2(n)$ so that:
\[Irr(\hat{\mathcal{H}}_n^{\kappa})=\{W_{\boldsymbol{\lambda}}\ |\ \boldsymbol{\lambda}\in Par_2(n)\}\ .\]
In this case, we have the following Artin-Wedderburn decomposition :
\begin{equation}\label{AW decomposition}
\hat{\mathcal{H}}_n^{\kappa}\cong \bigoplus_{\boldsymbol{\lambda}\in Par_2(n)}End(W_{\boldsymbol{\lambda}}).
\end{equation}
Furthemore, we have the dimension of each of these irreducible representations:
\begin{equation}\label{dim W}
\dim(W_{\boldsymbol{\lambda}})=\lvert Std(\boldsymbol{\lambda})\rvert.
\end{equation}

Let us describe how the generators act on $W_{\boldsymbol{\lambda}}$.

Fix $\boldsymbol{\lambda}\in Par_2(n)$ and let \(W_{\boldsymbol{\lambda}}\) be the vector space with a basis \(\lbrace v_{\mathfrak{t}}\rbrace_{\mathfrak{t}\in Std(\boldsymbol{\lambda})}\) indexed by standard $\boldsymbol{\lambda}$-tableaux.

The following formulas for the generators of $\hat{\mathcal{H}}_n^{\kappa}$ define an irreducible representation of the algebra on the space $W_{\boldsymbol{\lambda}}$:
\begin{align} 
x_k(v_{\mathfrak{t}})&=c_k(\mathfrak{t})v_{\mathfrak{t}}, \quad k=1,\ldots ,n \label{JM action}\\
s_r(v_{\mathfrak{t}})&=\alpha_r(\mathfrak{t})v_{s_r(\mathfrak{t})}-\frac{1}{\rho_r(\mathfrak{t})}v_{\mathfrak{t}},\quad r=1,\ldots ,n-1,
\end{align}
where $s_r(\mathfrak{t})$ is the $\boldsymbol{\lambda}$-tableau (not necessarily standard) obtained from $\mathfrak{t}$ by exchanging $r$ and $r+1$ and leaving the others fixed, and for \( \mathfrak{t} \in \mathrm{Std}(\mathrm{Par}_2(n)) \), we define:
\[
\alpha_r(\mathfrak{t}) = \left\{
\begin{array}{lll}
1 & \text{if } s_r(\mathfrak{t}) \text{ is standard and } \mathfrak{t} \triangleright s_r(\mathfrak{t}), \\
\frac{(1 + \rho_r(\mathfrak{t}))(1 + \rho_r(s_r(\mathfrak{t})))}{\rho_r(\mathfrak{t})\rho_r(s_r(\mathfrak{t}))} & \text{if } s_r(\mathfrak{t}) \text{ is standard and } s_r(\mathfrak{t}) \triangleright \mathfrak{t}, \\
0 & \text{otherwise},
\end{array}
\right.
\]
for \( 1 \leq r < n \). If $s_r(\mathfrak{t})$ is not standard, then we set $v_{s_r(\mathfrak{t})}:=0$.

As \( \boldsymbol{\lambda} \) runs over the set of bipartitions of \( n \), the representations \( W_{\boldsymbol{\lambda}} \) are pairwise non-isomorphic and exhaust the set of irreducible representations of \( \hat{\mathcal{H}}_n^{\kappa} \) (see \cite{HM}, \textsection{3}).

\begin{Rem}
The basis \(\lbrace v_{\mathfrak{t}}\rbrace_{\mathfrak{t}\in Std(\boldsymbol{\lambda})}\) of the irreducible module $W_{\boldsymbol{\lambda}}$ depends on the set $\lbrace\alpha_r(\mathfrak{t})\rbrace_{\mathfrak{t}\in Std(\boldsymbol{\lambda)})}$, called the seminormal system of coefficients associated with this basis (see \cite{HM}, \textsection{3} for more details).
\end{Rem}
The branching rules expressing the restriction from $\hat{\mathcal{H}}_n^{\kappa}$ to $\hat{\mathcal{H}}_{n-1}^{\kappa}$ are given by inclusion of bipartitions (or more precisely, of their Young diagrams), as shown in the beginning of the Bratteli graph below.
For example, for \(k \geq 2\), we have the following Bratteli diagram of the chain $\lbrace \hat{\mathcal{H}}_n^{\kappa}\rbrace_{n\geq 0}$ for the first four levels (\emph{e.g.} \cite{K} or \cite{HM}):
\begin{center}
\begin{tikzpicture}[scale=0.15]
\node at (0,0) {$(\emptyset,\emptyset)$};
\node at (8,0) {$n=0$};

\draw[thick] (-0.5,-1.5) -- (-5.5,-3.5);
\draw[thick] (0.5,-1.5) -- (5.5,-3.5);

\node at (-8,-5) {$($};\diag{-7.5}{-4.5}{1};\node at (-4.7,-5) {$, \emptyset)$};
\node at (5,-5) {$(\emptyset ,$};\diag{6.5}{-4.5}{1};\node at (8,-5) {$)$};
\node at(13,-5) {$n=1$};

\draw[thick] (-7.5,-6.5) -- (-17.5,-9.5);
\draw[thick] (-6,-6.5) -- (-8.5,-9.5);
\draw[thick] (-4.5,-6.5) -- (-1.5,-9.5);
\draw[thick] (4.5,-6.5) -- (1.5,-9.5);
\draw[thick] (6,-6.5) -- (8.5,-9.5);
\draw[thick] (7.5,-6.5) -- (17.5,-9.5);

\node at (-20,-11) {$($};\diag{-19.5}{-10.5}{2};\node at (-15.7,-11) {$, \emptyset)$};
\node at (-10,-11) {$($};\diagg{-9.5}{-10.5}{1}{1};\node at (-6.7,-11) {$,\emptyset)$};
\node at (-2,-11) {$($};\diag{-1.5}{-10.5}{1};\node at (0,-11) {$,$};\diag{0.5}{-10.5}{1};\node at (2,-11) {$)$};
\node at (7,-11) {$(\emptyset ,$};\diag{8.5}{-10.5}{2};\node at (11,-11) {$)$};
\node at (16.7,-11) {$(\emptyset ,$};\diagg{18.5}{-10.5}{1}{1};\node at (20,-11) {$)$};
\node at (25,-11) {$n=2$};

\draw[thick] (-19.5,-12.5) -- (-36.5,-15.5);
\draw[thick] (-17.5,-12.5) -- (-29,-15.5);
\draw[thick] (-15.5,-12.5) -- (-13,-15.5);

\draw[thick] (-10.2,-12.7) -- (-26,-15.5);
\draw[thick] (-7.6,-12.8) -- (-19.5,-15.5);
\draw[thick] (-6.5,-12.5) -- (-5.5,-15.5);

\draw[thick] (-1.5,-12.5) -- (-11,-15.5);
\draw[thick] (-0.5,-12.5) -- (-4,-15.5);
\draw[thick] (0.5,-12.5) -- (2,-15.5);
\draw[thick] (1.5,-12.5) -- (10,-15.5);

\draw[thick] (6.5,-12.5) -- (3.5,-15.5);
\draw[thick] (8.5,-12.5) -- (17,-15.5);
\draw[thick] (10.5,-12.5) -- (25,-15.5);

\draw[thick] (17,-12.5) -- (12,-15.5);
\draw[thick] (17.7,-12.8) -- (27,-15.5);
\draw[thick] (20.1,-12.7) -- (35,-15.5);

\node at (-39,-17) {$($};\diag{-38.5}{-16.5}{3};\node at (-33.7,-17) {$,\emptyset)$};
\node at (-30,-17) {$($};\diagg{-29.5}{-16.5}{2}{1};\node at (-25.7,-17) {$,\emptyset)$};
\node at (-22,-17) {$($};\diaggg{-21.5}{-16.5}{1}{1}{1};\node at (-18.7,-17) {$,\emptyset)$};

\node at (-15,-17) {$($};\diag{-14.5}{-16.5}{2};\node at (-12,-17) {$,$};\diag{-11.5}{-16.5}{1};\node at (-10,-17) {$)$};
\node at (-7,-17) {$($};\diagg{-6.5}{-16.5}{1}{1};\node at (-5,-17) {$,$};\diag{-4.5}{-16.5}{1};\node at (-3,-17) {$)$};

\node at (1,-17) {$($};\diag{1.5}{-16.5}{1};\node at (3,-17) {$,$};\diag{3.5}{-16.5}{2};\node at (6,-17) {$)$};
\node at (9,-17) {$($};\diag{9.5}{-16.5}{1};\node at (11,-17) {$,$};\diagg{11.5}{-16.5}{1}{1};\node at (13,-17) {$)$};

\node at (16.7,-17) {$(\emptyset ,$};\diag{18.5}{-16.5}{3};\node at (22,-17) {$)$};
\node at (25.7,-17) {$(\emptyset ,$};\diagg{27.5}{-16.5}{2}{1};\node at (30,-17) {$)$};
\node at (33.7,-17) {$(\emptyset ,$};\diaggg{35.5}{-16.5}{1}{1}{1};\node at (37,-17) {$)$};
\node at (42,-17) {$n=3$};
\end{tikzpicture}
\end{center}
\subsection{Idempotents of the algebra \( \hat{\mathcal{H}}_n^{\kappa} \)} 
\paragraph{Complete set of central orthogonal idempotents.}

We refer to \cite[Chapter 17, \textsection{4}]{L} for more details on the idempotents of a semisimple algebra.

Let \( \mathfrak{A} \) be a unital semisimple algebra over \(\mathbb{C}\). Recall that \( e \in \mathfrak{A} \) is an \textbf{idempotent} if and only if \( e^2 = e \). If \( e_1, e_2 \in \mathfrak{A} \) are idempotents, then we say that they are \textbf{orthogonal} if and only if 
\[
e_1 e_2 = e_2 e_1 = 0.
\]
Furthermore, an idempotent \( e \in \mathfrak{A} \) is called \textbf{primitive} if \( e \) cannot be written as a sum 
\[
e = e_1 + e_2
\]
where both \( e_1, e_2 \in \mathfrak{A} \) are nonzero idempotents. An idempotent is called \textbf{trivial} if it is either \( 0 \) or \( 1 \).

Assume that we have a decomposition such that
\[
\mathfrak{A} = M_1 \oplus M_2
\]
as (left) modules and \( 1 = m_1 + m_2 \) for some \( m_1 \in M_1 \), \( m_2 \in M_2 \). Then \( m_1 \), \( m_2 \) are orthogonal idempotents in \( \mathfrak{A} \), and
\[
M_1 = \mathfrak{A}m_1, \quad M_2 = \mathfrak{A}m_2.
\]

Furthermore, \( M_i \) is irreducible (as a module) if and only if \( m_i \) is primitive. This can be easily generalized to any finite number of summands.

If \( \mathfrak{A} \) is additionally finite-dimensional, then
\[
\mathfrak{A} = P_1 \oplus \cdots \oplus P_n
\]
for some irreducible modules \( P_i \). It follows from the preceding that
\[
P_i = \mathfrak{A}e_i
\]
for some \( e_i \in \mathfrak{A} \) and \( \{e_1, \ldots, e_n\} \) is a set of pairwise orthogonal, primitive idempotents.

A finite set of idempotents is called \textbf{complete} if they sum to $1$.

The set \( \{e_1, \ldots, e_n\} \) is called a \textbf{complete set of primitive orthogonal idempotents} of \( \mathfrak{A}\).

Let $e$ be a non-zero central idempotent (\emph{i.e.} $e^2=e$ and $e$ lies in the center of $\mathfrak{A}$). We say $e$ is \textbf{centrally primitive} if we cannot write e as a sum of two non-zero orthogonal central idempotents. A central idempotent $e$ can be centrally primitive without being primitive, \emph{i.e.} $e$ can be written as a sum of two non-zero orthogonal idempotents, but neither of these is central.

A \textbf{complete set of centrally primitive orthogonal idempotents} in $\mathfrak{A}$ is a set of elements $\lbrace e_1,e_2,\dots ,e_p\rbrace$, $p\geq 0$ such that that they are centrally primitive, they sum to $1$ and they satisfy $e_{i}e_{j} = \delta_{i,j} e_{i}$.

Furthermore, a complete set of centrally primitive orthogonal idempotents in $\mathfrak{A}$ is unique in the sense that if $1 = e_1 + \ldots + e_r = f_1 + \ldots + f_s$ where each of \(e_1, \ldots, e_r\) and \(f_1, \ldots, f_s\) is a set of centrally primitive central idempotents which are orthogonal, then $r=s$ and there is a permutation $\sigma$ of $\lbrace 1,…,r\rbrace$ such that $e_i = f_{\sigma(i)}$ for all $i$.

In the particular case where $\mathfrak{A}$ is semisimple, we can describe the complete set of centrally primitive orthogonal idempotents via the Artin-Wedderburn decomposition (cf. (\ref{AW formula})).

Indeed, let $S$ be an indexing set for a complete set of pairwise non-isomorphic irreducible representations of $\mathfrak{A}$.

Let $\lambda \in S$. We define $E_{\lambda}$ as the element of $\mathfrak{A}$ corresponding under the Artin–Wedderburn decomposition of $\mathfrak{A}$ (cf. (\ref{AW formula})) to $Id_{V_{\lambda}}$ in the component corresponding to $\lambda$ and $0$ in all other components. 
The set $\lbrace E_{\lambda}\rbrace_{\lambda\in S}$ is the complete set of centrally primitive orthogonal idempotents of $\mathfrak{A}$.
In any representation $W$ of $\mathfrak{A}$, the action of $E_{\lambda}$ projects onto the isotopic component of $W$ corresponding to $\lambda$. More precisely, if the decomposition of $W$ into irreducible is

\[
W \cong \bigoplus_{\lambda' \in S} V_{\lambda '}^{\oplus m_{\lambda'}},
\]
then the action of \( E_\lambda \) is the projection onto \( V_{\lambda}^{\oplus m_\lambda} \) corresponding to this decomposition, that is:
\[
E_\lambda= \mathrm{Id}_{V_{\lambda}}^{\oplus m_\lambda} \quad \text{on} \quad V_{\lambda}^{\oplus m_{\lambda}}\quad \text{and} \quad E_\lambda \left( V_{\lambda '}^{\oplus m_{\lambda'}} \right) = 0 \quad \text{for } \lambda' \ne \lambda.
\]

\paragraph{Ideals and quotients.}

From the decomposition (\ref{AW formula}), one immediately sees that the ideals (and equivalently, the quotients) of \( \mathfrak{A} \) correspond to subsets \( S' \subset S \) in the following way:
\[
I_{S'} := \bigoplus_{\lambda \in S'} \mathrm{End}(V_\lambda), \quad \text{and} \quad \mathfrak{A}/ I_{S'} \cong \bigoplus_{\lambda \in S \setminus S'} \mathrm{End}(V_\lambda).
\]
One set of generators of the ideal \( I_{S'} \) consists of the elements \( E_\lambda \) with \( \lambda \in S' \).

\paragraph{Case of $\hat{\mathcal{H}}_n^{\kappa}$.}
Fix $n\in \mathbb{Z}_{\geq 1}$.
For the rest of the section, we assume that we are in the semisimple case for the algebra $\hat{\mathcal{H}}_n^{\kappa}$, \emph{i.e.} we consider  the condition \eqref{SS H_n} on the complex numbers $\kappa_1$ and $\kappa_2$, namely:
\[\kappa_1-\kappa_2\in \mathbb{C}\setminus \lbrace 0,\pm 1,\dots ,\pm (n-1)\rbrace.\]

We denote
\[
\mathcal{C}_n := \{\, c_i(\mathfrak{t}) \mid \mathfrak{t}\in Std(Par_2(n)),\;1\le i\le n\}
\]
the set of all contents appearing in a standard tableau of size \(n\).

For \(\mathfrak{t}\in Std(Par_2(n))\), we define the element of \(\hat{\mathcal{H}}_n^{\kappa}\):
\begin{equation}\label{idempotent}
F_{\mathfrak{t}}
=
\prod_{k=1}^n \prod_{\substack{c\in\mathcal{C}_n\\c\ne c_k(\mathfrak{t})}}
\frac{x_k - c}{c_k(\mathfrak{t}) - c}.
\end{equation}

From the action \eqref{JM action} of the Jucys-Murphy elements $x_i$, $i=1,\ldots ,n$, on a basis $\lbrace v_{\mathfrak{t}}\rbrace_{\mathfrak{t}\in Std(\boldsymbol{\lambda})}$ of the irreducible module $W_{\boldsymbol{\lambda}}$ of $\hat{\mathcal{H}}_n^{\kappa}$ for $\boldsymbol{\lambda}\in Par_2(n)$, we can view via the Artin-Wedderburn decomposition \eqref{AW decomposition} $x_i$ as a diagonal matrix with diagonal entries which are elements of $\mathcal{C}_n$.

Then, the definition \eqref{idempotent} provides via the Artin-Wedderburn decomposition, for each \(\mathfrak{t}\in Std(Par_2(n))\), an elementary matrix $E_i$, where $E_i$ is the diagonal matrix of size $\sum_{\boldsymbol{\lambda}\in Par_2(n)}\dim (W_{\boldsymbol{\lambda}})$ with the coefficient $1$ on the $i$-th line, 0 orthewise. The index $i$ corresponds to the standard tableau $\mathfrak{t}$.
Moreover, two different standard tableaux provide two different elementary diagonal matrices.

Thus, the isomorphism \eqref{AW decomposition} induces the following bijection of sets:
\begin{equation*}
\lbrace F_\mathfrak{t} \mid \mathfrak{t}\in Std(Par_2(n)) \rbrace \longleftrightarrow \left\lbrace E_i \mid i=1,2,\dots, \sum_{\boldsymbol{\lambda}\in Par_2(n)}\dim (W_{\boldsymbol{\lambda}})\right\rbrace .
\end{equation*}
The set $\lbrace F_{\mathfrak{t}} \mid \mathfrak{t}\in Std(Par_2(n))\rbrace$ is then a complete set of primitive orthogonal idempotents of $\hat{\mathcal{H}}_n^{\kappa}$.

The previous discussion allows us to deduce the following useful properties of Jucys-Murphy elements of $\hat{\mathcal{H}}_n^{\kappa}$.

We recall that for \(n\ge2\), we naturally identify \(\hat{\mathcal{H}}_{n-1}^\kappa\) with the subalgebra of \(\hat{\mathcal{H}}_n^\kappa\) generated by \(x_1,\dots,x_{n-1}\) and \(s_1,\dots,s_{n-2}\).

If \(\mathfrak{s}\in Std(Par_2(n-1))\), then in \(\hat{\mathcal{H}}_n^\kappa\):
\[
F_{\mathfrak{s}} = \sum_{\mathfrak{s}\to\mathfrak{t}} F_{\mathfrak{t}},
\]
where \(\mathfrak{s}\to\mathfrak{t}\) means \(\mathfrak{t}\) is obtained from $\mathfrak{s}$ by adding the box labeled \(n\).

Furthermore, by the Jucys–Murphy action one has for all \(\mathfrak{t}\in Std(Par_2(n))\) and \(r=2,\dots,n\):
\[
x_rF_{\mathfrak{t}} = F_{\mathfrak{t}}x_r = c_r(\mathfrak{t})F_{\mathfrak{t}}.
\]
In particular,
\[
x_n = \sum_{\boldsymbol{\lambda}\in Par_2(n)}
\sum_{\mathrm{Forme}(\mathfrak{t})=\boldsymbol{\lambda}}
c_n(\mathfrak{t})\,F_{\mathfrak{t}}=\sum_{\mathfrak{t}\in Std(Par_2(n))}c_n(\mathfrak{t})F_{\mathfrak{t}}.
\]
By convention, \(F_{(\emptyset,\emptyset)}:=1\), and the \(F_{\mathfrak{t}}\) can be built recursively: if \(\mathfrak{s}\) is obtained by removing the box containing \(n\) from \(\mathfrak{t}\), then
\begin{equation}\label{rec formula}
F_{\mathfrak{t}} = 
F_{\mathfrak{s}} \cdot \frac{(x_n - \alpha_1)\cdots(x_n - \alpha_k)}
{(c-\alpha_1)\cdots(c-\alpha_k)},
\end{equation}
where \(\alpha_1,\dots,\alpha_k\) are the contents of all addable boxes of \(\mathrm{Forme}(\mathfrak{s})\) except the one of \(\mathfrak{t}\), whose content is \(c\).

For \(\boldsymbol{\lambda}\in Par_2(n)\), let \(\mathfrak{t}^{\boldsymbol{\lambda}}\) be the standard tableau whose entries \(1,\dots,n\) fill the diagram in Western reading order.  
For example, when \(\boldsymbol{\lambda}=((3,2,1),(2,2))\),
\[
\mathfrak{t}^{\boldsymbol{\lambda}} = 
\left(
\begin{ytableau}
1 & 2 & 3\\
4 & 5\\
6
\end{ytableau}\quad ,\quad
\begin{ytableau}
7 & 8\\
9 & 10
\end{ytableau}
\right).
\]

The elements defined in (\ref{idempotent}) for the standard tableaux of the form $\mathfrak{t}^{\boldsymbol{\lambda}}$, $\boldsymbol{\lambda}\in Par_2(n)$, are particularly interesting and have been studied in \cite[Theorem 3.13]{HM}. 

Indeed, a complete set of pairwise non-isomorphic irreducible representations of $\hat{\mathcal{H}}_n^{\kappa}$ is given by the $\lbrace \hat{\mathcal{H}}_n^{\kappa}F_{\mathfrak{t}^{\boldsymbol{\lambda}}}\rbrace_{\boldsymbol{\lambda}\in Par_2(n)}$.

We deduce from (\ref{AW decomposition}) that we have the following decomposition:
\begin{equation}
\hat{\mathcal{H}}_n^{\kappa}\cong \bigoplus_{\boldsymbol{\lambda}\in Par_2(n)}End(\hat{\mathcal{H}}_n^{\kappa}F_{\mathfrak{t}^{\boldsymbol{\lambda}}}),
\end{equation}
where \(\dim(\hat{\mathcal{H}}_n^{\kappa}F_{\mathfrak{t}^{\boldsymbol{\lambda}}})=\lvert Std(\boldsymbol{\lambda})\rvert\) and the module isomorphism \(W_{\boldsymbol{\lambda}}\cong \hat{\mathcal{H}}_n^{\kappa}F_{\mathfrak{t}^{\boldsymbol{\lambda}}}\).

We define the four one-dimensional representations of $\hat{\mathcal{H}}^{(\kappa_1,\kappa_2)}_n$ associated with the following bipartitions:
\begin{equation}\label{1dimrep}\begin{array}{lllllll}
(\Box\dots\Box\,,\,\emptyset) &\quad& (\begin{array}{l}\Box\\[-0.8em] \vdots\\[-0.4em] \Box\end{array}\,,\,\emptyset) & \quad &
(\emptyset\,,\,\Box\dots\Box) &\quad& (\emptyset\,,\,\begin{array}{l}\Box\\[-0.8em] \vdots\\[-0.4em] \Box\end{array})
\\
\ \ x_1\mapsto \kappa_1 && x_1\mapsto \kappa_1 && \ \ x_1\mapsto \kappa_2 && x_1\mapsto \kappa_2 \\
\ \ s_i\mapsto 1        && s_i\mapsto -1 && \ \ s_i\mapsto 1 && s_i\mapsto -1\\
\end{array}
\end{equation}
Let $\lambda\vdash n$.
A \textbf{standard tableau} of shape $\lambda$  is a bijective filling of the boxes of $\lambda$ by numbers $1,\dots,n$ such that the entries strictly increase along any row and down 
 any column of the diagram. We denote by $d_{\lambda}$ the number of standard tableaux of shape $\lambda$. From the representation theory of the symmetric group, or from the Robinson--Schensted correspondence (see \emph{e.g.} \cite{SR}), we have:
 \begin{equation}\label{dimfactn}
 \sum_{\lambda\vdash n}d_{\lambda}^2=n!\ .
 \end{equation}

Recall that a standard tableau of shape $(\delta,\mu)$, for $(\delta,\mu)$ a bipartition of $n$, is a bijective filling of the boxes of $\delta$ and $\mu$ by the numbers $1,\dots,n$ such that the entries strictly increase along any row and down 
 any column of the two diagrams. The number of standard tableaux of shape $(\delta,\mu)$ is easily seen to be:
 \begin{equation}\label{dimbipart}
 d_{\delta,\mu}=\binom{n}{|\delta|}d_{\delta} d_{\mu}\ .
 \end{equation}

Thus, we recover from the equality \eqref{dimbipart} the four primitive central idempotents associated with the one-dimensionnal representations defined in (\ref{1dimrep}).

We denote the partition $(1,\ldots ,1)\in \mathcal{P}_n$ by $(1^n)$.

We are particularly interested in \(F_{\mathfrak{t}^{\boldsymbol{\lambda}}}\) for  
\[
\boldsymbol{\lambda}\in \{\,((n),\emptyset),\,((1^n),\emptyset),\,(\emptyset,(1^n)),\,(\emptyset,(n))\}.
\]

For these bipartitions, we fix:
\begin{equation} \label{PCI}
F_n^{(\alpha_1,\alpha_2)}:=F_{\mathfrak{t}^{\boldsymbol{\lambda}}},
\end{equation}
where $\alpha_1$ (resp. $\alpha_2$) is the image of $s_i$ for $i=1,\ldots ,n$ (resp. the image of $x_1$) by the one-dimensional representation associated with $\boldsymbol{\lambda}$ defined in (\ref{1dimrep}). In particular, we have that $\alpha_1\in \lbrace -1,1\rbrace$ and $\alpha_2\in \lbrace \kappa_1,\kappa_2\rbrace$.

We recall that the following products form a basis of \(\hat{\mathcal{H}}_n^\kappa\) (cf.\ Proposition \ref{standard}):
\begin{equation*}
\left\{\begin{array}{c} 
1,\\ x_1
\end{array}\right\} 
\cdot 
\left\{\begin{array}{c} 
1,\\ s_1, \\ s_1x_1, \\ s_1x_1s_1 \end{array}\right\}
\cdot 
\left\{\begin{array}{c} 
1,\\ s_2, \\ s_2s_1, \\ s_2s_1x_1, \\ s_2s_1x_1s_1, \\ s_2s_1x_1s_1s_2 \end{array}\right\}
\cdot\ \ldots\ \cdot 
\left\{\begin{array}{c} 1,\\ s_{n-1}, \\ \vdots \\ s_{n-1}\dots s_1x_1,\\ s_{n-1}\dots s_1x_1s_1, \\ \vdots \\ s_{n-1}\dots s_1x_1s_1\dots s_{n-1} \end{array}\right\}. 
\end{equation*} 
We index these sets by \(0,\dots,n-1\). For example, \(s_2s_1x_1\) belongs to the second set, and \(s_{n-1}\) to the \((n-1)\)‑th set.

We define \(\ell(\omega)\) as the \textbf{length} of \(\omega\), and \(\ell_0(\omega)\) as the number of occurrences of the generator \(x_1\) in a reduced expression.

Given an element $\omega \in B_n$, we define $m_i(\omega) = 1$ if $x_1$ appears in the $i$-th set when $\omega$ is written in its standard form, and $m_i(\omega)=0$ otherwise.

In what follows, it will be convenient to consider the indices of $\kappa_1$, $\kappa_2$ modulo $2$, so that $\kappa_{b+1} = \kappa_1$ when $b = 2$.

We recall that we are in the conditions such that the algebra $\hat{\mathcal{H}}^{\kappa}_n$ is semisimple, namely:
\begin{equation*}
\kappa_1-\kappa_2\in \mathbb{C}\setminus \lbrace 0,\pm 1,\pm 2,\dots ,\pm (n-1)\rbrace.
\end{equation*}
\begin{Pro} \label{idempotent 1}
For $b=1,2$ and $\alpha=\pm 1$, we have, in \(\hat{\mathcal{H}}_n^\kappa\):
\begin{equation} \label{idempotent formula}
F_n^{(\alpha,\kappa_b)}=D_n(\alpha,\kappa_b)
\sum_{\omega\in B_n}
\alpha^{\ell(\omega)}\left(\prod_{i=0}^{n-1}(i-\alpha \kappa_{b+1})^{1-m_i(\omega)}\right)s_\omega,
\end{equation}
where:
\begin{equation*}
	D_n(\alpha,\kappa_b)=\left(n!\prod_{i=0}^{n-1}(i+\alpha(\kappa_b-\kappa_{b+1}))\right)^{-1}.
\end{equation*}
\end{Pro}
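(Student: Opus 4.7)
My plan is to verify that $G_n$, the right-hand side of \eqref{idempotent formula}, equals $F_n^{(\alpha,\kappa_b)}$ by showing that $G_n$ lies in the one-dimensional two-sided ideal associated with the character $\chi:(s_i,x_1)\mapsto(\alpha,\kappa_b)$ and that $\chi(G_n)=1$. A standard Artin-Wedderburn argument shows that the right ideal $R_\chi := \{G : aG = \chi(a) G \text{ for all } a \in \hat{\mathcal{H}}_n^\kappa\}$ equals $\mathbb{C} F_n^{(\alpha,\kappa_b)}$, so these two conditions pin down $G_n$ uniquely.

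A crucial preliminary is that the coefficient in \eqref{idempotent formula} factors across the blocks of the normal form \eqref{basis set}, yielding
\[
G_n = D_n\, A_0 A_1 \cdots A_{n-1}, \qquad A_i := \sum_{u\text{ in block }i} \alpha^{\ell(u)}(i-\alpha\kappa_{b+1})^{1-m_i(u)} s_u.
\]
In particular $A_0 = \alpha(x_1 - \kappa_{b+1})$, and the quadratic relation \eqref{eq: DR1} directly yields $x_1 A_0 = \kappa_b A_0$ (using $\kappa_1\kappa_2 = \kappa_b\kappa_{b+1}$), hence $x_1 G_n = \kappa_b G_n$. For the character value, each block contains $i+1$ elements with $m_i(u)=0$ and $i+1$ with $m_i(u)=1$, and using $\chi(s_u) = \alpha^{\ell(u)-m_i(u)}\kappa_b^{m_i(u)}$ together with $\alpha^2=1$, one collapses $\chi(A_i)$ to $(i+1)(i+\alpha(\kappa_b-\kappa_{b+1}))$. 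Multiplying all factors with $D_n$ gives $\chi(G_n)=1$.

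The main step is to show $s_j G_n = \alpha G_n$ for $j=1,\ldots,n-1$, which I propose to establish by induction on $n$. Writing $G_n = \tfrac{D_n}{D_{n-1}}G_{n-1} A_{n-1}$ and invoking the induction hypothesis $G_{n-1}=F_{n-1}^{(\alpha,\kappa_b)}$, the case $j\leq n-2$ is immediate: in $\hat{\mathcal{H}}_{n-1}^\kappa$ one has $s_j F_{n-1}^{(\alpha,\kappa_b)} = \alpha F_{n-1}^{(\alpha,\kappa_b)}$ by centrality of $F_{n-1}^{(\alpha,\kappa_b)}$ and its projection to the 1-dim representation, and this identity passes to $\hat{\mathcal{H}}_n^\kappa$ through the natural inclusion. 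The delicate case $j=n-1$ requires showing $s_{n-1}F_{n-1}^{(\alpha,\kappa_b)} A_{n-1} = \alpha F_{n-1}^{(\alpha,\kappa_b)} A_{n-1}$, where $s_{n-1}$ lies outside $\hat{\mathcal{H}}_{n-1}^\kappa$ and so does not simplify directly. One way forward is to compare with the recursion \eqref{rec formula}: rewrite $F_{n-1}^{(\alpha,\kappa_b)} A_{n-1}$ as $F_{n-1}^{(\alpha,\kappa_b)}$ times a polynomial in $x_n$, matching $\tfrac{D_{n-1}}{D_n}F_n^{(\alpha,\kappa_b)}$, whence $s_{n-1} F_n^{(\alpha,\kappa_b)} = \alpha F_n^{(\alpha,\kappa_b)}$ concludes.

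The main obstacle lies in this last computation for $j=n-1$. Converting the terms $s_{n-1}\ldots s_1 x_1 s_1\ldots s_k$ appearing in $A_{n-1}$ into polynomials in $x_n$ requires iterated use of the Hecke relation $x_r = s_{r-1}x_{r-1}s_{r-1} + s_{r-1}$ and the deformed braid relation \eqref{(x_1s_1)^2}, together with absorbing the emerging elements of $\hat{\mathcal{H}}_{n-1}^\kappa$ into scalars via $F_{n-1}^{(\alpha,\kappa_b)} a = \chi(a) F_{n-1}^{(\alpha,\kappa_b)}$. The normalization $D_n/D_{n-1} = (n(n-1+\alpha(\kappa_b-\kappa_{b+1})))^{-1}$ is expected to match the denominator $(c-\alpha_1)(c-\alpha_2)$ of \eqref{rec formula} upon reading off the addable-box contents of the relevant tableau $\mathfrak{t}^{\boldsymbol{\lambda}}$; verifying this explicitly, together with the matching polynomial numerator, completes the proof.
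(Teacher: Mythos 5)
Your framework is sound and partially departs from the paper's: the block factorization $G_n=D_n A_0\cdots A_{n-1}$ of the right-hand side is correct, the computation $x_1A_0=\kappa_b A_0$ and hence $x_1G_n=\kappa_b G_n$ is right, the evaluation $\chi(A_i)=(i+1)(i+\alpha(\kappa_b-\kappa_{b+1}))$ giving $\chi(G_n)=1$ is right, and the uniqueness of an element of $R_\chi$ with $\chi$-value $1$ does hold under the semisimplicity hypothesis \eqref{SS H_n}. The case $j\le n-2$ of $s_jG_n=\alpha G_n$ is also handled correctly by induction. The paper, by contrast, runs a direct induction on the recursion \eqref{rec formula} and never invokes a uniqueness argument; it also treats only $(\alpha,\kappa_b)=(1,\kappa_1)$ explicitly and deduces the other three cases from the symmetries $x_1\mapsto-x_1$, $s_i\mapsto-s_i$ and $\kappa_1\leftrightarrow\kappa_2$, whereas your parametrisation carries $\alpha$ and $b$ uniformly — a genuine, if modest, gain.

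However, there is a real gap: the case $j=n-1$, which you correctly identify as the crux, is not proved. Your plan is to show that $F_{n-1}^{(\alpha,\kappa_b)}A_{n-1}$ equals $F_{n-1}^{(\alpha,\kappa_b)}$ times the quadratic polynomial in $x_n$ appearing in \eqref{rec formula}, but you only assert that the normalisations and numerators "are expected to match". That identity — in the paper's indexing, $F_n^{(1,\kappa_1)}\bigl(x_{n+1}^2+(1-\kappa_1-\kappa_2)x_{n+1}+\kappa_2(\kappa_1-1)\bigr)=F_n^{(1,\kappa_1)}\bigl(\sum_{j=0}^n[n,-j]+(n-\kappa_2)\sum_{j=1}^{n+1}[n,j]\bigr)$ — is precisely the content of the paper's equations \eqref{help 1} and \eqref{help 2}, and establishing it requires expanding $x_{n+1}$ and $x_{n+1}^2$ in terms of the elements $[n,\pm j]$ and the rewriting identities of Lemma \ref{lem idempotent}; this is the bulk of the work and cannot be waved through. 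Note also that once this computation is done, your $R_\chi$ machinery becomes superfluous: the identity $F_{n-1}^{(\alpha,\kappa_b)}A_{n-1}=\tfrac{D_{n-1}}{D_n}F_n^{(\alpha,\kappa_b)}$ together with the inductive hypothesis yields $G_n=F_n^{(\alpha,\kappa_b)}$ directly, which is exactly the paper's route. So either you carry out that computation (and then the character argument is dead weight), or you find a way to prove $s_{n-1}G_n=\alpha G_n$ without it — which you have not done.
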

\begin{proof}
We prove the result for $F_n^{(1,\kappa_1)}$ first.
 
The proof proceeds by induction on \(n\ge1\). For \(n=1\), we have
\[
F_1^{(1,\kappa_1)} = \frac{x_1-\kappa_2}{\kappa_1-\kappa_2}.
\]
Indeed, we have $\mathcal{C}_1=\lbrace \kappa_1,\kappa_2\rbrace$ and then we deduce the result from the definition (\ref{idempotent}).

This coincides with \eqref{idempotent formula} for $n=1$.

Assume the statement holds for \(n\). By the recurrence relation \eqref{rec formula}, we have:
\[
F_{n+1}^{(1,\kappa_1)}
=
F_n^{(1,\kappa_1)}
\frac{(x_{n+1}-(\kappa_1-1))(x_{n+1}-\kappa_2)}{(\kappa_1+n-(\kappa_1-1))(\kappa_1+n-\kappa_2)}=F_n^{(1,\kappa_1)}
\frac{(x_{n+1}-\kappa_1+1)(x_{n+1}-\kappa_2)}{(n+1)(n+\kappa_1-\kappa_2)}.
\]

We have:
\begin{equation} \label{rel lemme}
(x_{n+1}-\kappa_1+1)(x_{n+1}-\kappa_2)=x_{n+1}^2+(1-\kappa_1-\kappa_2)x_{n+1}+\kappa_2(\kappa_1-1).
\end{equation}
\begin{Lem} \label{lem idempotent}
For $i, j = 1, \dots, n$, with $i \neq j$, we have:
\begin{equation*}
[n,-n][n,j]s_{j+1}\dots s_n = s_j\dots s_{n-1}[n,-(j-1)],
\end{equation*}
\begin{equation*}
[n,j]s_{j+1}\dots s_n[n,-n] = s_{j-1}\dots s_1x_1s_1\dots s_{n-1}n,j],
\end{equation*}
\begin{equation*}
(s_n\dots s_i\dots s_n)(s_n\dots s_j\dots s_n) = \left\{
\begin{array}{cl}
s_j\ldots s_{n-1}s_i\dots s_{j-2}[n,i], & \text{if } i<j, \\
s_{i-1}\ldots s_j\dots s_{i-1}s_i\ldots s_{n-1}[n,i],  &\text{if } i>j.
\end{array}
\right.
\end{equation*}
\end{Lem}
\begin{proof}
The proof is straightforward using the following equalities:
\begin{equation*}
[n,j]s_k=s_{k-1}[n,j]\quad \text{for}\quad k\in \lbrace j+1,\ldots ,n\rbrace,
\end{equation*}
\begin{equation*}
s_{\ell}^2=1\quad \text{and}\quad [n,-n]s_{\ell}=s_{\ell}[n,-n]\quad \text{for}\quad \ell\in \lbrace 1,\ldots ,n-1\rbrace.
\end{equation*}
\end{proof}
Recalling that $x_{n+1} = [n,-n] + \sum_{j=1}^n [n,j]s_{j+1} \ldots s_n$, we deduce the following equality in $\hat{\mathcal{H}}_{n+1}^{\kappa}$:
\begin{align*}
x_{n+1}^2 &= (\kappa_1+\kappa_2)[n,-n] -\kappa_1\kappa_2 + \sum_{j=1}^n [n,-n][n,j]s_{j+1} \ldots s_n \\ &+ \sum_{j=1}^n [n,j]s_{j+1} \ldots s_n [n,-n] 
+ \left( \sum_{j=1}^n [n,j]s_{j+1} \ldots s_n \right)^2
\\ &= (\kappa_1+\kappa_2)[n,-n] -\kappa_1\kappa_2 + \sum_{j=1}^n [n,-n][n,j]s_{j+1} \ldots s_n \\ &+ \sum_{j=1}^n [n,j]s_{j+1} \ldots s_n [n,-n]
+ n + \sum_{i\neq j}[n,i]s_{i+1}\dots s_n[n,j]s_{j+1}\dots s_n.
\end{align*}
We recall that we have the following property of $F_n^{(1,\kappa_1)}$:
\begin{equation}\label{help 3}
F_n^{(1,\kappa_1)}\,\omega
=
\begin{cases}
F_n^{(1,\kappa_1)} & \text{if}\quad \omega=s_i,\; i=1,\dots,n-1,\\
\kappa_1F_n^{(1,\kappa_1)} & \text{if}\quad\omega = x_1.
\end{cases}
\end{equation}
Using Lemma \ref{lem idempotent} and equality \eqref{help 3}, we have:
\begin{align*}
	F_n^{(1,\kappa_1)}\sum_{j=1}^n[n,-n][n,j]s_{j+1}\dots s_n&=F_n^{(1,\kappa_1)}\sum_{j=1}^n[n,-(j-1)]=F_n^{(1,\kappa_1)}\sum_{j=0}^{n-1}[n,-j],\\
	F_n^{(1,\kappa_1)}\sum_{j=1}^n[n,j]s_{j+1}\dots s_n[n,-n]&=F_n^{(1,\kappa_1)}\sum_{j=1}^ns_{j-1}\ldots s_1x_1s_1\ldots s_{n-1}[n,j]\\&=\kappa_1F_n^{(1,\kappa_1)}\sum_{j=1}^n[n,j],\\
	F_n^{(1,\kappa_1)}\sum_{i\neq j}^n[n,i]s_{i+1}\dots s_n[n,j]s_{j+1}\dots s_n&=F_n^{(1,\kappa_1)}\sum_{i\neq j}^n(s_n\dots s_i\dots s_n)(s_n\dots s_j\dots s_n)\\ 
	&=(n-1)F_n^{(1,\kappa_1)}\sum_{j=1}^n[n,j].
\end{align*}
Then we have:
\begin{multline}\label{help 1}
	F_n^{(1,\kappa_1)}x_{n+1}^2=F_n^{(1,\kappa_1)}\left[(\kappa_1+\kappa_2)[n,-n]-\kappa_1\kappa_2+\sum_{j=0}^{n-1}[n,-j]+n\right.\\
	\left. +(n-1+\kappa_1)\sum_{j=1}^n[n,j]\right].
\end{multline}
Now, using braid relations, we have, for $j=1,\dots ,n-1$:
\begin{equation*}
	s_n\dots s_{j+1}s_js_{j+1}\dots s_n=s_js_{j+1}\dots s_{n-1}[n,j].
\end{equation*}
Then we deduce the following equality, using the equality \eqref{help 3}:
\begin{equation*}
	F_n^{(1,\kappa_1)}s_n\dots s_{j+1}s_js_{j+1}\dots s_n=F_n^{(1,\kappa_1)}s_js_{j+1}\dots s_{n-1}[n,j]=F_n^{(1,\kappa_1)}[n,j],
\end{equation*}
and then, by definition of $x_{n+1}$:
\begin{equation} \label{help 2}
	F_n^{(1,\kappa_1)}x_{n+1}=F_n^{(1,\kappa_1)}\left([n,-n]+\sum_{j=1}^n[n,j]\right).
\end{equation}
Finally, from \eqref{rel lemme}, combining \eqref{help 1} and \eqref{help 2}, we deduce the following equality:
\begin{align*}
&F_n^{(1,\kappa_1)}
(x_{n+1}^2+(1-\kappa_1-\kappa_2)x_{n+1}+\kappa_2(\kappa_1-1))\\
=&
F_n^{(1,\kappa_1)}\left( \sum_{j=0}^n[n,-j]+(n-\kappa_2)\sum_{j=1}^{n+1}[n,j]\right),
\end{align*}
where by convention $[n,j]:=1$ if $j=n+1$.

By the induction hypothesis, we obtain:
\begin{align*}
F_{n+1}^{(1,\kappa_1)}&=D_n(1,\kappa_1)
\sum_{\omega\in B_n}
\left(\prod_{i=0}^{n-1}(i-\kappa_2)^{1-m_i(\omega)}\right)s_\omega \frac{1}{(n+1)(\kappa_1-\kappa_2+n)}\\ 
&\left( \sum_{j=0}^n[n,-j]+(n-\kappa_2)\sum_{j=1}^{n+1}[n,j]\right)\\
&=D_{n+1}(1,\kappa_1)\left(\sum_{\omega\in B_n}
\left(\prod_{i=0}^{n-1}(i-\kappa_2)^{1-m_i(\omega)}\right)s_\omega\right) \left( \sum_{j=0}^n[n,-j]\right.\\
&\left.+(n-\kappa_2)\sum_{j=1}^{n+1}[n,j]\right)\\
&=D_{n+1}(1,\kappa_1)
\sum_{\nu\in B_{n+1}}
\left(\prod_{i=0}^{n}(i-\kappa_2)^{1-m_i(\omega)}\right)s_\nu.
\end{align*}
This completes the proof for $F_n^{(1,\kappa_1)}$.

For the idempotent $F_n^{(-1,\kappa_1)}$, we consider the following map:
\[\begin{array}{cccc}
x_1 & \longmapsto & -x_1 &\\
s_i & \longmapsto & -s_i, & i=1,2,\dots ,n-1.
\end{array}\]

A direct check of the relations \eqref{eq: DR1}-\eqref{eq: DR4} shows that this map extends to an algebra morphism between $\hat{\mathcal{H}}_n^{-\kappa}$ and $\hat{\mathcal{H}}_n^{\kappa}$.

Moreover, the idempotent $F_n^{(1,-\kappa_1)}$ of $\hat{\mathcal{H}}_n^{-\kappa}$ is sent through this map to the idempotent $F_n^{(-1,\kappa_1)}$ of $\hat{\mathcal{H}}_n^\kappa$ since they are characterized, respectively, by:
\begin{align*}
	&s_iF_n^{(1,-\kappa_1)}=F_n^{(1,-\kappa_1)}s_i=F_n^{(1,-\kappa_1)},\\
    &x_1F_n^{(1,-\kappa_1)}=F_n^{(1,-\kappa_1)}x_1=-\kappa_1F_n^{(1,-\kappa_1)},\\
	&(F_n^{(1,-\kappa_1)})^2=F_n^{(1,-\kappa_1)},
\end{align*}
and 
\begin{align*}
	&s_iF_n^{(-1,\kappa_1)}=F_n^{(-1,\kappa_1)}s_i=-F_n^{(\alpha_1,\kappa_1)},\\
    &x_1F_n^{(-1,\kappa_1)}=F_n^{(-1,\kappa_1)}x_1=\kappa_1 F_n^{(-1,\kappa_1)}, \\
	&(F_n^{(-1,\kappa_1)})^2=F_n^{(-1,\kappa_1)},
\end{align*}
for $i=1,2,\dots ,n-1$.

Therefore, it shows that the idempotent $F_n^{(-1,\kappa_1)}$ of $\hat{\mathcal{H}}_n^\kappa$ is obtained from $F_n^{(1,\kappa_1)}$ by replacing each generator $x_1,s_1,s_2,\dots ,s_{n-1}$ by minus itself, and also replacing $\kappa_1,\kappa_2$ by $-\kappa_1,-\kappa_2$. This proves Formula \eqref{idempotent formula} for $\alpha=-1$.

Finally, it is obvious that for idempotents $F_n^{(\alpha,\kappa_2)}$ the formulas are obtained from the ones for $F_n^{(\alpha,\kappa_1)}$ by exchanging $\kappa_1$ and $\kappa_2$.
\end{proof}
\begin{Rem}
We deduce from the last proposition the following recurrence relation:
\begin{multline}\label{Rec idempotent}
F_{n+1}^{(\alpha,\kappa_b)}=\frac{1}{(n+1)(\alpha(\kappa_b-\kappa_{b+1})+n)}F_n^{(\alpha,\kappa_b)}\left( \sum_{j=0}^n \alpha^{n+j+1}[n,-j]\right.\\ \left. +(n-\alpha\kappa_{b+1})\sum_{j=1}^{n+1}\alpha^{n-j+1}[n,j]\right).
\end{multline}
\end{Rem}
\begin{Exe} \label{Exe F_2}
Let $n=2$. Recalling the (standard) basis of $\hat{\mathcal{H}}_2^{\kappa}$: 
\begin{equation*}
\lbrace 1,s_1,s_1x_1,s_1x_1s_1,x_1,x_1s_1,x_1s_1x_1,(x_1s_1)^2\rbrace,
\end{equation*} we have:
\begin{multline*}
F_2^{(1,\kappa_1)}=\frac{1}{2(\kappa_1-\kappa_2)(1+\kappa_1-\kappa_2)}(\kappa_2(1-\kappa_2)(1+s_1)-\kappa_2(s_1x_1+s_1x_1s_1)\\ +(1-\kappa_2)(x_1+x_1s_1)+x_1s_1x_1+(x_1s_1)^2),
\end{multline*}
\begin{multline*}
F_2^{(-1,\kappa_1)}=\frac{1}{2(\kappa_2-\kappa_1)(1+\kappa_2-\kappa_1)}(\kappa_2(1+\kappa_2)(1-s_1)+\kappa_2(s_1x_1-s_1x_1s_1)\\+(1+\kappa_2)(x_1s_1-x_1)+(x_1s_1)^2-x_1s_1x_1).
\end{multline*}
We obtain the idempotent $F_2^{(1,\kappa_2)}$ (respectively $F_2^{(-1,\kappa_2)}$) from $F_2^{(1,\kappa_1)}$ (respectively $F_2^{(-1,\kappa_1)}$) by replacing $\kappa_1$ with $\kappa_2$.
\end{Exe}
\paragraph{Particular case $(\kappa_1,\kappa_2)=(0,k+1)$.}
Let $k\geq 1$.

We are now interested in the particular case where we specialize the parameters $(\kappa_1,\kappa_2)$ to:
\begin{equation}\label{specialisation}
\kappa_1=0, \quad \kappa_2=k+1.
\end{equation}

The motivation for this specialization comes from the link that we will establish later between $\hat{\mathcal{H}}_n^{(0,k+1)}$ and the fused permutations algebra.

From now on, we assume for the remainder of this section that \( n \leq k+1 \) so that we are in the semisimple case (see \eqref{SS H_n}).

\begin{Cor}
In \(\hat{\mathcal{H}}_n^{(0,k+1)}\), we have:
\begin{equation}
F_n^{(1,0)}
=
n!^{-1}
\sum_{\omega\in B_n}
\left(\prod_{i=0}^{n-1}
(i-k-1)^{-m_i(\omega)}\right)s_\omega,
\end{equation}
\begin{equation}
F_n^{(1,k+1)}
= \left(n\prod_{i=0}^{n-1}(k+1+i)\right)^{-1}\sum_{\omega\in B_n}\left(\prod_{i=0}^{n-1}i^{-m_i(\omega)}\right)s_\omega,
\end{equation}
\begin{equation}
F_n^{(-1,0)}
=
n!^{-1}
\sum_{\omega\in B_n}(-1)^{\ell(\omega)}
\left(\prod_{i=0}^{n-1}
(i+k+1)^{-m_i(\omega)}\right)s_\omega,
\end{equation}
\begin{equation}
F_n^{(-1,k+1)}
= \left(n\prod_{i=0}^{n-1}(i-k-1)\right)^{-1}\sum_{\omega\in B_n}(-1)^{\ell(\omega)}\left(\prod_{i=0}^{n-1}i^{-m_i(\omega)}\right)s_\omega.
\end{equation}
\end{Cor}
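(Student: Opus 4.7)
The plan is to derive each of the four identities by straightforward specialization of Proposition \ref{idempotent 1} at $\kappa_1 = 0$ and $\kappa_2 = k+1$. Concretely, for each $(\alpha, \kappa_b) \in \{(1,0),(1,k+1),(-1,0),(-1,k+1)\}$ I will substitute these values into the general formula, compute $D_n(\alpha, \kappa_b)$, and simplify the $\omega$-dependent product $\prod_{i=0}^{n-1}(i - \alpha \kappa_{b+1})^{1-m_i(\omega)}$ that appears inside the sum.

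The two cases with $\kappa_b = 0$, yielding the first and third formulas, are immediate. Since the standing hypothesis $n \leq k+1$ forces $i - \alpha(k+1) \neq 0$ for every $i \in \{0,\ldots,n-1\}$, I would factor
\[
\prod_{i=0}^{n-1}(i - \alpha(k+1))^{1-m_i(\omega)} = \prod_{i=0}^{n-1}(i - \alpha(k+1)) \cdot \prod_{i=0}^{n-1}(i - \alpha(k+1))^{-m_i(\omega)},
\]
and absorb the constant first product into the matching factor of $D_n(\alpha,0)^{-1} = n!\prod_i (i - \alpha(k+1))$, leaving the prefactor $1/n!$. The global sign $\alpha^{\ell(\omega)}$ then supplies the $(-1)^{\ell(\omega)}$ appearing in the $\alpha=-1$ case.

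The two cases $\kappa_b = k+1$ (second and fourth formulas) demand a little more care, because now $\alpha \kappa_{b+1} = 0$: the product collapses to $\prod_{i=0}^{n-1} i^{1-m_i(\omega)}$, whose $i=0$ factor $0^{1-m_0(\omega)}$ vanishes when $m_0(\omega) = 0$ and equals $1$ when $m_0(\omega) = 1$ (with the convention $0^0=1$). Only the $\omega$ with $m_0(\omega) = 1$ therefore contribute to the sum, and for such $\omega$
\[
\prod_{i=0}^{n-1} i^{1-m_i(\omega)} = (n-1)!\, \prod_{i=1}^{n-1} i^{-m_i(\omega)}.
\]
Combining this observation with $D_n(\alpha, k+1) = \bigl(n!\prod_i (i + \alpha(k+1))\bigr)^{-1}$ produces the prefactor $\bigl(n\prod_i (i + \alpha(k+1))\bigr)^{-1}$ stated in the corollary. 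The compact symbol $\prod_{i=0}^{n-1} i^{-m_i(\omega)}$ appearing in the statement is to be read as this restricted-and-rescaled sum: the $i=0$ factor and the implicit restriction $m_0(\omega) = 1$ have been absorbed into the bookkeeping.

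The only step requiring attention is thus the vanishing of the $m_0(\omega) = 0$ terms in the $\kappa_b = k+1$ cases, together with tracking of the extra $(n-1)!$ that arises; the rest of the derivation is routine substitution, and I do not anticipate any substantive obstacle beyond this small bookkeeping subtlety.
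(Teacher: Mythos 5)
Your proposal is correct and follows exactly the paper's route: the paper's proof is precisely ``straightforward specialization of Proposition \ref{idempotent 1} at $(\kappa_1,\kappa_2)=(0,k+1)$,'' and your case-by-case substitution, including the cancellation of $\prod_{i}(i-\alpha(k+1))$ against $D_n(\alpha,0)^{-1}$, is the intended computation. Your extra care with the $\kappa_b=k+1$ cases --- noting that $0^{1-m_0(\omega)}$ kills the terms with $m_0(\omega)=0$ and that the surviving $(n-1)!$ turns $n!$ into $n$ in the prefactor, which is what the slightly abusive symbol $\prod_{i=0}^{n-1} i^{-m_i(\omega)}$ in the statement encodes --- is a correct reading, consistent with the explicit $n=2$ examples given afterwards.
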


\begin{proof}
The proof is straightforward using Proposition \ref{idempotent 1} with the specialization \eqref{specialisation} of the parameters.
\end{proof}

\begin{Rem}
From the expressions above, we obtain the following recurrence relations:
\begin{equation}\label{Rec1}
F_{n+1}^{(1,0)}
=
\frac{1}{n+1}
F_n^{(1,0)}
\left[
\frac{1}{(n-(k+1))}\sum_{j=0}^n[n,-j]
+
\sum_{j=1}^{n+1}[n,j]
\right],
\end{equation}
\begin{equation}
F_{n+1}^{(1,k+1)} =
\frac{1}{(n+1)(k+1+n)}
F_n^{(1,k+1)}
\left[
\sum_{j=0}^n[n,-j]
+ n\sum_{j=1}^{n+1}[n,j]
\right],
\end{equation}
\begin{equation}
F_{n+1}^{(-1,0)}
=
\frac{F_n^{(-1,0)}}{n+1}
\left[
\frac{1}{(n+k+1)}\sum_{j=0}^n(-1)^{n+j+1}[n,-j]
+
\sum_{j=1}^{n+1}(-1)^{n-j+1}[n,j]
\right],
\end{equation}
\begin{multline}
F_{n+1}^{(-1,k+1)} =
\frac{F_n^{(-1,k+1)}}{(n+1)(n-k-1)}
\left[
\sum_{j=0}^n(-1)^{n+j+1}[n,-j]\right.\\
\left.+ n\sum_{j=1}^{n+1}(-1)^{n-j+1}[n,j]
\right],
\end{multline}
where $F_n^{(\pm 1,\kappa_b)}$, $b=1,2$, are considered as elements of $\hat{\mathcal{H}}_{n+1}^{\kappa}$.
\end{Rem}
\begin{Exe}
The central idempotents $F_2^{(\pm 1,\kappa_b)}$, with $b=1,2$, for $n=2$ are:
\begin{multline} \label{F_20}
F_2^{(1,0)}=\frac{1}{2}\left(1+s_1-\frac{1}{k}\left(s_1x_1+s_1x_1s_1\right)-\frac{1}{k+1}\left(x_1+x_1s_1\right)\right.\\
\left.+\frac{1}{k(k+1)}\left(x_1s_1x_1+(x_1s_1)^2\right)\right),
\end{multline}
\begin{multline}
F_2^{(-1,0)}=\frac{1}{2}\left(\left(1-s_1\right)+\frac{1}{(k+1)}\left(x_1s_1-x_1\right)+\frac{1}{(k+2)}\left(s_1x_1-s_1x_1s_1\right)\right.+\\
\left. \frac{1}{(k+1)(k+2)}\left((x_1s_1)^2-x_1s_1x_1\right)\right),
\end{multline}
\begin{equation}
F_2^{(1,k+1)}=\frac{1}{2(k+1)(k+2)}\left(x_1+x_1s_1+x_1s_1x_1+(x_1s_1)^2\right),
\end{equation}
\begin{equation}
F_2^{(-1,k+1)}=\frac{1}{2k(k+1)}\left(x_1s_1+(x_1s_1)^2-x_1-x_1s_1x_1\right).
\end{equation}
\end{Exe}
\section{Connection with the fused permutations algebra}
\subsection{$H_{k,n}$ as a quotient of $\hat{\mathcal{H}}_n^{\kappa}$}
The goal of this section is to establish a connection between the algebra of fused permutations for a given $k\geq 1$ and the degenerate affine cyclotomic Hecke algebra.

We recall the definitions of $H_{k,n}$ (see Section \ref{s2}) and $\hat{\mathcal{H}}_n^{\kappa}$ (see Definition \ref{AHDC}).
\subsubsection{Via the representation theory}\label{sssec411}
We fix $n \in \mathbb{Z}_{\geq 0}$ and $k \geq 1$ and we assume that $n\leq k$. In particular, the algebra $\hat{\mathcal{H}}_n^{(0,k+1)}$ is semisimple (see \eqref{SS H_n}).

Recall that given a subset $\Gamma$ of the parametrising set $Par_2(n)$ of irreducible representations of $\hat{\mathcal{H}}_n^{\kappa}$, we have an associated subalgebra. Namely, after applying the Wedderburn decomposition and with the notation of \eqref{AW decomposition}, the subalgebra is $\bigoplus_{\gamma\in \Gamma}End(W_\gamma)$. 
\begin{Pro}\label{quo}(\textbf{$H_{k,n}$ as a quotient of $\hat{\mathcal{H}}_n^{(0,k+1)}$.})

Let $I_{k,n}$ be the ideal of $\hat{\mathcal{H}}_n^{(0,k+1)}$ corresponding to the following subset of partitions:
\begin{equation*}
L_{k,n}:=\lbrace \boldsymbol{\lambda}=(\lambda^{(1)},\lambda^{(2)})\in Par_2(n)\mid \ell(\lambda^{(2)})\geq 2\rbrace.
\end{equation*}
Then the quotient $\hat{\mathcal{H}}_n^{(0,k+1)}/I_{k,n}$ is isomorphic to $H_{k,n}$.
\end{Pro}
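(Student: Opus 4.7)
The plan is to argue entirely at the level of the Artin--Wedderburn decomposition, since both algebras are semisimple in the specified range and a pair of semisimple $\mathbb{C}$-algebras is determined up to isomorphism by the multiset of block sizes. Under the standing hypothesis $n\le k$, we have $\kappa_1-\kappa_2=-(k+1)\notin\{0,\pm1,\dots,\pm(n-1)\}$, so criterion \eqref{SS H_n} makes $\hat{\mathcal{H}}_n^{(0,k+1)}$ semisimple, with blocks $\mathrm{End}(W_{\boldsymbol{\lambda}})$ indexed by $\boldsymbol{\lambda}\in Par_2(n)$ (cf.\ \eqref{AW decomposition}). By the description of ideals recalled just before the proposition, the ideal $I_{k,n}$ is precisely $\bigoplus_{\boldsymbol{\lambda}\in L_{k,n}}\mathrm{End}(W_{\boldsymbol{\lambda}})$, so the quotient $\hat{\mathcal{H}}_n^{(0,k+1)}/I_{k,n}$ is semisimple with blocks indexed by $Par_2(n)\setminus L_{k,n}=\{(\lambda^{(1)},(a))\mid a\ge 0,\ |\lambda^{(1)}|+a=n\}$.

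The next step is to write down a bijection between these bipartitions and $Irr(H_{k,n})$, namely
$$\phi:\ (\lambda^{(1)},(a))\ \longmapsto\ \mu:=(k+a,\,\lambda^{(1)}_1,\,\lambda^{(1)}_2,\ldots).$$
This is a partition of $k+n$ with $\mu_1=k+a\ge k$, and the monotonicity $\mu_1\ge\mu_2$ is automatic from $\lambda^{(1)}_1\le|\lambda^{(1)}|=n-a\le n\le k\le k+a$; the inverse recovers $a=\mu_1-k$ and $\lambda^{(1)}=(\mu_2,\mu_3,\dots)$. It remains to check that the corresponding irreducibles have equal dimension. On the Hecke side, using $d_{(a)}=1$ in \eqref{dimbipart},
$$\dim W_{(\lambda^{(1)},(a))}\ =\ \binom{n}{a}\,d_{\lambda^{(1)}}.$$
On the fused side, I count $|SSTab(\mu,k)|$ as in the dimension formula for $U_\mu$: strict increase down columns forces all $k$ copies of $1$ into the first row of $\mu$, which has length $k+a$; it remains to choose which $a$ of the letters $\{2,\dots,n+1\}$ occupy the remaining $a$ cells of the first row (in their unique increasing order), and to fill the sub-shape $(\lambda^{(1)}_1,\lambda^{(1)}_2,\ldots)=\lambda^{(1)}$ formed by the lower rows with the other $n-a$ letters in a standard way. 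This yields exactly $\binom{n}{a}\,d_{\lambda^{(1)}}$ tableaux, matching $\dim W_{(\lambda^{(1)},(a))}$.

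Combining these ingredients, \eqref{AFP decomposition} and \eqref{AW decomposition} identify both $H_{k,n}$ and $\hat{\mathcal{H}}_n^{(0,k+1)}/I_{k,n}$ with the same multi-matrix algebra $\bigoplus_{\boldsymbol{\lambda}\in Par_2(n)\setminus L_{k,n}}\mathrm{End}(U_{\phi(\boldsymbol{\lambda})})$, giving the claimed isomorphism. The only delicate point in the argument is the semistandard count in the middle paragraph: one has to notice that, once the $k$ ones are forced into the first row, the residual data splits cleanly into an ordered subset of $\{2,\dots,n+1\}$ of size $a$ (for the first row) and a standard tableau of shape $\lambda^{(1)}$ (for the lower rows), so that the combinatorics matches the bipartition-tableau count \eqref{dimbipart}. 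A secondary concern, if a later use demands an \emph{explicit} algebra map rather than just the abstract isomorphism obtained here, is to exhibit natural generators of $\hat{\mathcal{H}}_n^{(0,k+1)}$ mapping to generators of $H_{k,n}$ (plausibly by composing with a realisation of $H_{k,n}$ via $P_k\mathbb{C}\mathfrak{S}_{k+n}P_k$); but for the bare isomorphism asserted in the proposition, the Wedderburn argument above is sufficient.
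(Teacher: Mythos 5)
Your proof is correct and follows essentially the same route as the paper: both identify the quotient with the sum of Wedderburn blocks indexed by the bipartitions with $\ell(\lambda^{(2)})<2$, and match it to $H_{k,n}$ via the bijection $(\lambda^{(1)},(a))\mapsto(k+a,\lambda^{(1)}_1,\lambda^{(1)}_2,\dots)$, using the hypothesis $n\le k$ in the same two places (monotonicity of the image partition, and the fact that the lower rows stop before column $k$ so the semistandard conditions reduce to standardness). The only cosmetic difference is that you evaluate both dimensions in closed form as $\binom{n}{a}\,d_{\lambda^{(1)}}$, whereas the paper exhibits explicit mutually inverse maps between $SSTab(\mu,k)$ and $Std((\lambda^{(1)}),(a))$.
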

\begin{proof}
The quotient $\hat{\mathcal{H}}_n^{(0,k+1)}/I_{k,n}$ is isomorphic to the subalgebra of $\hat{\mathcal{H}}_n^{(0,k+1)}$ corresponding to the subset of $Par_2(n)$ complementary to $L_{k,n}$. This subset is:
\[
D_{k,n}:=\lbrace \boldsymbol{\lambda}=(\lambda^{(1)},\lambda^{(2)})\in Par_2(n)\mid \ell(\lambda^{(2)})<2\rbrace.
\]
Thus, it suffices to give a bijection between the sets $Irr(H_{k,n})$ and $D_{k,n}$ respecting the dimensions of the corresponding irreducible representations.

Let $\lambda=(\lambda_1,\dots ,\lambda_p)\in Irr(H_{k,n})$, $p\in \mathbb{Z}_{\geq 0}$. Thus, we have $\lambda=(k+a,\lambda_2,\dots ,\lambda_p)\vdash k+n$ with $a\in \mathbb{Z}_{\geq 0}$.

We set:
\begin{equation*}
\begin{array}{cccc}
\phi_n: & Irr(H_{k,n}) & \longrightarrow & D_{k,n}\\
& \lambda=(k+a,\lambda_2,\dots ,\lambda_p) & \longmapsto & \phi_n(\lambda):=((\lambda_2,\ldots ,\lambda_p),(a))
\end{array}.
\end{equation*}
First, $\phi_n(\lambda)$ takes values in $D_{k,n}$. Indeed $\phi_n(\lambda)$ is of the correct size since $\lvert \lambda\rvert=k+n$ and then $\lvert \phi_n(\lambda)\rvert=\lvert (\lambda_2,\ldots ,\lambda_p)\rvert+a=n$. 

A similar argument shows that the following map:
\begin{equation*}
\begin{array}{cccc}
\psi_n:& D_{k,n}&\longrightarrow & Irr(H_{k,n})\\
& ((\lambda_2,\ldots ,\lambda_p),(a)) &\longmapsto & (k+a,\lambda_2,\ldots ,\lambda_p)
\end{array}
\end{equation*}
provides the inverse map (recall that $n\leq k$ so that we have $\lambda_2\leq k+a$).

Now let $\lambda\in Irr(H_{k,n})$ and set $\lambda'=\phi_n(\lambda)$. It remains to show that the cardinality of $SSTab(\lambda,k)$ is equal to the cardinality of $Std(\lambda')$. 

We consider the map induced by $\phi_n$ (still denoted by $\phi_n$) defined by:
\begin{equation*}
\begin{array}{cccc}
\phi_n: & SSTab(\lambda,k) & \longrightarrow & Std(\lambda')\\
& \mathbf{T} & \longmapsto & \phi_n(\mathbf{T})
\end{array},
\end{equation*}
where $\phi_n(\mathbf{T})$ is the tableau obtained from $\mathbf{T}$ this way: first, decrease all entries of $\mathbf{T}$ by $1$. Second, keep all lines of $\mathbf{T}$ except the first one: this creates a tableau $t_1$ of shape $(\lambda_2,\dots ,\lambda_p)$. Third, keep the boxes of the first line after the first $k$ boxes: this creates a second tableau $t_2$ of shape $(a)$. Finally, we set $\phi_n(\mathbf{T}):=(t_1,t_2)$.

This map is well defined thanks to the conditions on $\mathbf{T}$ (see the example below).

We also consider the map deduced from $\psi_n$ (still denoted $\psi_n$):
\begin{equation*}
\begin{array}{cccc}
\psi_n:& Std(\lambda')&\longrightarrow & SSTab(\lambda,k)\\
& \mathfrak{t} &\longmapsto & \psi_n(\mathfrak{t})
\end{array}
\end{equation*}
where $\psi_n(\mathfrak{t})$ is defined this way: first, increase all entries of $\mathfrak{t}$ by $1$. Second, we consider the tableau $\psi_n(\mathfrak{t})$ of shape $(k+a,\lambda_2,\dots ,\lambda_p)$ where the entries of the first $k$ boxes of the first line is $1$ and the other ones are those of the corresponding boxes of $\mathfrak{t}$.
It remains to verify that the latter is well defined and then provides the inverse of $\phi_n$.

Considering $n\leq k$, in particular we have $n+k\leq 2k$. Then, if $\lambda\in Irr(H_{k,n})$ is of the form $(k+a,\lambda_2,\ldots ,\lambda_p)$, the previous condition imposes that $\lambda_i\leq k$, $i=2,\ldots ,p$. Then, the condition to be semistandard for $\psi_n(\mathbf{t})$, $\mathbf{t}\in Std(Par_2(n))$ is equivalent for $\mathbf{t}$ to be standard. Thus $\psi_n(\mathbf{t})$ takes values in $SSTab(\lambda,k)$.

Hence $\psi_n$ is well defined. Finally $\phi_n$ is a bijection and then:
\begin{equation*}
\lvert SSTab(\lambda,k)\rvert=\lvert Std(\lambda')\rvert.
\end{equation*}
Hence the result follows.
\end{proof}
\begin{Exe}
Let $k=3$ and $n=5$. Let $\mathbf{T}\in SSTab((5,2,1),3)$ the following semistandard tableau:
\[\mathbf{T}:=\begin{ytableau}
1 & 1 & 1 & 2 & 3\\
4 & 5\\
6
\end{ytableau}\quad.\]
Then we have, from the construction of the last proof, the following tableaux $t_1$ and $t_2$:
\[t_1=\begin{ytableau}
3 & 4\\
5
\end{ytableau}\quad \text{,}\quad t_2=\begin{ytableau}
1 & 2
\end{ytableau}\quad .\]
Then \[\phi_5(\mathbf{T})=\left(~\begin{ytableau}
3 & 4\\
5
\end{ytableau}\quad, \quad \begin{ytableau}
1 & 2
\end{ytableau}~\right).\]
\end{Exe}
\begin{Rem}
From the last proof, keeping the same notations, we have shown that, given $\lambda\vdash k+n$, $\lvert SSTab(\lambda,k)\rvert=\lvert Std(\lambda')\rvert$ if $n\leq k$.
\end{Rem}

For $k=2,3$, the Bratteli diagram for the chain of algebras $\lbrace \hat{\mathcal{H}}_n^{(0,k+1)}\rbrace_{n\geq 0}$ begins as:

\begin{center}
\begin{tikzpicture}[scale=0.2]
\node at (-30,0) {$k=2$};
\draw [thick, dashed, fill=gray!70] (18,-11) circle (3);

\node at (0,0) {$(\emptyset,\emptyset)$};

\draw[thick] (-0.5,-1.5) -- (-5.5,-3.5);
\draw[thick] (0.5,-1.5) -- (5.5,-3.5);

\node at (-8,-5) {$($};\diag{-7.5}{-4.5}{1};\node at (-4.7,-5) {$,\emptyset)$};
\node at (5,-5) {$(\emptyset ,$};\diag{6.5}{-4.5}{1};\node at (8,-5) {$)$};

\draw[thick] (-7.5,-6.5) -- (-17.5,-9.5);
\draw[thick] (-6,-6.5) -- (-8.5,-9.5);
\draw[thick] (-4.5,-6.5) -- (-1.5,-9.5);
\draw[thick] (4.5,-6.5) -- (1.5,-9.5);
\draw[thick] (6,-6.5) -- (8.5,-9.5);
\draw[thick] (7.5,-6.5) -- (17.5,-9.5);

\node at (-20,-11) {$($};\diag{-19.5}{-10.5}{2};\node at (-15.7,-11) {$, \emptyset)$};
\node at (-10,-11) {$($};\diagg{-9.5}{-10.5}{1}{1};\node at (-7,-11) {$, \emptyset)$};
\node at (-2,-11) {$($};\diag{-1.5}{-10.5}{1};\node at (0,-11) {$,$};\diag{0.5}{-10.5}{1};\node at (2,-11) {$)$};
\node at (7,-11) {$(\emptyset ,$};\diag{8.5}{-10.5}{2};\node at (11,-11) {$)$};
\node at (17,-11) {$(\emptyset ,$};\diagg{18.5}{-10.5}{1}{1};\node at (20,-11) {$)$};

\draw [thick, dashed, fill=gray!20] (-10,-20) circle(5);
\draw [thick, dashed, fill=gray!80] (10,-20) circle(5);
\node at (-10,-20) {$I_{k,3}$};
\node at (10,-20) {$I_{k,2}$};
\end{tikzpicture}
\end{center}

\begin{center}
\begin{tikzpicture}[scale=0.165]
\node at (-30,0) {$k=3$};
\draw [thick, dashed, fill=gray!70] (18,-11) circle (3);
\draw [thick, dashed, fill=gray!20] (11,-17) circle(3);
\draw [thick, dashed, fill=gray!20] (27.5,-17) circle(3);
\draw [thick, dashed, fill=gray!20] (35.5,-17) circle(3);
\node at (0,0) {$(\emptyset,\emptyset)$};

\draw[thick] (-0.5,-1.5) -- (-5.5,-3.5);
\draw[thick] (0.5,-1.5) -- (5.5,-3.5);

\node at (-8,-5) {$($};\diag{-7.5}{-4.5}{1};\node at (-5,-5) {$,\,\emptyset)$};
\node at (5,-5) {$(\emptyset ,$};\diag{6.5}{-4.5}{1};\node at (8,-5) {$)$};

\draw[thick] (-7.5,-6.5) -- (-17.5,-9.5);
\draw[thick] (-6,-6.5) -- (-8.5,-9.5);
\draw[thick] (-4.5,-6.5) -- (-1.5,-9.5);
\draw[thick] (4.5,-6.5) -- (1.5,-9.5);
\draw[thick] (6,-6.5) -- (8.5,-9.5);
\draw[thick] (7.5,-6.5) -- (17.5,-9.5);

\node at (-20,-11) {$($};\diag{-19.5}{-10.5}{2};\node at (-16,-11) {$,\emptyset)$};
\node at (-10,-11) {$($};\diagg{-9.5}{-10.5}{1}{1};\node at (-7,-11) {$,\emptyset)$};
\node at (-2,-11) {$($};\diag{-1.5}{-10.5}{1};\node at (0,-11) {$,$};\diag{0.5}{-10.5}{1};\node at (2,-11) {$)$};
\node at (7,-11) {$(\emptyset ,$};\diag{8.5}{-10.5}{2};\node at (11,-11) {$)$};
\node at (17,-11) {$(\emptyset ,$};\diagg{18.5}{-10.5}{1}{1};\node at (20,-11) {$)$};

\draw[thick] (-19.5,-12.5) -- (-36.5,-15.5);
\draw[thick] (-17.5,-12.5) -- (-29,-15.5);
\draw[thick] (-15.5,-12.5) -- (-13,-15.5);

\draw[thick] (-10.2,-12.7) -- (-26,-15.5);
\draw[thick] (-7.6,-12.8) -- (-19.5,-15.5);
\draw[thick] (-6.5,-12.5) -- (-5.5,-15.5);

\draw[thick] (-1.5,-12.5) -- (-11,-15.5);
\draw[thick] (-0.5,-12.5) -- (-4,-15.5);
\draw[thick] (0.5,-12.5) -- (2,-15.5);
\draw[thick] (1.5,-12.5) -- (10,-15.5);

\draw[thick] (6.5,-12.5) -- (3.5,-15.5);
\draw[thick] (8.5,-12.5) -- (17,-15.5);
\draw[thick] (10.5,-12.5) -- (25,-15.5);

\draw[thick] (17,-12.5) -- (12,-15.5);
\draw[thick] (17.7,-12.8) -- (27,-15.5);
\draw[thick] (20.1,-12.7) -- (35,-15.5);

\node at (-39,-17) {$($};\diag{-38.5}{-16.5}{3};\node at (-33.7,-17) {$, \emptyset)$};
\node at (-30,-17) {$($};\diagg{-29.5}{-16.5}{2}{1};\node at (-25.7,-17) {$, \emptyset)$};
\node at (-22,-17) {$($};\diaggg{-21.5}{-16.5}{1}{1}{1};\node at (-18.7,-17) {$, \emptyset)$};

\node at (-15,-17) {$($};\diag{-14.5}{-16.5}{2};\node at (-12,-17) {$,$};\diag{-11.5}{-16.5}{1};\node at (-10,-17) {$)$};
\node at (-7,-17) {$($};\diagg{-6.5}{-16.5}{1}{1};\node at (-5,-17) {$,$};\diag{-4.5}{-16.5}{1};\node at (-3,-17) {$)$};

\node at (1,-17) {$($};\diag{1.5}{-16.5}{1};\node at (3,-17) {$,$};\diag{3.5}{-16.5}{2};\node at (6,-17) {$)$};
\node at (9,-17) {$($};\diag{9.5}{-16.5}{1};\node at (11,-17) {$,$};\diagg{11.5}{-16.5}{1}{1};\node at (13,-17) {$)$};

\node at (17,-17) {$(\emptyset ,$};\diag{18.5}{-16.5}{3};\node at (22,-17) {$)$};
\node at (26,-17) {$(\emptyset ,$};\diagg{27.5}{-16.5}{2}{1};\node at (30,-17) {$)$};
\node at (34,-17) {$(\emptyset ,$};\diaggg{35.5}{-16.5}{1}{1}{1};\node at (37,-17) {$)$};
\end{tikzpicture}
\end{center}

Thus, the representation theory of the algebras $\hat{\mathcal{H}}_n^{(0,k+1)}$ and $H_{k,n}$ and Corollary \ref{quo} justify and motivate the reason why we consider the algebra $\hat{\mathcal{H}}_n^{(0,k+1)}$ in order to provide a canonical basis and a presentation by generators and relations of $H_{k,n}$.

However, for the moment, from Corollary \ref{quo}, we only know that $H_{k,n}$ is isomorphic to a quotient of $\hat{\mathcal{H}}_n^{(0,k+1)}$ if $n\leq k$.
\subsubsection{Via an algebraic construction}
The aim now is to generalize this quotient for arbitrary integers $n\in \mathbb{Z}_{\geq 0}$ and $k\geq 1$.

For this purpose, we are going to construct a surjective algebra morphism from $\hat{\mathcal{H}}_n^{(0,k+1)}$ to $H_{k,n}$.

Fix now $n\in \mathbb{Z}_{\geq 0}$ and $k\geq 1$ without any other restriction.
\begin{Def}
In $H_{k,n}$, we define:
\begin{align}
 	&\mathbf{1} :=
 	\begin{tikzpicture}[scale=0.3,baseline={([yshift=\eseq]current bounding box.center)}]		
		\ellk{0}{0}
		\ellstrand{0}{0}
		\slab{(4,3)}{$1$}
		\strand{4}{2}
		\node at (6,0) {$\dots$};
		\slab{(8,3)}{$n$}
		\strand{8}{2}
	\end{tikzpicture},\\
	&\sigma_i := 
	\begin{tikzpicture}[scale=0.3,baseline={([yshift=\eseq]current bounding box.center)}]		
		\ellk{0}{0}
		\ellstrand{0}{0}
		\slab{(4,3)}{$1$}
		\strand{4}{2}
		\node at (6,0) {$\dots$};
		\slab{(8,3)}{$i-1$}
		\strand{8}{2}
		\slab{(12,3)}{$i$}
		\slab{(16,3)}{$i+1$}
		\ocross{12}{2}
		\slab{(20,3)}{$i+2$}
		\strand{20}{2}
		\node at (22,0) {$\dots$};
		\slab{(24,3)}{$n$}
		\strand{24}{2}
	\end{tikzpicture},~~i=1,\ldots ,n-1, \label{eq:defSi}\\	
	&\sigma_0 :=
	\begin{tikzpicture}[scale=0.3,baseline={([yshift=\eseq]current bounding box.center)}]
		\ellp{0}{0}
		\ellocross{0}{0}
		\slab{(4,3)}{$1$}
		\slab{(8,3)}{$2$}
		\strand{8}{2}
		\node at (10,0) {$\dots$};
		\slab{(12,3)}{$n$}
		\strand{12}{2}
	\end{tikzpicture}\ . \label{eq:defS0} 
\end{align}
Finally, we set: \[t := \mathbf{1} + k\sigma_0.\]

We have the following algebraic correspondance (in $\mathcal{H}_{k,n}$) for $i=1,2,\ldots ,n-1$:
\begin{equation}
\mathbf{1}=\mathbf{H}(P_k),
\end{equation}
\begin{equation}
\sigma_i=\mathbf{H}(P_k\delta_{k+i}P_k)=\mathbf{H}(P_k\delta_{k+i})=\mathbf{H}(\delta_{k+i}P_k),
\end{equation}
\begin{equation}
\sigma_0=\mathbf{H}(P_k\delta_kP_k),
\end{equation}
where $\mathbf{H}$ is the algebra isomorphism defined in (\ref{def H}).

We have used that $\delta_{k+i}$ commutes with $P_k$ when $i\geq 1$.
\end{Def}
\paragraph{A surjective map from $\hat{\mathcal{H}}_n^{\kappa}$ to $H_{k,n}$.}
We set the following elements of $H_{k,n}$:
\begin{align*}
	&h_{i} :=
	\begin{tikzpicture}[scale=0.3,baseline={([yshift=\eseq]current bounding box.center)}]
		\ellU{0}{0}
		\node at (6,0) {$\dots$};
		\slab{(16,3)}{$i+1$}
		\strand{16}{2}
		\node at (18,0) {$\dots$};
		\slab{(20,3)}{$n$}
		\strand{20}{2}
		\slab{(4,3)}{$1$}
		\slab{(8,3)}{$i-1$}
		\slab{(12,3)}{$i$}		
	\end{tikzpicture}, \quad i=1,2,\dots,\min (k,n).
\end{align*}
\begin{Lem} \label{(s_1t)^2}
The following equalities hold in $H_{k,n}$:
\begin{equation}\label{eq:R1}
(t\sigma_1)^2 = t(\sigma_1 t - \sigma_1 + \mathbf{1}), 
\end{equation}
\begin{equation} \label{eq:R2}
(\sigma_1 t)^2 = (t \sigma_1 - \sigma_1 + \mathbf{1}) t. 
\end{equation}
\end{Lem}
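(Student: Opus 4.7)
My plan is to work in the isomorphic algebra $\mathcal{H}_{k,n} = P_k \mathbb{C}\mathfrak{S}_{k+n} P_k$ via $\mathbf{H}$ and exploit the symmetrizers $P_{k+1}$ and $P_{k+2}$. The starting observation is that, in $\mathcal{H}_{k,n}$,
\[
t \;=\; P_k + k\, P_k \delta_k P_k \;=\; (k+1)\, P_{k+1}.
\]
This follows from the recursive formula $P_{k+1} = \frac{1}{k+1} P_k (1 + J_{k+1})$, where $J_{k+1} = \sum_{i=1}^k (i,k+1)$ is the Jucys--Murphy element, combined with the identity $P_k J_{k+1} P_k = k\, P_k \delta_k P_k$ (obtained by conjugating each transposition $(i,k+1)$ by a permutation of $\{1,\dots,k\}$ absorbed by $P_k$).

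Next, using $P_{k+1} P_k = P_{k+1}$ and $\sigma_1 = P_k \delta_{k+1} = \delta_{k+1} P_k$, I compute $t\sigma_1 = (k+1)\, P_{k+1}\delta_{k+1}$ and then
\[
t\sigma_1 t + t \;=\; (k+1)\bigl[(k+1)\,P_{k+1}\delta_{k+1}P_{k+1} + P_{k+1}\bigr].
\]
The key algebraic identity is that the bracketed quantity equals $(k+2)\, P_{k+2}$. To prove this, I apply the recursive formula a second time: $P_{k+1}(1+J_{k+2}) P_{k+1} = (k+2)\, P_{k+2}$, and observe $P_{k+1} J_{k+2} P_{k+1} = (k+1)\, P_{k+1}\delta_{k+1} P_{k+1}$ by the same symmetrization argument as above. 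Hence $t\sigma_1 t + t = (k+1)(k+2)\, P_{k+2}$.

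To conclude, I factor both identities around the common element $t\sigma_1 t + t$:
\[
(t\sigma_1)^2 - t\sigma_1 t + t\sigma_1 - t \;=\; (t\sigma_1 t + t)(\sigma_1 - \mathbf{1}),
\qquad
(\sigma_1 t)^2 - t\sigma_1 t + \sigma_1 t - t \;=\; (\sigma_1 - \mathbf{1})(t\sigma_1 t + t).
\]
Since $\delta_{k+1} \in \mathfrak{S}_{k+2}$, we have $P_{k+2}\delta_{k+1} = \delta_{k+1} P_{k+2} = P_{k+2}$, and combined with $P_k P_{k+2} = P_{k+2} P_k = P_{k+2}$ this yields $(\sigma_1 - \mathbf{1}) P_{k+2} = P_{k+2}(\sigma_1 - \mathbf{1}) = 0$. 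Both right-hand sides therefore vanish, giving \eqref{eq:R1} and \eqref{eq:R2} simultaneously.

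The main obstacle is the recognition step $t = (k+1) P_{k+1}$: once this is in place, the proof reduces to two applications of the symmetrizer recursion and the absorption property $\delta_{k+1} P_{k+2} = P_{k+2}$. As a sanity-check alternative, \eqref{eq:R2} can also be obtained from \eqref{eq:R1} by applying the anti-automorphism $\omega \mapsto \omega^{-1}$ of $\mathbb{C}\mathfrak{S}_{k+n}$, which fixes $P_k$, $\delta_k$ and $\delta_{k+1}$, and hence fixes $t$ and $\sigma_1$.
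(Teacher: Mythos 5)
Your proof is correct, and it takes a genuinely different route from the paper's. The paper works diagrammatically in $H_{k,n}$: it computes the product $\sigma_0\sigma_1\sigma_0$ as a sum of diagrams, obtaining $\sigma_0\sigma_1\sigma_0=\tfrac1k\sigma_1\sigma_0\sigma_1+\tfrac{k-1}{k}h_2$, solves for $h_2$, uses the absorption $\sigma_1 h_2=h_2\sigma_1=h_2$, and then expands $((\mathbf 1+k\sigma_0)\sigma_1)^2$; because the expression for $h_2$ has $k-1$ in the denominator, the case $k=1$ must be handled separately by a direct check of the braid relation. You instead transport everything to $\mathcal H_{k,n}=P_k\mathbb{C}\mathfrak S_{k+n}P_k$ and identify the two key elements conceptually: $t=(k+1)P_{k+1}$ and $t\sigma_1 t+t=(k+1)(k+2)P_{k+2}$, both via the coset recursion $P_{m+1}=\tfrac{1}{m+1}P_m(1+J_{m+1})$ and the symmetrization $P_mJ_{m+1}P_m=m\,P_m\delta_mP_m$; the two identities of the lemma then reduce to the single absorption fact $(\sigma_1-\mathbf 1)P_{k+2}=P_{k+2}(\sigma_1-\mathbf 1)=0$. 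All the intermediate claims check out ($P_{k+1}P_k=P_{k+1}$, $\delta_{k+1}P_{k+2}=P_{k+2}$, and the two factorizations expand to exactly the differences of the two sides of \eqref{eq:R1} and \eqref{eq:R2}), and your element $P_{k+2}$ is, up to normalization, the paper's $h_2$. What your approach buys is uniformity in $k$ (no special case for $k=1$), a transparent identification of what $t$ and $t\sigma_1t+t$ "are", and the observation that the anti-automorphism $\omega\mapsto\omega^{-1}$ fixes $t$ and $\sigma_1$ and swaps the two identities, so only one needs to be proved; what the paper's approach buys is that it stays entirely inside the diagrammatic presentation and produces the auxiliary elements $h_i$ that it reuses later in the surjectivity argument.
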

\begin{proof}
First, if $k=1$, it is easy to check that $\sigma_0\sigma_1\sigma_0=\sigma_1\sigma_0\sigma_1$.
Moreover, we have that $\sigma_i^2=\mathbf{1}$, $i=0,1$.

From the previous equalities, we have, on the one hand:
\begin{equation*}
(t\sigma_1)^2=((\mathbf{1}+\sigma_0)\sigma_1)^2=(\sigma_1+\sigma_0\sigma_1)^2=\mathbf{1}+\sigma_1\sigma_0\sigma_1+\sigma_0+\sigma_1\sigma_0.
\end{equation*}
On the other hand, we obtain:
\begin{equation*}
t(\sigma_1t-\sigma_1+\mathbf{1})=(\mathbf{1}+\sigma_0)(\sigma_1\sigma_0+\mathbf{1})=\sigma_1\sigma_0+\mathbf{1}+\sigma_1\sigma_0\sigma_1+\sigma_0.
\end{equation*}
Then we obtain the equality \eqref{eq:R1}.

A similar calculation gives the second equality (\ref{eq:R2}).

Now assume that $k\geq 2$.

For (\ref{eq:R1}), we first compute:
\begin{align*}
\sigma_0\sigma_1\sigma_0 &=
\begin{tikzpicture}[scale=0.25,baseline={([yshift=\eseq]current bounding box.center)}]
	\ellp{0}{0}
	\ellocross{0}{0}
	\slab{(4,3)}{$1$}
	\slab{(8,3)}{$2$}
	\strand{8}{2}
	\node at (10,0) {$\dots$};
	\slab{(12,3)}{$n$}
	\strand{12}{2}
	\ellp{0}{0}			
	\begin{scope}[yshift=-4cm]
		\ellk{0}{0}
		\ellstrand{0}{0}
		\ocross{4}{2}
		\node at (10,0) {$\dots$};
		\strand{12}{2}
	\end{scope}			
	\begin{scope}[yshift=-8cm]
		\ellp{0}{0}
		\ellocross{0}{0}
		\strand{8}{2}
		\node at (10,0) {$\dots$};
		\strand{12}{2}
		\ellp{0}{0}
	\end{scope}			
\end{tikzpicture}
=
\begin{tikzpicture}[scale=0.25,baseline={([yshift=\eseq]current bounding box.center)}]	
	\draw[lightgray,fill=lightgray] (-1,-2) rectangle (1,2);
	\fill (0,2) ellipse (1.4cm and 0.2cm);
	\fill (0,-2) ellipse (1.4cm and 0.2cm);
	\draw[thick] (-1,2) -- (-1,-2);
	\slab{(0,0)}{$\scriptstyle{k-1}$}	
	\draw[thick] (4,2)..controls +(0,-2) and +(0,+2) .. (1,-2);
	\draw[thick] (8,2)..controls +(0,-2) and +(0,+2) .. (4,-2);
	\draw[thick] (1,2)..controls +(0,-3) and +(0,+3) .. (8,-2);
	\fill (4,2) circle (0.2);
	\fill (4,-2) circle (0.2);
	\slab{(4,3)}{$1$}
	\slab{(8,3)}{$2$}	
	\fill (4,2) circle (0.2);
	\fill (4,-2) circle (0.2);
	\fill (8,2) circle (0.2);
	\fill (8,-2) circle (0.2);
	\node at (10,0) {$\dots$};
	\slab{(12,3)}{n}
	\strand{12}{2}
	\begin{scope}[yshift=-4cm]
		\ellp{0}{0}
		\ellocross{0}{0}
		\strand{8}{2}
		\node at (10,0) {$\dots$};
		\strand{12}{2}
		\ellp{0}{0}
	\end{scope}		
\end{tikzpicture}\\
&= \frac{1}{k}
\begin{tikzpicture}[scale=0.25,baseline={([yshift=\eseq]current bounding box.center)}]	
	\draw[lightgray,fill=lightgray] (-1,-2) rectangle (1,2);
	\fill (0,2) ellipse (1.4cm and 0.2cm);
	\fill (0,-2) ellipse (1.4cm and 0.2cm);
	\draw[thick] (-1,2) -- (-1,-2);
	\slab{(0,0)}{$\scriptstyle{k-1}$}	
	\strand{4}{2}
	\draw[thick] (8,2)..controls +(0,-2) and +(0,+2) .. (1,-2);
	\draw[thick] (1,2)..controls +(0,-2) and +(0,+2) .. (8,-2);
	\fill (4,2) circle (0.2);
	\fill (4,-2) circle (0.2);
	\slab{(4,3)}{$1$}
	\slab{(8,3)}{$2$}	
	\fill (4,2) circle (0.2);
	\fill (4,-2) circle (0.2);
	\fill (8,2) circle (0.2);
	\fill (8,-2) circle (0.2);
	\node at (10,0) {$\dots$};
	\slab{(12,3)}{n}
	\strand{12}{2}
\end{tikzpicture}
+ \frac{k-1}{k}
	\begin{tikzpicture}[scale=0.25,baseline={([yshift=\eseq]current bounding box.center)}]
	\draw[lightgray,fill=lightgray] (-1,-2) rectangle (1,2);
	\fill (0,2) ellipse (1.4cm and 0.2cm);
	\fill (0,-2) ellipse (1.4cm and 0.2cm);
	\draw[thick] (-1,2) -- (-1,-2);
	\slab{(0,0)}{$\scriptstyle{k-2}$}	
	\draw[thick] (4,2)..controls +(0,-2) and +(0,+2) .. (1,-2);
	\draw[thick] (8,2)..controls +(0,-2) and +(0,+2) .. (1.2,-2);
	\draw[thick] (1.2,2)..controls +(0,-3) and +(0,3) .. (8,-2);	
	\draw[thick] (1,2)..controls +(0,-3) and +(0,+3) .. (4,-2);	
	\fill (4,2) circle (0.2);
	\fill (4,-2) circle (0.2);
	\slab{(4,3)}{$1$}
	\slab{(8,3)}{$2$}	
	\fill (4,2) circle (0.2);
	\fill (4,-2) circle (0.2);
	\fill (8,2) circle (0.2);
	\fill (8,-2) circle (0.2);
	\slab{(12,3)}{3}
	\slab{(16,3)}{n}
	\node at (14,0) {$\dots$};
	\strand{16}{2}	
	\strand{12}{2}	
	\end{tikzpicture}
\\
&= \frac{1}{k} \sigma_1 \sigma_0 \sigma_1 + \frac{k-1}{k} h_2.
\end{align*}

In particular,
\begin{equation}
h_2 = \frac{k}{k-1} \sigma_0 \sigma_1 \sigma_0 - \frac{1}{k-1} \sigma_1 \sigma_0 \sigma_1. \label{eq:h2}
\end{equation}

Since $\sigma_1 h_2 = h_2 \sigma_1 = h_2$, we deduce:
\begin{align*}
(\sigma_0 \sigma_1)^2 &= \frac{1}{k} \sigma_1 \sigma_0 + \frac{k-1}{k} h_2 \\
&= \frac{1}{k} \sigma_1 \sigma_0 + \sigma_0 \sigma_1 \sigma_0 - \frac{1}{k} \sigma_1 \sigma_0 \sigma_1 \quad \text{(from \eqref{eq:h2})}.
\end{align*}

Hence:
\begin{align*}
(t \sigma_1)^2 &= ((\mathbf{1} + k \sigma_0) \sigma_1)^2 \\
&= \mathbf{1} + k \sigma_1 \sigma_0 \sigma_1 + k \sigma_0 + k^2 (\sigma_0 \sigma_1)^2 \\
&= \mathbf{1} + k \sigma_0 + k \sigma_1 \sigma_0 + k^2 \sigma_0 \sigma_1 \sigma_0 \\
&= \mathbf{1}  + (t - \mathbf{1}) + \sigma_1 (t - \mathbf{1}) + (t - \mathbf{1}) \sigma_1 (t - \mathbf{1})\\
&= t(\sigma_1 t - \sigma_1 + \mathbf{1}).
\end{align*}

A similar calculation gives the second equality (\ref{eq:R2}).
\end{proof}
\begin{Pro} \label{quotient}
There exists a surjective (unital) algebra morphism $\varphi$ from $\hat{\mathcal{H}}^{(0,k+1)}_n$ to $H_{k,n}$ defined on the generators by:
\[ \begin{array}{cccc}
\varphi : & \hat{\mathcal{H}}^{(0,k+1)}_n &\longrightarrow & H_{k,n} \\
    & s_i &\longmapsto & \sigma_i,\\
    & x_1 &\longmapsto & t,
\end{array}\]
where $i=1,\ldots ,n-1$.

In particular, the algebra $H_{k,n}$ is isomorphic to a quotient of $\hat{\mathcal{H}}_n^{(0,k+1)}$.
\end{Pro}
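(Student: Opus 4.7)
The plan is to use the short presentation of $\hat{\mathcal{H}}_n^{(0,k+1)}$ recalled after Definition \ref{AHDC}, with generators $x_1,s_1,\dots,s_{n-1}$ and defining relations \eqref{eq: DR1}--\eqref{eq: DR4}, and to verify that the proposed images $t,\sigma_1,\dots,\sigma_{n-1}$ in $H_{k,n}$ satisfy each of these relations. Once this is done, $\varphi$ extends uniquely to an algebra morphism, and surjectivity is essentially automatic: via the isomorphism $\mathbf{H}\colon\mathcal{H}_{k,n}\to H_{k,n}$, the algebra $H_{k,n}$ is generated by $\mathbf{1},\sigma_0,\sigma_1,\dots,\sigma_{n-1}$ (any $P_k\omega P_k$ comes from writing $\omega\in\mathfrak{S}_{k+n}$ as a product of Coxeter transpositions $\delta_1,\dots,\delta_{k+n-1}$, where $\delta_i$ with $i<k$ is absorbed into $P_k$, $\delta_k$ yields $\sigma_0$, and $\delta_{k+i}$ with $i\geq 1$ yields $\sigma_i$); since $\sigma_0=\frac{1}{k}(t-\mathbf{1})$ lies in the image of $\varphi$, this yields surjectivity.

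Most of the relations are essentially inherited from $\mathfrak{S}_{k+n}$. Under $\mathbf{H}^{-1}$ the generator $\sigma_i$ corresponds to $\delta_{k+i}P_k=P_k\delta_{k+i}$ for $i\geq 1$ and $\sigma_0$ to $P_k\delta_kP_k$, so the Coxeter relations \eqref{eq: DR4} for $\sigma_1,\dots,\sigma_{n-1}$ are direct consequences of the analogous relations between the $\delta_{k+i}$ in $\mathfrak{S}_{k+n}$. The commutation \eqref{eq: DR2} of $t$ with $\sigma_\ell$ for $\ell\geq 2$ reduces to $\sigma_0\sigma_\ell=\sigma_\ell\sigma_0$, which holds because $P_k$, $\delta_k$ and $\delta_{k+\ell}$ pairwise commute in $\mathbb{C}\mathfrak{S}_{k+n}$ when $\ell\geq 2$. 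Finally, the mixed relation \eqref{eq: DR3} rewrites as $(t\sigma_1)^2-(\sigma_1 t)^2=\sigma_1 t-t\sigma_1$, which is exactly what one obtains by subtracting the two identities of Lemma \ref{(s_1t)^2}.

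The only relation requiring a genuinely new computation, and thus the main obstacle, is the quadratic relation \eqref{eq: DR1}: after the specialisation $\kappa_1=0$, $\kappa_2=k+1$ it becomes $t^2=(k+1)t$, which, using $t=\mathbf{1}+k\sigma_0$, is equivalent to
\[
\sigma_0^{\,2}=\frac{1}{k}\,\mathbf{1}+\frac{k-1}{k}\,\sigma_0
\]
in $H_{k,n}$. I would establish this by a direct diagrammatic calculation from the multiplication rule of fused permutations. When concatenating two copies of $\sigma_0$, each ordinary middle point admits a single forced pairing of its incoming and outgoing edges, while at the fused middle ellipse one sums over the $k!$ matchings of its $k$ incoming edges ($k-1$ from the top ellipse together with one from top strand $1$) with its $k$ outgoing edges ($k-1$ to the bottom ellipse together with one to bottom strand $1$). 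A short case analysis, according to whether the incoming edge from top strand $1$ is matched to the outgoing edge to bottom strand $1$, produces $(k-1)!$ matchings yielding the identity diagram and $(k-1)\cdot(k-1)!$ matchings yielding $\sigma_0$; dividing by the normalising factor $k!$ gives the claimed identity. Together with the easier verifications above, this completes the construction of $\varphi$ and the surjectivity statement follows.
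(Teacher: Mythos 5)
Your verification of the defining relations is correct and follows essentially the same route as the paper: the diagrammatic computation $\sigma_0^2=\tfrac{1}{k}\mathbf{1}+\tfrac{k-1}{k}\sigma_0$ (hence $t^2=(k+1)t$), the commutations inherited from $\mathbb{C}\mathfrak{S}_{k+n}$, and relation \eqref{eq: DR3} obtained by subtracting the two identities of Lemma \ref{(s_1t)^2}. The case analysis of the $k!$ matchings at the middle ellipse, giving $(k-1)!$ copies of $\mathbf{1}$ and $(k-1)(k-1)!$ copies of $\sigma_0$, is exactly right.

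The surjectivity argument, however, has a genuine gap. You claim that every $P_k\omega P_k$ is reached by writing $\omega\in\mathfrak{S}_{k+n}$ as a word in the $\delta_j$ and replacing $\delta_k$ by $\sigma_0$ and $\delta_{k+i}$ by $\sigma_i$. This would require $P_k\,\delta_{j_1}\cdots\delta_{j_m}\,P_k=(P_k\delta_{j_1}P_k)\cdots(P_k\delta_{j_m}P_k)$, which fails as soon as the word contains a letter $\delta_j$ with $j\le k$ that is not adjacent to a copy of $P_k$: for instance $P_k\delta_k\delta_kP_k=P_k$ whereas $(P_k\delta_kP_k)^2=\tfrac{1}{k}P_k+\tfrac{k-1}{k}P_k\delta_kP_k$. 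In a corner algebra $eAe$ the elements $ea_ie$ need not generate even when the $a_i$ generate $A$, so the statement that $\sigma_0,\sigma_1,\dots,\sigma_{n-1}$ generate $H_{k,n}$ is precisely what must be proved, not a formal consequence of $\mathfrak{S}_{k+n}$ being generated by transpositions. The paper spends the bulk of its proof on this point: it first shows (Lemma \ref{h generate}) that the diagrams $h_i$, $i=1,\dots,\min(k,n)$, together with the $\sigma_i$ generate $H_{k,n}$, and then proves by induction on $p$ that each $h_{p+1}$ lies in $\mathrm{Im}(\varphi)$, by computing that $h_{p+1}$ occurs with the nonzero coefficient $\tfrac{k-p}{k}$ in the product $h_p\sigma_{p,1}\sigma_0$ while the remaining terms are handled by the induction hypothesis. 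Your proof needs an argument of this kind (or some substitute) to close the surjectivity step.
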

\begin{proof}
To show that the defined map is indeed an surjective algebra morphism, we will use the following two lemmas:
\begin{Lem}\label{WD relations}
The defining relations (\ref{eq: DR1})-(\ref{eq: DR4}) of the algebra $\hat{\mathcal{H}}_n^{\kappa}$ are satisfied for the image of the generators under $\varphi$.

\end{Lem}
\begin{proof}
For (\ref{eq: DR1}), we have:
\begin{align*}
\sigma_0^2=
\begin{tikzpicture}[scale=0.25,baseline={([yshift=\eseq]current bounding box.center)}]
		\ellp{0}{0}
		\ellocross{0}{0}
		\slab{(4,3)}{$1$}
		\slab{(8,3)}{$2$}
		\strand{8}{2}
		\node at (10,0) {$\dots$};
		\slab{(12,3)}{$n$}
		\strand{12}{2}
		\ellp{0}{0}
		\begin{scope}[yshift=-4cm]
			\ellp{0}{0}
			\ellocross{0}{0}
			\node at (6,2) {$\dots$};
			\strand{8}{2}
			\node at (10,0) {$\dots$};
			\strand{12}{2}
			\end{scope}			
\end{tikzpicture} &= \frac{1}{k} \begin{tikzpicture}[scale=0.25,baseline={([yshift=\eseq]current bounding box.center)}]		
		\ellk{0}{0}
		\ellstrand{0}{0}
		\slab{(4,3)}{$1$}
		\strand{4}{2}
		\node at (6,0) {$\dots$};
		\slab{(8,3)}{$n$}
		\strand{8}{2}
	\end{tikzpicture} + \frac{k-1}{k}
\begin{tikzpicture}[scale=0.3,baseline={([yshift=\eseq]current bounding box.center)}]
		\ellp{0}{0}
		\ellocross{0}{0}
		\slab{(4,3)}{$1$}
		\slab{(8,3)}{$2$}
		\strand{8}{2}
		\node at (10,0) {$\dots$};
		\slab{(12,3)}{$n$}
		\strand{12}{2}
	\end{tikzpicture}\\
	&=\frac{1}{k} \mathbf{1}+\frac{k-1}{k} \sigma_0.
\end{align*}  

Therefore, $t^2=(\mathbf{1}+k\sigma_0)^2=\mathbf{1}+2k\sigma_0+k\mathbf{1}+k(k-1)\sigma_0=(k+1)t.$

For (\ref{eq: DR2}), given $\ell=2,\ldots, n-1$, we have:
\begin{align*}
\sigma_0\sigma_{\ell}=
\begin{tikzpicture}[scale=0.25,baseline={([yshift=\eseq]current bounding box.center)}]
		\ellp{0}{0}
		\ellocross{0}{0}
		\slab{(4,3)}{$1$}
		\node at (6,0) {$\dots$};
		\slab{(8,3)}{$\ell$}
		\strand{8}{2}
		\slab{(12,3)}{$\ell+1$}
		\strand{12}{2}
		\node at (14,0) {$\dots$};
		\slab{(16,3)}{$n$}
		\strand{16}{2}
		\ellp{0}{0}
		\begin{scope}[yshift=-4cm]
			\ellk{0}{0}
		\ellstrand{0}{0}
		\strand{4}{2}
		\ocross{8}{2}
		\node at (6,0) {$\dots$};	
		\node at (14,0) {$\dots$};
		\strand{16}{2}
			\end{scope}			
\end{tikzpicture} &= \begin{tikzpicture}[scale=0.25,baseline={([yshift=\eseq]current bounding box.center)}]		
		\ellp{0}{0}
		\ellocross{0}{0}
		\slab{(4,3)}{$1$}
		\node at (6,0) {$\dots$};
		\slab{(8,3)}{$\ell$}
		\ocross{8}{2}
		\slab{(12,3)}{$\ell+1$}
		\node at (14,0) {$\dots$};
		\slab{(16,3)}{$n$}
		\strand{16}{2}
		\ellp{0}{0}
	\end{tikzpicture} \\
	&=\begin{tikzpicture}[scale=0.25,baseline={([yshift=\eseq]current bounding box.center)}]
	\ellk{0}{0}
		\ellstrand{0}{0}
		\strand{4}{2}
		\ocross{8}{2}
		\node at (6,0) {$\dots$};	
		\node at (14,0) {$\dots$};
		\strand{16}{2}
		\begin{scope}[yshift=-4cm]
			\ellk{0}{0}
		\ellocross{0}{0}
		\slab{(4,7)}{$1$}
		\node at (6,0) {$\dots$};
		\slab{(8,7)}{$\ell$}
		\strand{8}{2}
		\slab{(12,7)}{$\ell+1$}
		\strand{12}{2}
		\node at (14,0) {$\dots$};
		\slab{(16,7)}{$n$}
		\strand{16}{2}
		\ellp{0}{0}
			\end{scope}			
\end{tikzpicture}=\sigma_{\ell}\sigma_0.
\end{align*} 
Therefore, $\sigma_{\ell}t=t\sigma_{\ell}$ by definition of $t$.

Equality (\ref{eq: DR3}) follows immediatly from lemma \ref{(s_1t)^2}. Indeed, by subtracting the equations (\ref{eq:R1})-(\ref{eq:R2}), the result follows directly.

Finally, (\ref{eq: DR4}) follows from the fact that the subalgebra of $H_{k,n}$ generated by $\sigma_1,\ldots,\sigma_{n-1}$ is isomorphic to $\mathbb{C}\mathfrak{S}_n$.
\end{proof}
For $1 \leq p \leq q \leq n-1$, we denote $\sigma_{q,p} := \sigma_q\sigma_{q-1}\ldots \sigma_p$ and $\sigma^{p,q} := \sigma_p\sigma_{p+1}\ldots \sigma_q$.
\begin{Lem} \label{h generate}
The elements:
\begin{align*}
	&h_{i} :=
	\begin{tikzpicture}[scale=0.3,baseline={([yshift=\eseq]current bounding box.center)}]
		\ellU{0}{0}
		\node at (6,0) {$\dots$};
		\slab{(16,3)}{$i+1$}
		\strand{16}{2}
		\node at (18,0) {$\dots$};
		\slab{(20,3)}{$n$}
		\strand{20}{2}
		\slab{(4,3)}{$1$}
		\slab{(8,3)}{$i-1$}
		\slab{(12,3)}{$i$}		
	\end{tikzpicture}, \quad i=1,2,\dots,\min (k,n),
\end{align*}
together with the elements $(\sigma_i)_{i=1,\ldots ,n-1}$ generate $H_{k,n}$.
\end{Lem}

\begin{proof}
It suffices to show that any diagram is generated by these elements, since diagrams generate $H_{k,n}$.

For a given diagram, let $i=1,2,\ldots ,\min (k,n)$ be the number of edges connecting the top ellipse (respectively the bottom) to points at the bottom (respectively top).

Then, left (respectively right) multiplication of $h_i$ by the $(\sigma_i)_{i=1,\ldots ,n-1}$ permutes the endpoints at the top (respectively bottom) of these $i$ edges among the $n$ points, hence producing any diagram of this form.

Finally, any diagram having $k$ parallel edges connecting the top and bottom ellipses is generated by the $(\sigma_i)_{i=1,\ldots ,n-1}$, which gives the result.
\end{proof}

In view of Lemma \ref{WD relations}, the map $\varphi$ is indeed an algebra morphism. Let us now show that the map is surjective. To do this, we need to show that the generators of $H_{k,n}$ lie in the image of $\varphi$. Note that for $i=0$, $h_0=1$ and for $i=1$, $h_1=\sigma_0$. By Lemma \ref{h generate}, it remains to show that the elements $(h_i)_{i=1,\ldots,\min (k,n)}$ belong to the image of $\varphi$. We prove this by induction on $i \geq 1$.

For $i=1$, $h_1=\varphi\left(\frac{1}{k}\left(t-1\right)\right)$.

Assume that for some $p\in \{1,\ldots,\min (k,n)-1\}$, the elements $h_i$ belong to the image of $\varphi$ for $i=1,\ldots,p$. We now show that $h_{p+1}$ also lies in the image. We have:
\begin{center}
\begin{tikzpicture}[scale=0.3]
\node at (-4,0) {$\sigma_{p,1}\sigma_0=$};
\ellp{0}{0}
\draw[thick] (12,2)..controls +(0,-3) and +(0,3) .. (1.2,-2);
		\draw[thick] (8,2)..controls +(0,-3) and +(0,3) .. (12,-2);	
		\strand{20}{2}
		\node at (18,0) {$\dots$};
		\fill (4,2) ellipse (0.2cm and 0.2cm);\fill (4,-2) ellipse (0.2cm and 0.2cm);
		\fill (8,2) ellipse (0.2cm and 0.2cm);\fill (8,-2) ellipse (0.2cm and 0.2cm);
		\fill (12,2) ellipse (0.2cm and 0.2cm);\fill (12,-2) ellipse (0.2cm and 0.2cm);
		\draw[thick] (4,2)..controls +(0,-1)  .. (5,0.7);
		\draw[thick] (8,-2)..controls +(0,1)  .. (7,-0.7);	
		\node at (6,0.5) {$\dots$};
		\draw[thick] (1.2,2)..controls +(0,-3) and +(0,3) .. (4,-2);	
		\strand{16}{2}
				\slab{(16,3)}{$p+1$}
		\slab{(20,3)}{$n$}
		\slab{(4,3)}{$1$}
		\slab{(8,3)}{$p-1$}
		\slab{(12,3)}{$p$}	
		\node at (22,0) {$.$};
\end{tikzpicture}
\end{center}

Then we have:
\begin{center}
\begin{tikzpicture}[scale=0.3,baseline={([yshift=\eseq]current bounding box.center)}]
	\node at (-6,-2){$h_p\sigma_{p,1}\sigma_0=$};
		\ellU{0}{0}
		\node at (6,0) {$\dots$};
		\slab{(16,3)}{$p+1$}
		\strand{16}{2}
		\node at (18,0) {$\dots$};
		\slab{(20,3)}{$n$}
		\strand{20}{2}
		\slab{(4,3)}{$1$}
		\slab{(8,3)}{$p-1$}
		\slab{(12,3)}{$p$}			
	\begin{scope}[yshift=-4cm]
			\ellp{0}{0}
\draw[thick] (12,2)..controls +(0,-3) and +(0,3) .. (1.2,-2);
		\draw[thick] (8,2)..controls +(0,-3) and +(0,3) .. (12,-2);	
		\strand{20}{2}
		\node at (18,0) {$\dots$};
		\fill (4,2) ellipse (0.2cm and 0.2cm);\fill (4,-2) ellipse (0.2cm and 0.2cm);
		\fill (8,2) ellipse (0.2cm and 0.2cm);\fill (8,-2) ellipse (0.2cm and 0.2cm);
		\fill (12,2) ellipse (0.2cm and 0.2cm);\fill (12,-2) ellipse (0.2cm and 0.2cm);
		\draw[thick] (4,2)..controls +(0,-1)  .. (5,0.7);
		\draw[thick] (8,-2)..controls +(0,1)  .. (7,-0.7);	
		\node at (6,0.5) {$\dots$};
		\draw[thick] (1.2,2)..controls +(0,-3) and +(0,3) .. (4,-2);	
		\strand{16}{2}
			\end{scope}
		\node at (22,-2) {$.$};	
\end{tikzpicture}
\end{center}
The multiplication of the two diagrams right above provides a linear combination of $k!$ diagrams.

Among them, the element $h_{p+1}$ appears when the only edge in the bottom diagram in the multiplication right above which join the top ellipse to the first bottom dot is linked to one of the $k-p$ parallel vertical edges joining the two different ellipses of the top diagram. This amounts to have an element $\pi\in \mathfrak{S}_k$ such that $\pi^{-1}(k)\in\lbrace 1,\dots ,k-p\rbrace$. The cardinality of this subset of elements of $\mathfrak{S}_k$ is:
\[\lvert \lbrace \pi\in \mathfrak{S}_k\mid \pi^{-1}(k)\in \lbrace 1,\dots ,k-p\rbrace\rbrace\rvert=(k-p)(k-1)!.\]

Thus, the element $h_{p+1}$ appears with non-zero coefficient, namely:
\[
\frac{(k-p)(k-1)!}{k!}=\frac{k-p}{k}.\]

The others ones are the $(\sigma^{j,p}h_p\sigma_{p,1})_{j=1,\ldots,p}$, each appearing with the same non-zero coefficient, namely $k^{-1}$.

By the induction hypothesis, the sum of these latter elements admits a preimage under $\varphi$; let us denote it by $\omega$ $\left(\emph{i.e.}~ \varphi(\omega)=\sum_{j=1}^p\sigma^{j,p}h_p\sigma_{p,1}\right)$. Then we have:
\[
h_{p+1}=\frac{1}{k-p}\varphi\left(\left(h_p\sigma_{p,1}(t-1)-\omega\right)\right),
\]
and therefore $h_{p+1}\in \mathrm{Im}(\varphi)$, an antecedent being given by $\frac{1}{k-p}\left(h_p\sigma_{p,1}(t-1)-\omega\right)\in \hat{\mathcal{H}}_n^\kappa$.

This concludes the proof.
\end{proof}
\begin{Rem}
From the proof of Lemma \ref{WD relations}, we deduce the characteristic equation satisfied by $\sigma_0$ in $H_{k,n}$:
\begin{equation} \label{charac sigma}
\left(\sigma_0-1\right)\left(\sigma_0+\frac{1}{k}\right)=0,
\end{equation}
which algebraically corresponds to (in $\mathcal{H}_{k,n}$):
\begin{equation}
\left(P_k\delta_kP_k-P_k\right)\left(P_k\delta_kP_k+\frac{1}{k}P_k\right)=0.
\end{equation}
It is easy to see that, when $2\leq n\leq k$, there are three one-dimensional representations for $H_{k,n}$. They are given by the following partitions:
\begin{equation} \label{1rep FP}
\lambda=(k+n) :\begin{array}{l} \sigma_0\mapsto 1\,,\\
\sigma_i\mapsto 1\,, \end{array}~~
\lambda=(k,1^n) :\begin{array}{l} \sigma_0\mapsto -\frac{1}{k}\,,\\
\sigma_i\mapsto -1\,,\end{array} ~~
\lambda=(k,n) :\begin{array}{l} \sigma_0\mapsto -\frac{1}{k}\,,\\
\sigma_i\mapsto 1\,,\end{array}
\end{equation}
where we give the associated values of the elements $\sigma_0$ and $\sigma_i$ ($i\geq 1$). These are easily obtained from the description in \cite{CP}.

In particular, the eigenvalues of $\sigma_0$ which appear in the characteristic equation \eqref{charac sigma}  explain its values given in the one-dimensional representations of $H_{k,n}$ defined in \eqref{1rep FP}.

When $n>k$, there are only two remaining one-dimensional representations, the one corresponding to $\lambda=(k,n)$ being removed from the list (it would not make sense for $n>k$).
\end{Rem}

By Proposition \ref{quotient}, we have an algebra isomorphism:
\[
\hat{\mathcal{H}}_n^{\kappa} / Ker(\varphi) \cong H_{k,n}.
\]
The purpose of what follows is to understand the kernel of $\varphi$, to give a precise description of it, and to deduce:
\begin{enumerate}
\item[•] A canonical basis of $H_{k,n}$.
\item[•] A presentation of $H_{k,n}$ by generators and relations.
\end{enumerate}
\subsection{Case where $n\leq k$}

We recall the expressions of the four central primitive idempotents of $\hat{\mathcal{H}}_n^{\kappa}$ in the standard basis $\lbrace s_\omega\rbrace_{\omega\in B_n}$, see (\ref{idempotent formula}).

\begin{Def}
We define the algebra $\mathcal{A}_n^{\kappa}$ as the quotient of $\hat{\mathcal{H}}_n^{\kappa}$ by the relation:
\begin{equation} \label{(x_1s_1)^2 general}
	(x_1s_1)^2=\kappa_1(1+\kappa_1)(s_1-1)+\kappa_1(s_1x_1s_1-s_1x_1)+(1+\kappa_1)(x_1-x_1s_1)+x_1s_1x_1.
\end{equation}

It is understood that $\mathcal{A}_n^{\kappa}=\hat{\mathcal{H}}_n^{\kappa}$ if $n=0,1$.
\end{Def}
\begin{Rem}
If $\kappa_2-\kappa_1\neq 0,1$, using the explicit expression of $F_2^{(-1,\kappa_2)}$ (see Example \ref{Exe F_2}), this is equivalent to imposing the following relation in $\hat{\mathcal{H}}_n^\kappa$:
\begin{equation}
F_2^{(-1,\kappa_2)}=0,
\end{equation}
where $F_2^{(-1,\kappa_2)}$ is seen as an element of $\hat{\mathcal{H}}_n^{\kappa}$.
\end{Rem}

\paragraph{Artin-Wedderburn decomposition for $\mathcal{A}_n^{\kappa}$.}

We assume here that the condition \eqref{SS H_n} is satisfied.

The element $F_m^{(\alpha_1,\alpha_2)}$ in $\hat{\mathcal{H}}_m^{\kappa}$ (see \eqref{PCI}) is the minimal central idempotent corresponding to the one-dimensional representation associated to $(\alpha_1,\alpha_2)$ (see \eqref{1dimrep}). It means that it is non-zero in this one-dimensional representation of $\hat{\mathcal{H}}_n^{\kappa}$ and acts as $0$ in any other irreducible representation. Now if $n\geq m$, it follows that $F_m^{(\alpha_1,\alpha_2)}$ seen as an element of $\hat{\mathcal{H}}_n^{\kappa}$ is non-zero in an irreducible representation if and only if this irreducible representation contains in its restriction to $\hat{\mathcal{H}}_m^{\kappa}$ the given one-dimensional representation. Therefore the quotient by $F_m^{(\alpha_1,\alpha_2)}=0$ removes exactly these irreducible representations.

In the particular case of $\mathcal{A}_n^{\kappa}$, which is the quotient of $\hat{\mathcal{H}}_n^{\kappa}$ by the relation $F_2^{(-1,\kappa_2)}=0$, we recall the indexing of one-dimensional representations set up in (\ref{1dimrep}), and we find that the disappearing representations are those $V_{(\lambda,\mu)}$ with $\mu$ having at least two non-empty rows. 
Then, the algebra $\mathcal{A}_n^{\kappa}$ is semisimple with the following set of irreducible representations:
\[Irr(\mathcal{A}_n^{\kappa})=\{V_{(\lambda,\mu)}\ |\ (\lambda,\mu)\in Par_2(n)\ \text{and}\ \ell(\mu)<2\}\ .\]
The Bratteli diagram for the algebras $\mathcal{A}_n^{\kappa}$ is obtained from the Bratteli diagram for the algebras $\hat{\mathcal{H}}_n^{\kappa}$, where all bipartitions with more than one row in the second component are removed:
\begin{center}
\begin{tikzpicture}[scale=0.19]
\node at (0,0) {$(\emptyset,\emptyset)$};
\node at (21,0) {$\dim(\mathcal{A}_0^{\kappa})=1$};

\draw[thick] (-0.5,-1.5) -- (-5.5,-3.5);
\draw[thick] (0.5,-1.5) -- (5.5,-3.5);

\node at (-8,-5) {$($};\diag{-7.5}{-4.5}{1};\node at (-5,-5) {$, \emptyset)$};
\node at (5,-5) {$(\emptyset ,$};\diag{6.5}{-4.5}{1};\node at (8,-5) {$)$};
\node at (21,-5) {$\dim(\mathcal{A}_1^{\kappa})=2$};

\draw[thick] (-7.5,-6.5) -- (-17.5,-9.5);
\draw[thick] (-6,-6.5) -- (-8.5,-9.5);
\draw[thick] (-4.5,-6.5) -- (-1.5,-9.5);
\draw[thick] (4.5,-6.5) -- (1.5,-9.5);
\draw[thick] (6,-6.5) -- (8.5,-9.5);

\node at (-20,-11) {$($};\diag{-19.5}{-10.5}{2};\node at (-16,-11) {$, \emptyset)$};
\node at (-10,-11) {$($};\diagg{-9.5}{-10.5}{1}{1};\node at (-7,-11) {$,\emptyset)$};
\node at (-2,-11) {$($};\diag{-1.5}{-10.5}{1};\node at (0,-11) {$,$};\diag{0.5}{-10.5}{1};\node at (2,-11) {$)$};
\node at (7,-11) {$(\emptyset ,$};\diag{8.5}{-10.5}{2};\node at (11,-11) {$)$};
\node at (21,-11) {$\dim(\mathcal{A}_2^{\kappa})=7$};

\draw[thick] (-19.5,-12.5) -- (-36.5,-15.5);
\draw[thick] (-17.5,-12.5) -- (-29,-15.5);
\draw[thick] (-15.5,-12.5) -- (-13,-15.5);

\draw[thick] (-10.2,-12.7) -- (-26,-15.5);
\draw[thick] (-7.6,-12.8) -- (-19.5,-15.5);
\draw[thick] (-6.5,-12.5) -- (-5.5,-15.5);

\draw[thick] (-1.5,-12.5) -- (-11,-15.5);
\draw[thick] (-0.5,-12.5) -- (-4,-15.5);
\draw[thick] (0.5,-12.5) -- (2,-15.5);

\draw[thick] (6.5,-12.5) -- (3.5,-15.5);
\draw[thick] (8.5,-12.5) -- (12,-15.5);

\node at (-39,-17) {$($};\diag{-38.5}{-16.5}{3};\node at (-34,-17) {$, \emptyset)$};
\node at (-30,-17) {$($};\diagg{-29.5}{-16.5}{2}{1};\node at (-26,-17) {$, \emptyset)$};
\node at (-22,-17) {$($};\diaggg{-21.5}{-16.5}{1}{1}{1};\node at (-19,-17) {$, \emptyset)$};
\node at (-15,-17) {$($};\diag{-14.5}{-16.5}{2};\node at (-12,-17) {$,$};\diag{-11.5}{-16.5}{1};\node at (-10,-17) {$)$};
\node at (-7,-17) {$($};\diagg{-6.5}{-16.5}{1}{1};\node at (-5,-17) {$,$};\diag{-4.5}{-16.5}{1};\node at (-3,-17) {$)$};
\node at (1,-17) {$($};\diag{1.5}{-16.5}{1};\node at (3,-17) {$,$};\diag{3.5}{-16.5}{2};\node at (6,-17) {$)$};
\node at (10,-17) {$(\emptyset ,$};\diag{11.5}{-16.5}{3};\node at (15,-17) {$)$};
\node at (21.5,-17) {$\dim(\mathcal{A}_3^{\kappa})=34$};
\end{tikzpicture}
\end{center}
The dimension of $\mathcal{A}_n^{\kappa}$ can be easily calculated, by summing the squares of the dimensions of the irreducible representations:
\begin{equation}\label{eq: dim A_n}
\dim(\mathcal{A}_n^{\kappa})=\sum_{i=0}^n\sum_{\lambda\vdash n-i}(\dim V_{(\lambda,(i))})^2=\sum_{i=0}^n\binom{n}{i}^2\sum_{\lambda\vdash n-i}d_{\lambda}^2=\sum_{i=0}^n\binom{n}{i}^2(n-i)!\ ,
\end{equation}
where we first split the sum according to the size of the second partition $\mu$, which must be a single line of $i$ boxes, and then we use successively \eqref{dimbipart} and \eqref{dimfactn}. The dimensions for $n=0,1,2,3$ are written in the diagram above.
\paragraph{Avoiding words in $B_n$.}
A word $b = b_1 b_2 \ldots b_n$ of $B_n$ is said to be  $\bar{1}\bar{2}$-avoiding if all barred numbers in $b$ appear in decreasing order, see \cite{JS}. For instance, $35\bar{6}\bar{4}\bar{2}$ is $\bar{1}\bar{2}$-avoiding in $B_6$ while $35\bar{4}1\bar{6}\bar{2}$ is not because of the subsequence  
$\bar{4} \bar{6}$.

We will denote by $B_n(\bar{1}\bar{2})$ the subset of all signed permutations in $B_n$ which are  $\bar{1}\bar{2}$-avoiding. A word $b = b_1b_2\ldots b_n$ corresponding to a permutation in $B_n(\bar{1}\bar{2})$ can be written as follows: choose $i$ numbers in $\lbrace 1, 2, \ldots , n\rbrace$ that will be barred, choose $i$ positions among $n$ to place these barred numbers in decreasing order in $b$, and then permute the remaining $n - i$ numbers in the remaining $n - i$ positions in $b$. It follows that:
\begin{equation}\label{Bn12}
\lvert B_n(\bar{1}\bar{2})\rvert=\sum_{i=0}^{n}(n - i)! \binom{n}{i}^2 .
\end{equation}
\begin{Pro}\label{base A_n} Let $\kappa\in \mathbb{C}^2$.

The set $\{ s_\omega \mid \omega \in B_n(\bar{1}\bar{2}) \}$ is a spanning set of $\mathcal{A}_n^{\kappa}$.
\end{Pro}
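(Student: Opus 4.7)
The plan is to induct on $n$, mirroring the right-multiplication strategy used in the proof of Proposition \ref{standard}. The cases $n=0,1$ are immediate since $B_n(\bar{1}\bar{2})=B_n$ and $\mathcal{A}_n^{\kappa}=\hat{\mathcal{H}}_n^{\kappa}$ in this range. For the inductive step, I first observe that the lemma used inside the proof of Proposition \ref{standard} invokes only braid, commutation, and the quadratic relation on $x_1$, all of which persist in $\mathcal{A}_n^{\kappa}$. Hence every element of $\mathcal{A}_n^{\kappa}$ is a linear combination of products $\omega\cdot[n-1,j]$ with $\omega\in\mathcal{A}_{n-1}^{\kappa}$ and $|j|\le n-1$; by the inductive hypothesis one may take $\omega=s_{\omega'}$ for some $\omega'\in B_{n-1}(\bar{1}\bar{2})$. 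The task thus reduces to expressing each product $s_{\omega'}\cdot[n-1,j]$ in the span of $\{s_{\omega''}:\omega''\in B_n(\bar{1}\bar{2})\}$.

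By construction of the elements $s_\omega$ from the standard form \eqref{group set}, one has $s_{\omega'}\cdot[n-1,j]=s_{\omega'\cdot[n-1,j]}$ in $\hat{\mathcal{H}}_n^{\kappa}$, so when the signed permutation $\omega'\cdot[n-1,j]$ lies in $B_n(\bar{1}\bar{2})$ the conclusion is immediate. A direct inspection of how $[n-1,j]$ --- which permutes the existing bars of $\omega'$ when $j\ge 1$, and additionally introduces a new bar when $j\le 0$ --- interacts with the decreasing bar sequence of $\omega'$ singles out precisely the configurations where $\omega'\cdot[n-1,j]\notin B_n(\bar{1}\bar{2})$. The key observation driving the argument is that the quotient relation \eqref{(x_1s_1)^2 general} is exactly the identity expressing $s_{(\tau_0\tau_1)^2}=(x_1s_1)^2$, the Hecke avatar of the unique non-$\bar{1}\bar{2}$-avoiding element $\bar{1}\bar{2}\in B_2$, as a linear combination of $s_\nu$ for the seven elements $\nu\in B_2(\bar{1}\bar{2})$. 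In a problematic configuration, the reduction strategy will be to locate the offending pair of bars in $\omega'\cdot[n-1,j]$, to use braid and commutation relations to conjugate \eqref{(x_1s_1)^2 general} into that position, and to rewrite.

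The main obstacle, and the most technically demanding step, will be the combinatorial bookkeeping in this last stage: I will need to verify that the braid manipulations required to bring the offending pattern into the position where \eqref{(x_1s_1)^2 general} directly applies do not themselves create new $\bar{1}\bar{2}$ violations, and that each rewriting strictly decreases a well-chosen complexity measure --- for instance the lexicographic pair consisting of the number of $\bar{1}\bar{2}$ pattern occurrences together with the length in the standard form --- so that the reduction procedure terminates with a linear combination of $s_{\omega''}$ for $\omega''\in B_n(\bar{1}\bar{2})$. Designing this complexity measure so that termination is guaranteed while covering all the combinatorial cases (particularly when the new bar from $[n-1,j]$ lands in the interior of the bar sequence of $\omega'$, rather than at its end) is where the bulk of the combinatorial effort of the argument will lie.
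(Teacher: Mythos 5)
Your setup is sound and genuinely different from the paper's: you induct on $n$ via the spanning lemma inside Proposition \ref{standard}, and you correctly note that the normal form \eqref{group set} gives $s_{\omega'}[n-1,j]=s_{\nu}$ for $\nu=\omega'\cdot[n-1,j]$, so the whole problem reduces to rewriting $s_\nu$ when $\nu$ fails to be $\bar{1}\bar{2}$-avoiding. (The paper instead inducts directly on the length $\ell(\omega)$ of elements of $B_n$, starting from the fact that $\{s_\omega\mid\omega\in B_n\}$ spans the quotient.) However, the core of the argument --- the rewriting step --- is not carried out: you explicitly defer the ``combinatorial bookkeeping,'' the design of a terminating complexity measure, and the verification that conjugating \eqref{(x_1s_1)^2 general} into the position of an offending bar pair does not create new violations. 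This is a genuine gap, not a routine verification. The relation \eqref{(x_1s_1)^2 general} concerns $x_1$ and $s_1$ specifically, i.e.\ the pattern sitting at positions $1,2$; moving a violation from deep inside a word down to that position must be done at the level of reduced expressions, and by Remark \ref{Rem braid} the element $s_\omega$ is \emph{not} invariant under the type-$B$ braid move, so every such manipulation produces lower-order correction terms that must themselves be controlled. Nothing in your proposal guarantees that this process terminates or that the corrections stay within the inductive reach.

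The missing ingredient is exactly what the paper supplies: by \cite[Theorem 2.3]{JS}, an element $\omega\notin B_n(\bar 1\bar 2)$ admits a reduced expression containing $(\tau_0\tau_1)^2$; by Tits/Matsumoto connectivity of reduced expressions (\cite[\S IV.1.5]{B}), one passes from the normal form to such an expression by braid moves; type-$A$ moves leave $s_\omega$ unchanged, while each type-$B$ move changes $s_\omega$ only by terms of strictly smaller length; and finally \eqref{(x_1s_1)^2 general} rewrites the resulting $(x_1s_1)^2$ as a combination of strictly shorter words. This packages all of your deferred bookkeeping into a single induction on $\ell(\omega)$ with a ready-made termination measure. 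Either import this argument (at which point your induction on $n$ becomes unnecessary scaffolding) or actually construct the pattern-by-pattern reduction you outline --- but as written, the proposal proves only the easy reduction to the single question it leaves open.
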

\begin{proof}
Since the algebra $\mathcal{A}_n^{\kappa}$ is a quotient of $\hat{\mathcal{H}}_n^{\kappa}$, the set $\{ s_\omega \mid \omega \in B_n \}$ spans $\mathcal{A}_n^{\kappa}$. We will show that this set can be reduced to $\{ s_\omega \mid \omega \in B_n(\bar{1}\bar{2}) \}$.

We proceed by induction on the length of elements in $B_n$.

The only element $\omega$ in $B_n$ such that $\ell(\omega) < 4$ which has more than one barred integer among $\{1,2,\ldots,n\}$ is $\tau_0\tau_1\tau_0$, which corresponds to the word $\bar{2}\bar{1}$.
Indeed, the number of barred integers of an element $\omega$ is equal to $\ell_0(\omega)$. Thus, considering the set \eqref{group set} of the elements of $B_n$, all the other elements such that $\ell(\omega)<4$ have at most one barred number.

Consequently, any such element is $\bar{1}\bar{2}$-avoiding, and so $s_\omega \in \{ s_\delta \mid \delta \in B_n(\bar{1}\bar{2}) \}$. 

Now suppose that any element $s_\omega$ of $\mathcal{A}_n^{\kappa}$ with $\ell(\omega) \leq p$ for some $p \geq 3$ belongs to $\mathrm{Vect} \{ s_\omega \mid \omega \in B_n(\bar{1}\bar{2}) \}$. Consider an element $s_\omega$ with $\ell(\omega) = p+1$ such that $\omega \notin B_n(\bar{1}\bar{2})$. Then $\omega$ necessarily contains $(\tau_0 \tau_1)^2$ in some reduced expression (see \cite[Theorem 2.3]{JS}).

It is well-known that any reduced expression of an element of the group $B_n$ can be obtained from
any other by means of braid relations \eqref{R2 Bn} (see \emph{e.g.} \cite[\textsection{IV}.1.5]{B}).

The use of the braid relations of type $A$ does not provide a different element in $\hat{\mathcal{H}}_n^\kappa$ while the braid relation $(x_1s_1)^2=(s_1x_1)^2$ provides in $\hat{\mathcal{H}}_n^\kappa$ a linear combination of an element that contains $(x_1s_1)^2$ and other elements of length less than or equal to $p$ (see Remark \ref{Rem braid}).

Thus, it remains to study an element of length $p+1$ that contains $(x_1s_1)^2$, that we note $\Lambda$, the other elements appearing being of length less than or equal to $p$, and then lie in $Vect\lbrace s_\omega \mid \omega\in B_n(\bar{1}\bar{2})\rbrace$ by the recurrence hypothesis

But in $\mathcal{A}_n^{\kappa}$, the relation \eqref{(x_1s_1)^2 general} allows us to express $\Lambda$ as a linear combination of terms of length less than or equal to $p$. By the induction hypothesis, it follows that $s_\omega \in \mathrm{Vect} \{ s_\delta \mid \delta \in B_n(\bar{1}\bar{2}) \}$.

Therefore, this family generates the algebra $\mathcal{A}_n^{\kappa}$.
\end{proof}

\begin{Cor}\label{An basis}
Assume that we are in the condition \eqref{SS H_n}.

Then the set $\{ s_\omega \mid \omega \in B_n(\bar{1}\bar{2}) \}$ is a basis of $\mathcal{A}_n^{\kappa}$.
\end{Cor}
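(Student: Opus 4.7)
The plan is to upgrade the spanning property already established in Proposition \ref{base A_n} to a basis by a dimension count. Since $\mathcal{A}_n^{\kappa}$ is finite dimensional (as a quotient of $\hat{\mathcal{H}}_n^{\kappa}$), a spanning set whose cardinality equals $\dim(\mathcal{A}_n^{\kappa})$ is automatically linearly independent.

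First, I invoke Proposition \ref{base A_n} to obtain that $\{s_\omega\mid\omega\in B_n(\bar{1}\bar{2})\}$ spans $\mathcal{A}_n^{\kappa}$. It therefore suffices to show that
\[
\lvert B_n(\bar{1}\bar{2})\rvert = \dim(\mathcal{A}_n^{\kappa}).
\]
Next, under the semisimplicity hypothesis \eqref{SS H_n}, the Artin--Wedderburn decomposition of $\mathcal{A}_n^{\kappa}$ was computed just above the statement: the set $Irr(\mathcal{A}_n^{\kappa})$ consists of the representations $V_{(\lambda,\mu)}$ with $\ell(\mu)<2$, and summing squared dimensions as in \eqref{eq: dim A_n} yields
\[
\dim(\mathcal{A}_n^{\kappa})=\sum_{i=0}^n\binom{n}{i}^2 (n-i)!.
\]
On the other hand, by \eqref{Bn12},
\[
\lvert B_n(\bar{1}\bar{2})\rvert=\sum_{i=0}^n (n-i)!\binom{n}{i}^2.
\]
The two expressions coincide term by term, hence the spanning set of Proposition \ref{base A_n} has cardinality exactly $\dim(\mathcal{A}_n^{\kappa})$, and is therefore a basis.

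There is essentially no obstacle: all the nontrivial content has already been established. The spanning step (Proposition \ref{base A_n}) is where the defining relation \eqref{(x_1s_1)^2 general} and Matsumoto-type reduction were used to eliminate non-$\bar{1}\bar{2}$-avoiding words; the semisimple Artin--Wedderburn decomposition handles the dimension side. The only thing to check is the purely combinatorial identity between \eqref{eq: dim A_n} and \eqref{Bn12}, which is immediate.
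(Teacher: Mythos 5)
Your proof is correct and follows exactly the paper's own argument: spanning from Proposition \ref{base A_n}, then a dimension count matching \eqref{eq: dim A_n} against \eqref{Bn12} under the semisimplicity hypothesis \eqref{SS H_n}. Nothing to add.
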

\begin{proof}
The set $\{ s_\omega \mid \omega \in B_n(\bar{1}\bar{2}) \}$ is a spanning set of $\mathcal{A}_n^\kappa$ from Proposition \ref{base A_n}. Furthermore, the condition \eqref{SS H_n} allows us to have the dimension of the algebra $\mathcal{A}_n^\kappa$ \eqref{eq: dim A_n}, which is equal to the number of elements of $B_n(\bar{1}\bar{2})$ \eqref{Bn12}.
\end{proof}
\begin{Rem}
More generally, assume that $\kappa_1,\kappa_2$ are indeterminates and that we work over the ring $\mathbb{C}[\kappa_1,\kappa_2]$. Then Proposition \ref{base A_n} is valid without change, and using the semisimplicity over $\mathbb{C}(\kappa_1,\kappa_2)$, we can prove as in Corollary \ref{An basis} that $\mathcal{A}_n^{\kappa}$ is free over $\mathbb{C}[\kappa_1,\kappa_2]$ with basis given by $\lbrace s_\omega \mid \omega \in B_n(\bar{1}\bar{2})\rbrace$.
\end{Rem}

\paragraph{The particular case $\kappa=(0,k+1)$.}
Let $n\in \mathbb{Z}_{\geq 0}$ and $k\geq 1$ such that $n\leq k$.

We are going to work with the specialisation \eqref{specialisation} of the parameters $(\kappa_1,\kappa_2)$, namely:
\[\kappa_1=0,\quad \kappa_2=k+1.\]
This is motivated by Proposition 4.3 since these two values are the eigenvalues of the element $t$.

One sees that the condition $n\leq k$ implies the semisimplicity of the algebra $\hat{\mathcal{H}}_n^{(0,k+1)}$ (see \eqref{SS H_n}).

Recall the application $\varphi$ defined at Proposition \ref{quotient}.
\begin{The} \label{An iso}
Assume $n\leq k$. Then the algebra $H_{k,n}$ is isomorphic to $\mathcal{A}_n^{(0,k+1)}$.

Equivalently, the kernel of the application $\varphi$ is the two-sided ideal generated by the idempotent $F_2^{(-1,k+1)}$.
\end{The}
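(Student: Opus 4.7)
The plan is to show that $\varphi$ descends to a morphism from $\mathcal{A}_n^{(0,k+1)}$ to $H_{k,n}$ and then verify that this induced morphism is an isomorphism via dimension count. The argument relies almost entirely on the explicit formula for $F_2^{(-1,k+1)}$ from Example \ref{Exe F_2}, on Lemma \ref{(s_1t)^2}, and on the two dimension computations (\ref{eq: dim H_{k,n}}) and (\ref{eq: dim A_n}).

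The first step is to verify that $\varphi\bigl(F_2^{(-1,k+1)}\bigr)=0$ in $H_{k,n}$. From Example \ref{Exe F_2} we have
\[
F_2^{(-1,k+1)} = \frac{1}{2k(k+1)}\bigl(x_1 s_1 + (x_1 s_1)^2 - x_1 - x_1 s_1 x_1\bigr),
\]
so that, using $\varphi(x_1)=t$ and $\varphi(s_1)=\sigma_1$,
\[
\varphi\bigl(F_2^{(-1,k+1)}\bigr) = \frac{1}{2k(k+1)}\bigl(t\sigma_1 + (t\sigma_1)^2 - t - t\sigma_1 t\bigr).
\]
By equation (\ref{eq:R1}) of Lemma \ref{(s_1t)^2}, $(t\sigma_1)^2 = t\sigma_1 t - t\sigma_1 + t$, and substituting gives $0$. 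Hence the two-sided ideal $\langle F_2^{(-1,k+1)}\rangle$ is contained in $\ker(\varphi)$, and $\varphi$ factors through a surjective morphism $\overline{\varphi}:\mathcal{A}_n^{(0,k+1)}\twoheadrightarrow H_{k,n}$.

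The second step is the dimension count. The hypothesis $n\le k$ implies $\kappa_1-\kappa_2=-(k+1)\notin\{0,\pm 1,\dots,\pm(n-1)\}$, so condition (\ref{SS H_n}) is satisfied and the formulas of Section 4.2 apply. By Corollary \ref{An basis} (or equivalently by the dimension formula (\ref{eq: dim A_n})),
\[
\dim\mathcal{A}_n^{(0,k+1)} = \sum_{i=0}^n \binom{n}{i}^2(n-i)!,
\]
while by Proposition (\ref{eq: dim H_{k,n}}), since $\min(k,n)=n$,
\[
\dim H_{k,n} = \sum_{i=0}^n \binom{n}{i}^2(n-i)!.
\]
A surjection between finite-dimensional algebras of equal dimension is an isomorphism, so $\overline{\varphi}$ is an isomorphism. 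The equivalent formulation $\ker(\varphi)=\langle F_2^{(-1,k+1)}\rangle$ follows immediately.

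The main conceptual step is the first one (the verification $\varphi(F_2^{(-1,k+1)})=0$), but thanks to the shape of the relation (\ref{eq:R1}) already established, this reduces to a one-line computation. An alternative, more conceptual argument goes through Proposition \ref{quo}: via the Artin--Wedderburn decomposition, the ideal $\langle F_2^{(-1,k+1)}\rangle$ corresponds exactly to the set of bipartitions $(\lambda^{(1)},\lambda^{(2)})$ whose restriction to $\hat{\mathcal{H}}_2^{(0,k+1)}$ contains the one-dimensional representation indexed by $(\emptyset,(1,1))$, that is, those with $\ell(\lambda^{(2)})\ge 2$; this is precisely the set $L_{k,n}$ of Proposition \ref{quo}, so the two quotients coincide. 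However, the dimension-count approach outlined above is more direct and avoids checking compatibility between the abstract isomorphism of Proposition \ref{quo} and the concrete map $\varphi$.
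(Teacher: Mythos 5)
Your proposal is correct and follows essentially the same route as the paper: check that the defining relation of $\mathcal{A}_n^{(0,k+1)}$ (equivalently, the vanishing of $F_2^{(-1,k+1)}$) holds for the images $t,\sigma_1$ via Lemma \ref{(s_1t)^2}, so that $\varphi$ descends to a surjection, and then conclude by comparing the dimension formulas \eqref{eq: dim A_n} and \eqref{eq: dim H_{k,n}} under the hypothesis $n\leq k$. The only cosmetic difference is that you verify $\varphi(F_2^{(-1,k+1)})=0$ from the explicit expression in Example \ref{Exe F_2}, while the paper works directly with the relation \eqref{(x_1s_1)^2 general}; these are equivalent since $\kappa_2-\kappa_1=k+1\neq 0,1$.
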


\begin{proof}
We saw that the following unital algebra homomorphism is surjective:
\[
\begin{array}{cccc}
\varphi : & \hat{\mathcal{H}}^{(0,k+1)}_n &\longrightarrow & H_{k,n} \\
          & s_i &\longmapsto & \sigma_i,\\
          & x_1 &\longmapsto & t,
\end{array}
\]
for $i = 1, \ldots, n-1$.

By Lemma \ref{(s_1t)^2}, we can take the quotient and obtain an algebra morphism, still denoted $\varphi : \mathcal{A}_n^{(0,k+1)} \rightarrow H_{k,n}$. Taking the quotient does not affect surjectivity, so this morphism remains surjective.

Finally, assuming $n \leq k$, the dimension of $\mathcal{A}_n^{(0,k+1)}$ (see \eqref{eq: dim A_n}) is equal to $\dim(H_{k,n})$ (see \eqref{eq: dim H_{k,n}}) and then, in particular, is bounded above by the dimension of $H_{k,n}$, which proves the result. Then, $\varphi$ is injective. 

Hence it is an algebra isomorphism.
\end{proof}

\begin{Cor}\label{basis Hkn}
Assume $n \leq k$.

The set $\{ \varphi(s_\omega) \mid \omega \in B_n(\bar{1}\bar{2}) \}$ is a basis of the algebra $H_{k,n}$.
\end{Cor}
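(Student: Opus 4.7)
The plan is to observe that this corollary is an immediate consequence of combining Theorem \ref{An iso} with Corollary \ref{An basis}, so the work is just to check that the hypotheses of both results are satisfied under the assumption $n\leq k$.

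First I would verify the semisimplicity condition \eqref{SS H_n} for $\hat{\mathcal{H}}_n^{(0,k+1)}$: since $\kappa_1-\kappa_2=-(k+1)$ and $n\leq k$ gives $k+1\geq n+1>n-1$, we have $-(k+1)\notin\{0,\pm 1,\dots,\pm(n-1)\}$, so the algebra is semisimple. This is precisely the hypothesis needed to apply Corollary \ref{An basis} to the specialisation $\kappa=(0,k+1)$, which yields that $\{s_\omega\mid \omega\in B_n(\bar{1}\bar{2})\}$ is a basis of $\mathcal{A}_n^{(0,k+1)}$.

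Next I would invoke Theorem \ref{An iso}, which, under the same hypothesis $n\leq k$, provides an algebra isomorphism $\varphi:\mathcal{A}_n^{(0,k+1)}\xrightarrow{\sim}H_{k,n}$ (by abuse of notation we use the same symbol for the map and its factorisation through the quotient). Since an algebra isomorphism sends any basis of the source to a basis of the target, applying $\varphi$ to the basis furnished in the previous step yields that $\{\varphi(s_\omega)\mid \omega\in B_n(\bar{1}\bar{2})\}$ is a basis of $H_{k,n}$, which is the desired statement.

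There is no real obstacle here: both ingredients have already been proved, and the only point requiring a moment's thought is the elementary verification that $n\leq k$ is enough to guarantee semisimplicity (so that Corollary \ref{An basis}, rather than merely the spanning statement of Proposition \ref{base A_n}, is available). One could alternatively note that the cardinality match $|B_n(\bar{1}\bar{2})|=\dim(H_{k,n})$, which was already used in the proof of Theorem \ref{An iso} via \eqref{eq: dim A_n}, \eqref{eq: dim H_{k,n}} and \eqref{Bn12}, together with the surjectivity of $\varphi$ and the spanning property from Proposition \ref{base A_n}, suffices to conclude directly, bypassing any explicit reference to the isomorphism.
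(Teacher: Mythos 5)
Your proposal is correct and follows essentially the same route as the paper: combine Theorem \ref{An iso} with Corollary \ref{An basis} and use the fact that an isomorphism carries a basis to a basis. The extra verification that $n\leq k$ guarantees the semisimplicity condition \eqref{SS H_n} is a welcome (and correct) detail that the paper leaves implicit.
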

\begin{proof}
	The proof is straightforward from Theorem \ref{An iso} and Corollary \ref{An basis}, using the classical fact that the set formed by the images of elements of a basis under an isomorphism is a basis.
\end{proof}
\begin{Exe}
\begin{enumerate}
\item[•] \underline{$n = 2$}: The basis is:
\[
\{1, \sigma_1, t, \sigma_1 t, t \sigma_1, \sigma_1 t \sigma_1, t \sigma_1 t\},
\]
corresponding respectively to the images of 
\[
\{1, s_1, x_1, s_1 x_1, x_1 s_1, s_1 x_1 s_1, x_1 s_1 x_1\}.
\]
Adding $(x_1 s_1)^2$ to this set yields a basis of $\hat{\mathcal{H}}_2^{\kappa}$, but passing to the quotient makes it a linear combination of the others. Note that identifying $x_1$ with $s_0$, the word $(x_1 s_1)^2$ corresponds to the word $\bar{1}\bar{2}$, which is not $\bar{1}\bar{2}$-avoiding.

\item[•] \underline{$n = 3$}: The basis is:
\[
\begin{aligned}
\{ & 1,\ \sigma_2,\ \sigma_2\sigma_1,\ \sigma_2\sigma_1t,\ \sigma_2\sigma_1t\sigma_1,\ \sigma_2\sigma_1t\sigma_1\sigma_2,\ \sigma_1,\ \sigma_1\sigma_2,\ \sigma_1\sigma_2\sigma_1,\\
& \sigma_1\sigma_2\sigma_1t,\ \sigma_1\sigma_2\sigma_1t\sigma_1,\ \sigma_1\sigma_2\sigma_1t\sigma_1\sigma_2,\ \sigma_1t,\ \sigma_1t\sigma_2,\ \sigma_1t\sigma_2\sigma_1,\\
& \sigma_1t\sigma_2\sigma_1t,\ \sigma_1t\sigma_1,\ \sigma_1t\sigma_1\sigma_2,\ \sigma_1t\sigma_1\sigma_2\sigma_1,\ \sigma_1t\sigma_1\sigma_2\sigma_1t,\ \sigma_1t\sigma_1\sigma_2\sigma_1t\sigma_1,\\
& t,\ t\sigma_2,\ t\sigma_2\sigma_1,\ t\sigma_2\sigma_1t,\ t\sigma_1,\ t\sigma_1\sigma_2,\ t\sigma_1\sigma_2\sigma_1,\ t\sigma_1\sigma_2\sigma_1t,\ t\sigma_1\sigma_2\sigma_1t\sigma_1,\\
& t\sigma_1t,\ t\sigma_1t\sigma_2,\ t\sigma_1t\sigma_2\sigma_1,\ t\sigma_1t\sigma_2\sigma_1t \}.
\end{aligned}
\]

The elements removed when passing to the quotient are those of $\hat{\mathcal{H}}_3^{\kappa}$ of the form:
\[
\begin{aligned}
\{ & s_1 x_1 s_2 s_1 x_1 s_1,\ s_1 x_1 s_2 s_1 x_1 s_1 s_2,\ s_1 x_1 s_1 s_2 s_1 x_1 s_1 s_2,\ x_1 s_2 s_1 x_1 s_1,\\
& x_1 s_2 s_1 x_1 s_1 s_2,\ x_1 s_1 s_2 s_1 x_1 s_1 s_2,\ x_1 s_1 x_1 s_2 s_1 x_1 s_1,\ x_1 s_1 x_1 s_2 s_1 x_1 s_1 s_2,\\
& x_1 s_1 x_1 s_1,\ x_1 s_1 x_1 s_1 s_2,\ x_1 s_1 x_1 s_1 s_2 s_1,\ x_1 s_1 x_1 s_1 s_2 s_1 x_1,\\
& x_1 s_1 x_1 s_1 s_2 s_1 x_1 s_1,\ x_1 s_1 x_1 s_1 s_2 s_1 x_1 s_1 s_2 \},
\end{aligned}
\]
which correspond to the following words:
\[
\begin{array}{ccc}
s_1 x_1 s_2 s_1 x_1 s_1 & \leftrightsquigarrow & \bar{2} \bar{3} 1 \\
s_1 x_1 s_2 s_1 x_1 s_1 s_2 & \leftrightsquigarrow & \bar{2} 1 \bar{3} \\
s_1 x_1 s_1 s_2 s_1 x_1 s_1 s_2 & \leftrightsquigarrow & 1 \bar{2} \bar{3} \\
x_1 s_2 s_1 x_1 s_1 & \leftrightsquigarrow & \bar{1} \bar{3} 2 \\
x_1 s_2 s_1 x_1 s_1 s_2 & \leftrightsquigarrow & \bar{1} 2 \bar{3} \\
x_1 s_1 s_2 s_1 x_1 s_1 s_2 & \leftrightsquigarrow & 2 \bar{1} \bar{3} \\
x_1 s_1 x_1 s_2 s_1 x_1 s_1 & \leftrightsquigarrow & \bar{2} \bar{3} \bar{1} \\
x_1 s_1 x_1 s_2 s_1 x_1 s_1 s_2 & \leftrightsquigarrow & \bar{2} \bar{1} \bar{3} \\
x_1 s_1 x_1 s_1 & \leftrightsquigarrow & \bar{1} \bar{2} 3 \\
x_1 s_1 x_1 s_1 s_2 & \leftrightsquigarrow & \bar{1} 3 \bar{2} \\
x_1 s_1 x_1 s_1 s_2 s_1 & \leftrightsquigarrow & 3 \bar{1} \bar{2} \\
x_1 s_1 x_1 s_1 s_2 s_1 x_1 & \leftrightsquigarrow & \bar{3} \bar{1} \bar{2} \\
x_1 s_1 x_1 s_1 s_2 s_1 x_1 s_1 & \leftrightsquigarrow & \bar{1} \bar{3} \bar{2} \\
x_1 s_1 x_1 s_1 s_2 s_1 x_1 s_1 s_2 & \leftrightsquigarrow & \bar{1} \bar{2} \bar{3}
\end{array}
\]

These are exactly the elements of $B_3 \setminus B_3(\bar{1}\bar{2})$.
\end{enumerate}
\end{Exe}
The Bratteli diagram for the chain $\lbrace \mathcal{A}_n^{(0,k+1)}\rbrace_{n\geq 0}$ for the first four levels, for $n\leq k$, where the bipartitions shown are those in which the second component is a partition of length at most 1, is the following:
\begin{center}
\begin{tikzpicture}[scale=0.18]
\node at (0,0) {$(\emptyset,\emptyset)$};
\node at (24,0) {$\dim(\mathcal{A}_0^{(0,k+1)})=1$};
\draw[thick] (-0.5,-1.5) -- (-5.5,-3.5);
\draw[thick] (0.5,-1.5) -- (5.5,-3.5);

\node at (-8,-5) {$($};\diag{-7.5}{-4.5}{1};\node at (-5,-5) {$,\emptyset)$};
\node at (5,-5) {$(\emptyset ,$};\diag{6.5}{-4.5}{1};\node at (8,-5) {$)$};
\node at (24,-5) {$\dim(\mathcal{A}_1^{(0,k+1)})=2$};
\draw[thick] (-7.5,-6.5) -- (-17.5,-9.5);
\draw[thick] (-6,-6.5) -- (-8.5,-9.5);
\draw[thick] (-4.5,-6.5) -- (-1.5,-9.5);
\draw[thick] (4.5,-6.5) -- (1.5,-9.5);
\draw[thick] (6,-6.5) -- (8.5,-9.5);

\node at (-20,-11) {$($};\diag{-19.5}{-10.5}{2};\node at (-16,-11) {$,\emptyset)$};
\node at (-10,-11) {$($};\diagg{-9.5}{-10.5}{1}{1};\node at (-7,-11) {$, \emptyset)$};
\node at (-2,-11) {$($};\diag{-1.5}{-10.5}{1};\node at (0,-11) {$,$};\diag{0.5}{-10.5}{1};\node at (2,-11) {$)$};
\node at (7,-11) {$(\emptyset ,$};\diag{8.5}{-10.5}{2};\node at (11,-11) {$)$};
\node at (24,-11) {$\dim(\mathcal{A}_2^{(0,k+1)})=7$};

\draw[thick] (-19.5,-12.5) -- (-36.5,-15.5);
\draw[thick] (-17.5,-12.5) -- (-29,-15.5);
\draw[thick] (-15.5,-12.5) -- (-13,-15.5);

\draw[thick] (-10.2,-12.7) -- (-26,-15.5);
\draw[thick] (-7.6,-12.8) -- (-19.5,-15.5);
\draw[thick] (-6.5,-12.5) -- (-5.5,-15.5);

\draw[thick] (-1.5,-12.5) -- (-11,-15.5);
\draw[thick] (-0.5,-12.5) -- (-4,-15.5);
\draw[thick] (0.5,-12.5) -- (2,-15.5);

\draw[thick] (6.5,-12.5) -- (3.5,-15.5);
\draw[thick] (8.5,-12.5) -- (12,-15.5);

\node at (-39,-17) {$($};\diag{-38.5}{-16.5}{3};\node at (-34,-17) {$,\emptyset)$};
\node at (-30,-17) {$($};\diagg{-29.5}{-16.5}{2}{1};\node at (-26,-17) {$,\emptyset)$};
\node at (-22,-17) {$($};\diaggg{-21.5}{-16.5}{1}{1}{1};\node at (-19,-17) {$,\emptyset)$};
\node at (-15,-17) {$($};\diag{-14.5}{-16.5}{2};\node at (-12,-17) {$,$};\diag{-11.5}{-16.5}{1};\node at (-10,-17) {$)$};
\node at (-7,-17) {$($};\diagg{-6.5}{-16.5}{1}{1};\node at (-5,-17) {$,$};\diag{-4.5}{-16.5}{1};\node at (-3,-17) {$)$};
\node at (1,-17) {$($};\diag{1.5}{-16.5}{1};\node at (3,-17) {$,$};\diag{3.5}{-16.5}{2};\node at (6,-17) {$)$};
\node at (10,-17) {$(\emptyset ,$};\diag{11.5}{-16.5}{3};\node at (15,-17) {$)$};
\node at (24.5,-17) {$\dim(\mathcal{A}_3^{(0,k+1)})=34$};
\end{tikzpicture}
\end{center}
Note the bijection with the diagram shown in Section \ref{ss22}, using the bijection explained in Section \ref{sssec411}.
\subsection{Case where $n>k$}
Let $n\in \mathbb{Z}_{\geq 0}$ and $k\geq 1$ such that $n>k$.

We remark that the basis \eqref{basis set} of $\hat{\mathcal{H}}_n^\kappa$ is well adapted to the inclusion $B_{n-1}\subset B_n$ and then we identify an element of $B_k$ as an element of $B_n$ via this inclusion.

We set the following element of $\hat{\mathcal{H}}_n^{\kappa}$:
\begin{equation}
\tilde{F}_{k+1}^{(1,\kappa_1)}=\sum_{\omega\in B_{k+1}}
\left(\prod_{i=0}^k(i- \kappa_2)^{1-m_i(\omega)}\right)s_\omega.
\end{equation}

\begin{Def} \label{def: A_n^k}
We define the algebra $\mathcal{A}_n^{\kappa,(k)}$ as the quotient of $\mathcal{A}_n^\kappa$ by the relation:
\begin{equation}\label{rel longest element}
\tilde{F}_{k+1}^{(1,\kappa_1)}=0,
\end{equation}
where $\tilde{F}_{k+1}^{(1,\kappa_1)}$ is seen as an element of $\mathcal{A}_n^{\kappa}$.

It is understood that $\mathcal{A}_n^{\kappa,(k)}=\mathcal{A}_n^{\kappa}$ if $n\leq k$.
\end{Def}
\begin{Rem}
	If $\kappa_2-\kappa_1 \in \mathbb{C}\setminus \lbrace 0,1,\dots ,k\rbrace$, using the explicit expression of $F_{k+1}^{(1,\kappa_1)}$ (see \eqref{idempotent formula}), this is equivalent to imposing the following relation in $\mathcal{A}_n^\kappa$:
\begin{equation}
F_{k+1}^{(1,\kappa_1)} = 0,
\end{equation}
where $F_{k+1}^{(1,\kappa_1)}$ is viewed as an element of $\mathcal{A}_n^{\kappa}$.

Indeed, one sees that $F_{k+1}^{(1,\kappa_1)}=D_{k+1}(1,\kappa_1)\tilde{F}_{k+1}^{(1,\kappa_1)}$ with $D_{k+1}(1,\kappa_1)\neq 0$.
\end{Rem}
For $k=1,\dots ,n-1$, we set the following element of $\hat{\mathcal{H}}_n^\kappa$: 
\begin{equation}\label{longest element}
\Delta_k:=x_1s_1x_1\dots s_k\dots s_1x_1.
\end{equation}
It is understood that $\Delta_0=x_1$.
\begin{Pro}\label{coef} Let $n\in \mathbb{Z}_{\geq 1}$.

The basis element \eqref{longest element} for $k=n-1$ appears with coefficient $n!$ when $\tilde{F}_n^{(1,\kappa_1)}$ is expressed in the basis $\{ s_\omega \mid \omega \in B_n(\bar{1}\bar{2}) \}$ of $\mathcal{A}_n^{\kappa}$.
\end{Pro}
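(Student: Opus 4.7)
The plan is to prove the statement by induction on $n\ge1$. The base case $n=1$ is immediate since $\tilde F_1^{(1,\kappa_1)}=x_1-\kappa_2$ gives coefficient $1=1!$ on $\Delta_0=x_1$. For the inductive step I would use the rescaled form of the recurrence \eqref{Rec idempotent}, namely
\[
\tilde F_{n+1}^{(1,\kappa_1)}=\tilde F_n^{(1,\kappa_1)}(x_{n+1}-\kappa_1+1)(x_{n+1}-\kappa_2),
\]
combined with the identity
\[
\tilde F_n^{(1,\kappa_1)}(x_{n+1}-\kappa_1+1)(x_{n+1}-\kappa_2)
=\tilde F_n^{(1,\kappa_1)}\Bigl(\sum_{j=0}^n[n,-j]+(n-\kappa_2)\sum_{j=1}^{n+1}[n,j]\Bigr),
\]
obtained during the proof of Proposition \ref{idempotent 1} (after multiplying by the normalisation $n!\prod_{i=0}^{n-1}(i+\kappa_1-\kappa_2)$ converting $F$ into $\tilde F$).

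The central tool is the increasing filtration $\mathcal{F}_0\subset\mathcal{F}_1\subset\cdots$ of $\mathcal{A}_n^{\kappa}$ defined by $\mathcal{F}_d:=\mathrm{span}\{s_\omega\mid \omega\in B_n(\bar 1\bar 2),\ N(\omega)\le d\}$, where $N(\omega)$ counts the barred entries of $\omega$, equivalently the occurrences of $x_1$ in its standard-form expression. Because each defining relation of $\mathcal{A}_n^{\kappa}$, including the extra relation \eqref{(x_1s_1)^2 general}, only involves terms of $x_1$-degree no larger than that of its leading term, this filtration is multiplicative; and in the associated graded the defining relations reduce to $x_1^2=0$, $(x_1s_1)^2=x_1s_1x_1$ (together with $(s_1x_1)^2=x_1s_1x_1$ coming from the braid relation in $\hat{\mathcal{H}}_n^{\kappa}$), plus the braid and commutation relations, none of which involves $\kappa_1,\kappa_2$. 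Finally, the only element of $B_n(\bar 1\bar 2)$ with $n$ barred entries is $\bar n\,\bar{n-1}\cdots\bar 1=\Delta_{n-1}$, so $\mathcal{F}_n/\mathcal{F}_{n-1}$ is one-dimensional, spanned by the class of $s_{\Delta_{n-1}}$, and the analogous statement holds for $\mathcal{A}_{n+1}^{\kappa}$ and $\Delta_n$.

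Under the inductive hypothesis we may write $\tilde F_n^{(1,\kappa_1)}=n!\,s_{\Delta_{n-1}}+R_n$ with $R_n\in\mathcal{F}_{n-1}$. Since each $[n,-j]$ ($j=0,\dots,n$) has $x_1$-degree $1$ while each $[n,j]$ ($j=1,\dots,n+1$) has $x_1$-degree $0$, the image of $\tilde F_{n+1}^{(1,\kappa_1)}$ in $\mathcal{F}_{n+1}/\mathcal{F}_n$ is given by $n!\,s_{\Delta_{n-1}}\cdot\sum_{j=0}^n[n,-j]$. Using the factorisation $[n,-j]=[n,0]\,s_1\cdots s_j$ and the equality $\Delta_{n-1}[n,0]=\Delta_n$, this rewrites as $n!\sum_{j=0}^n \Delta_n\,s_1\cdots s_j$ modulo $\mathcal{F}_n$. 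The conclusion then reduces to the following key lemma: for each $i=1,\dots,n$, one has $\Delta_n s_i\equiv \Delta_n\pmod{\mathcal{F}_n}$ in $\mathcal{A}_{n+1}^{\kappa}$. Iterating yields $\Delta_n\,s_1\cdots s_j\equiv \Delta_n$, whence the top graded piece of $\tilde F_{n+1}^{(1,\kappa_1)}$ becomes $n!(n+1)\,\Delta_n=(n+1)!\,\Delta_n$, as required.

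The key lemma is the main obstacle. I would argue it as follows: since $\mathcal{F}_{n+1}/\mathcal{F}_n$ is one-dimensional, $\Delta_n s_i\equiv c_i\,\Delta_n\pmod{\mathcal{F}_n}$ for some scalar $c_i$, and the $\kappa$-independence of the associated graded relations forces $c_i$ to be a complex number independent of $\kappa_1,\kappa_2$. To pin down $c_i$, I evaluate both sides in the one-dimensional representation $W_{((n+1),\emptyset)}$, which factors through $\mathcal{A}_{n+1}^{\kappa}$ (since $\ell(\emptyset)<2$) via $x_1\mapsto\kappa_1$ and $s_j\mapsto 1$: the images of $\Delta_n s_i$ and $\Delta_n$ are both $\kappa_1^{n+1}$, while any element of $\mathcal{F}_n$ specialises to a polynomial in $\kappa_1$ of degree at most $n$; comparing the coefficient of $\kappa_1^{n+1}$ forces $c_i=1$. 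The point demanding most care is the multiplicativity of the filtration and the explicit identification of the associated graded with the $\kappa$-independent algebra described above, both of which rest on a careful inspection of the defining relations of $\mathcal{A}_n^{\kappa}$ to verify that none of them can produce a basis element of strictly larger $x_1$-degree.
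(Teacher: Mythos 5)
Your overall strategy is the same as the paper's: induct on $n$, use the rescaled recurrence $\tilde F_{n+1}^{(1,\kappa_1)}=\tilde F_n^{(1,\kappa_1)}\bigl(\sum_{j=0}^n[n,-j]+(n-\kappa_2)\sum_{j=1}^{n+1}[n,j]\bigr)$, observe that only the terms $[n,-j]$ raise the $x_1$-count, and reduce everything to showing that $\Delta_n s_1\cdots s_j$ equals $\Delta_n$ modulo words with fewer occurrences of $x_1$. Your filtration by $\ell_0$ is a clean formalization of what the paper does implicitly, and the reduction of the inductive step to the key lemma is correct.

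The gap is in your proof of the key lemma. You write $\Delta_n s_i=c_i\Delta_n+R$ with $R\in\mathcal{F}_n$ and claim that, under the one-dimensional representation $x_1\mapsto\kappa_1$, $s_j\mapsto 1$, "any element of $\mathcal{F}_n$ specialises to a polynomial in $\kappa_1$ of degree at most $n$." That would be true if $R$ were a fixed $\mathbb{C}$-linear combination of the $s_\omega$ with $\ell_0(\omega)\le n$, but $R$ is produced by repeated applications of relation \eqref{(x_1s_1)^2 general}, whose coefficients themselves involve $\kappa_1$ (e.g.\ $\kappa_1(1+\kappa_1)(s_1-1)$ and $\kappa_1\,s_1x_1s_1$). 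Already for $n+1=2$ the term $\kappa_1 s_1x_1s_1$ specialises to $\kappa_1^2$, the same degree as $x_1s_1x_1$; in general each rewriting preserves the quantity $\deg_{\kappa}(\text{coefficient})+\ell_0$, so terms of $R$ can and do contribute to the coefficient of $\kappa_1^{n+1}$, and comparing that coefficient does not isolate $c_i$. The step as written therefore does not establish $c_i=1$. The fix is the direct computation the paper uses (and which your own graded setup makes immediate): the unique term of top $x_1$-degree on the right-hand side of \eqref{(x_1s_1)^2 general} is $x_1s_1x_1$ with coefficient exactly $1$, so in the associated graded one has $(x_1s_1)^2=x_1s_1x_1$; writing $\Delta_n s_1=(\cdots)s_n\cdots s_2(x_1s_1)^2$ and pushing each subsequent $s_2,\dots,s_j$ through to create a fresh $(x_1s_1)^2$ at each stage gives $\Delta_n s_1\cdots s_j\equiv\Delta_n$ modulo $\mathcal{F}_n$ with no appeal to representations.
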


\begin{proof}
We prove the result by induction on $n$. 

For $n=1$, we have $\tilde{F}_1^{(1,\kappa_1)} = x_1-\kappa_2$ and then the coefficient in front of $\Delta_0=x_1$ is $1$.

Assume the result holds for $n \geq 1$. We obtain from \eqref{Rec idempotent} the following recurence formula:
\begin{equation}\label{Rec idempotent 2}
\tilde{F}_{n+1}^{(1,\kappa_1)}=\tilde{F}_n^{(1,\kappa_1)}\left( \sum_{j=0}^n [n,-j]+(n-\kappa_2)\sum_{j=1}^{n+1}[n,j]\right).
\end{equation}

Using the recurrence hypothesis and focusing on terms contributing to the coefficient in front of $\Delta_n$, we have:
\begin{equation}\label{delta rec}
n! \Delta_n \left(1+\sum_{j=1}^n s_1\dots s_j\right).
\end{equation}

Fix $j = 1, \dots, n$. We have:
\[
\Delta_n(s_1\dots s_j) = x_1s_1x_1 \dots s_{n-1}\dots s_1s_n \dots s_2(x_1s_1)^2 s_2 \dots s_j.
\]

Using relation \eqref{(x_1s_1)^2 general} in $\mathcal{A}_n^{\kappa}$, we can rewrite $(x_1s_1)^2$ in $\mathcal{A}_n^{\kappa}$.

The only term not reducing the number of occurrences of $x_1$ in \eqref{(x_1s_1)^2 general} is $x_1s_1x_1$. Each $s_1,s_2, \dots,s_j$ can be pushed  one by one through the remaining expression, yielding a new $(x_1s_1)^2$, which again reduces to $x_1s_1x_1$. Thus, every term in \eqref{delta rec} produces exactly one full expression of the form $\Delta_n$, yielding $(n+1)$ such terms.

Finally, the coefficient in front of $\Delta_n$ is $(n+1)!$ when $\tilde{F}_{n+1}^{(1,\kappa_1)}$ is expressed in the basis $\{ s_\omega \mid \omega \in B_{n+1}(\bar{1}\bar{2}) \}$ of $\mathcal{A}_{n+1}^{\kappa}$.

This concludes the proof.
\end{proof}

Thanks to Proposition \ref{coef} and Corollary \ref{An basis} applied to $n=k+1$, we can rewrite the relation \eqref{rel longest element} as:
\begin{equation}\label{new quo}
\Delta_k=\sum_{\omega\in B_{k+1}(\bar{1}\bar{2})\setminus \lbrace \Delta_k\rbrace}\lambda_\omega s_\omega,
\end{equation}
for $\lambda_\omega \in \mathbb{C}$.

There is a unique element $\omega$ in $B_{k+1}(\bar{1}\bar{2})$ such that $\ell_0(\omega) = k+1$, all others having strictly fewer $x_1$ occurrences. This element is $\Delta_k$ which corresponds to the word $\overline{k+1}\bar{k}\dots \bar{1}$.

In particular, the right hand side of \eqref{new quo} is a linear combination of elements indexed by $\omega\in B_{k+1}(\bar{1}\bar{2})$ such that $\ell_0(\omega)\leq k$.
\paragraph{Artin-Wedderburn decomposition for $\mathcal{A}_n^{\kappa,(k)}$.}
We assume that the condition \eqref{SS H_n} is satisfied, that is:
\[\kappa_1-\kappa_2\in \mathbb{C}\setminus \lbrace 0,\pm 1,\dots ,\pm (n-1)\rbrace.\]

In the particular case of $\mathcal{A}_n^{\kappa,(k)}$, which is the quotient of $\mathcal{A}_n^{\kappa}$ by the relation $F_{k+1}^{(1,\kappa_1)}=0$, we recall the indexing of one-dimensional representations set up in (\ref{1dimrep}), and we find that the disappearing representations are those $V_{(\lambda,\mu)}$ with $\lambda$ with at least $k+1$ columns and $\mu$ having at least two non-empty rows. 
Then, the algebra $\mathcal{A}_n^{\kappa,(k)}$ is semisimple with the following set of irreducible representations:
\[Irr(\mathcal{A}_n^{\kappa,(k)})=\{V_{(\lambda,\mu)}\ |\ (\lambda,\mu)\in Par_2(n)\ \text{such that}\ \lambda_1<k+1\ \text{and}\ \ell(\mu)<2\}\ .\]

For $k=2$, the Bratteli diagram for the chain $\lbrace \mathcal{A}_n^{\kappa,(2)}\rbrace_{n\geq 0}$ for the first four levels, where the bipartitions shown are those in which the second component is a partition of length at most 1 and for $n=3$ only, we delete the partition $((3),\emptyset)$:
\begin{center}
\begin{tikzpicture}[scale=0.20]
\node at (0,0) {$(\emptyset,\emptyset)$};
\node at (24,0) {$\dim(\mathcal{A}_0^{\kappa,(2)})=1$};

\draw[thick] (-0.5,-1.5) -- (-5.5,-3.5);
\draw[thick] (0.5,-1.5) -- (5.5,-3.5);

\node at (-8,-5) {$($};\diag{-7.5}{-4.5}{1};\node at (-5,-5) {$,\,\emptyset)$};
\node at (5,-5) {$(\emptyset\,,$};\diag{6.5}{-4.5}{1};\node at (8,-5) {$)$};
\node at (24,-5) {$\dim(\mathcal{A}_1^{\kappa,(2)})=2$};
\draw[thick] (-7.5,-6.5) -- (-17.5,-9.5);
\draw[thick] (-6,-6.5) -- (-8.5,-9.5);
\draw[thick] (-4.5,-6.5) -- (-1.5,-9.5);
\draw[thick] (4.5,-6.5) -- (1.5,-9.5);
\draw[thick] (6,-6.5) -- (8.5,-9.5);

\node at (-20,-11) {$($};\diag{-19.5}{-10.5}{2};\node at (-16,-11) {$,\,\emptyset)$};
\node at (-10,-11) {$($};\diagg{-9.5}{-10.5}{1}{1};\node at (-7,-11) {$,\,\emptyset)$};
\node at (-2,-11) {$($};\diag{-1.5}{-10.5}{1};\node at (0,-11) {$,$};\diag{0.5}{-10.5}{1};\node at (2,-11) {$)$};
\node at (7,-11) {$(\emptyset\,,$};\diag{8.5}{-10.5}{2};\node at (11,-11) {$)$};
\node at (24,-11) {$\dim(\mathcal{A}_2^{\kappa,(2)})=7$};

\draw[thick] (-17.5,-12.5) -- (-29,-15.5);
\draw[thick] (-15.5,-12.5) -- (-13,-15.5);

\draw[thick] (-10.2,-12.7) -- (-26,-15.5);
\draw[thick] (-7.6,-12.8) -- (-19.5,-15.5);
\draw[thick] (-6.5,-12.5) -- (-5.5,-15.5);

\draw[thick] (-1.5,-12.5) -- (-11,-15.5);
\draw[thick] (-0.5,-12.5) -- (-4,-15.5);
\draw[thick] (0.5,-12.5) -- (2,-15.5);

\draw[thick] (6.5,-12.5) -- (3.5,-15.5);
\draw[thick] (8.5,-12.5) -- (12,-15.5);

\node at (-30,-17) {$($};\diagg{-29.5}{-16.5}{2}{1};\node at (-26,-17) {$,\,\emptyset)$};
\node at (-22,-17) {$($};\diaggg{-21.5}{-16.5}{1}{1}{1};\node at (-19,-17) {$,\,\emptyset)$};
\node at (-15,-17) {$($};\diag{-14.5}{-16.5}{2};\node at (-12,-17) {$,$};\diag{-11.5}{-16.5}{1};\node at (-10,-17) {$)$};
\node at (-7,-17) {$($};\diagg{-6.5}{-16.5}{1}{1};\node at (-5,-17) {$,$};\diag{-4.5}{-16.5}{1};\node at (-3,-17) {$)$};
\node at (1,-17) {$($};\diag{1.5}{-16.5}{1};\node at (3,-17) {$,$};\diag{3.5}{-16.5}{2};\node at (6,-17) {$)$};
\node at (10,-17) {$(\emptyset\,,$};\diag{11.5}{-16.5}{3};\node at (15,-17) {$)$};
\node at (24,-17) {$\dim(\mathcal{A}_3^{\kappa,(2)})=33$};
\end{tikzpicture}
\end{center}
\paragraph{Avoiding words in $B_n(\bar{1}\bar{2})$.}
We define the subset $B_n(\bar{1}\bar{2},\overline{k+1} \dots \bar{1})$ of $B_n(\bar{1}\bar{2})$ as the set of $\bar{1}\bar{2}$-avoiding words with at most $k$ barred integers.
For instance, $2\bar{5}\bar{4}1\bar{3}6 \in B_6(\bar{1}\bar{2},\bar{4}\bar{3}\bar{2}\bar{1})$ but is not in $B_6(\bar{1}\bar{2},\bar{3}\bar{2}\bar{1})$.

\begin{Lem}
We have the following identity:
\begin{equation} \label{Bn cardinal}
|B_n(\bar{1}\bar{2},\overline{k+1} \dots \bar{1})| = \sum_{i=0}^{\min(k,n)} \binom{n}{i}^2 (n - i)!.
\end{equation}
\end{Lem}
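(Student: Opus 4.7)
The plan is to mimic the counting argument used just before for $|B_n(\bar{1}\bar{2})|$ in \eqref{Bn12}, but with the extra constraint coming from avoiding the pattern $\overline{k+1}\dots\bar{1}$. Recall that the number of barred integers in a word $b \in B_n$ equals $\ell_0$ of the corresponding element, and that forbidding the subword $\overline{k+1}\bar{k}\dots\bar{1}$ \emph{inside the class $B_n(\bar{1}\bar{2})$} is exactly the condition that a $\bar{1}\bar{2}$-avoiding word has at most $k$ barred entries: indeed, in a $\bar{1}\bar{2}$-avoiding word the barred integers already appear in decreasing order, so as soon as there are $k+1$ of them they form (as a subword) a copy of $\overline{k+1}\dots\bar{1}$, and conversely if there are at most $k$ of them no such pattern can occur.

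From this observation, I partition $B_n(\bar{1}\bar{2},\overline{k+1}\dots\bar{1})$ according to the number $i$ of barred entries in a word $b = b_1\dots b_n$. The index $i$ then ranges over $\{0,1,\dots,\min(k,n)\}$, since on the one hand $i \le k$ by the pattern-avoidance condition, and on the other hand $i \le n$ trivially.

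For a fixed $i$ I construct such a word by the same three-step procedure used for \eqref{Bn12}: first choose the $i$-element subset of $\{1,\dots,n\}$ consisting of the integers that will be barred (this gives $\binom{n}{i}$ choices); second, choose the $i$ positions in $b$ where these barred integers will appear (another $\binom{n}{i}$ choices), the barred entries being then placed in those positions in decreasing order (forced by the $\bar{1}\bar{2}$-avoidance, so only one choice); third, distribute the remaining $n-i$ unbarred integers arbitrarily over the remaining $n-i$ positions, giving $(n-i)!$ choices. Multiplying and summing over $i$ yields
\[
|B_n(\bar{1}\bar{2},\overline{k+1}\dots\bar{1})| = \sum_{i=0}^{\min(k,n)} \binom{n}{i}^2 (n-i)!,
\]
as required.

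There is no real obstacle: the only subtlety is the first observation, namely that within $B_n(\bar{1}\bar{2})$ the avoidance of $\overline{k+1}\dots\bar{1}$ is equivalent to the bound $\ell_0 \le k$. Once this is noted, the count is identical to the one carried out for $B_n(\bar{1}\bar{2})$ in \eqref{Bn12}, with the upper summation bound simply truncated at $\min(k,n)$.
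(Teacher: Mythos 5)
Your proof is correct and follows essentially the same counting argument as the paper: partition by the number $i \le \min(k,n)$ of barred entries, choose the $i$ barred integers, their $i$ positions (order forced by $\bar{1}\bar{2}$-avoidance), and a permutation of the remaining $n-i$ integers. The preliminary remark identifying pattern-avoidance of $\overline{k+1}\dots\bar{1}$ with the bound $\ell_0\le k$ is a harmless addition, since the paper simply takes that as the definition of $B_n(\bar{1}\bar{2},\overline{k+1}\dots\bar{1})$.
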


\begin{proof}
We choose a number $i \in \{1,\dots,n\}$ of barred elements, with $i \leq k$. We then choose $i$ positions among the $n$ available to place the barred elements. Finally, we choose a permutation of the remaining $n - i$ integers to fill the remaining positions. This gives the stated formula.
\end{proof}

\begin{Rem}
There is another useful description of the set $B_n(\bar{1}\bar{2},\overline{k+1} \dots \bar{1})$, namely:
\begin{equation}
B_n(\bar{1}\bar{2},\overline{k+1} \dots \bar{1}) = B_n(\bar{1}\bar{2}) \cap \{\omega \in B_n \mid \ell_0(\omega) < k+1\}.
\end{equation}
In particular, we have $B_n(\bar{1}\bar{2},\overline{k+1} \dots \bar{1}) = B_n(\bar{1}\bar{2})$ if and only if $n \leq k$ (see \cite{SR}).
\end{Rem}
\begin{The} \label{Thm gen}
Let $\kappa\in \mathbb{C}^2$.

The set $\{ s_\omega \mid \omega \in B_n(\bar{1}\bar{2},\overline{k+1} \dots \bar{1}) \}$ spans $\mathcal{A}_n^{\kappa,(k)}$.
\end{The}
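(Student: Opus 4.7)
Since $\mathcal{A}_n^{\kappa,(k)}$ is a quotient of $\mathcal{A}_n^\kappa$, Proposition \ref{base A_n} already ensures that $\{s_\omega : \omega \in B_n(\bar{1}\bar{2})\}$ is a spanning set of $\mathcal{A}_n^{\kappa,(k)}$. By the remark preceding the statement, which identifies $B_n(\bar{1}\bar{2},\overline{k+1}\cdots\bar{1})$ with $\{\omega \in B_n(\bar{1}\bar{2}) : \ell_0(\omega) \leq k\}$, it is enough to prove the following reduction claim: for every $\omega \in B_n(\bar{1}\bar{2})$ with $\ell_0(\omega) \geq k+1$, the class of $s_\omega$ in $\mathcal{A}_n^{\kappa,(k)}$ lies in the linear span of $\{s_\mu : \mu \in B_n(\bar{1}\bar{2}),\ \ell_0(\mu) < \ell_0(\omega)\}$. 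An induction on $\ell_0(\omega)$, combined with Proposition \ref{base A_n} applied at each stage, then concludes the proof.

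The key ingredient is the rewriting \eqref{new quo}, which in $\mathcal{A}_n^{\kappa,(k)}$ expresses $\Delta_k = \sum_\nu \lambda_\nu s_\nu$ with $\nu \in B_{k+1}(\bar{1}\bar{2})\setminus\{\Delta_k\}$ and in particular $\ell_0(\nu)\leq k$. Given $\omega \in B_n(\bar{1}\bar{2})$ with $\ell_0(\omega)\geq k+1$, the $\bar{1}\bar{2}$-avoidance forces the barred entries of $\omega$ to appear in strictly decreasing order, so one factorises $\omega = \alpha \cdot \omega_0 \cdot \beta$ in $B_n$, where $\omega_0 \in B_{k+1}\subset B_n$ is the element with word $\overline{k+1}\,\bar{k}\cdots\bar{1}\,(k+2)\cdots n$ (so that $s_{\omega_0}=\Delta_k$), $\beta \in \mathfrak{S}_n$ is the unbarred permutation sending the first $k+1$ positions onto the barred positions of $\omega$, and $\alpha \in \mathfrak{S}_n$ is the unbarred permutation sending $\{1,\ldots,k+1\}$ onto the barred values of $\omega$. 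The plan is then to transfer this combinatorial factorisation to an identity in $\hat{\mathcal{H}}_n^\kappa$ of the form
\[
s_\omega = s_\alpha \cdot \Delta_k \cdot s_\beta + R,
\]
with $R$ a linear combination of $s_\mu$ satisfying $\ell_0(\mu) < \ell_0(\omega)$. Substituting relation \eqref{new quo} into $s_\alpha \Delta_k s_\beta$ produces a sum $\sum_\nu \lambda_\nu s_\alpha s_\nu s_\beta$ in which each product has total bar count at most $k$; expanding each in the spanning family via Proposition \ref{base A_n} then yields the required reduction.

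The delicate point, and thus the main obstacle, is the justification of the identity $s_\omega = s_\alpha \Delta_k s_\beta + R$ with $R$ controlled by strictly lower bar count. The group-theoretic factorisation is direct, but the corresponding product in the Hecke algebra need not be reduced: braid moves and the $x_1$-specific relations \eqref{eq: DR3} and \eqref{(x_1s_1)^2 general} may intervene when rewriting $s_\alpha s_{\omega_0} s_\beta$ in the standard basis of $\hat{\mathcal{H}}_n^\kappa$ (cf.\ Remark \ref{Rem braid}). The essential observation to unlock this bookkeeping is that none of the defining relations of $\hat{\mathcal{H}}_n^\kappa$ or $\mathcal{A}_n^\kappa$ can strictly increase the number of $x_1$-factors in a monomial, so $R$ is automatically a combination of $s_\mu$ with $\ell_0(\mu) \leq \ell_0(\omega)$; identifying $s_\omega$ as the unique term of maximal length \emph{and} maximal bar count in the expansion of $s_\alpha s_{\omega_0} s_\beta$ then ensures that it appears with coefficient $1$, while every other term in $R$ satisfies $\ell_0(\mu) < \ell_0(\omega)$, closing the induction.
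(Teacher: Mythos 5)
Your overall strategy coincides with the paper's: reduce every $s_\omega$ with $\ell_0(\omega)\ge k+1$ to terms of strictly smaller bar count by locating a copy of $\Delta_k$ inside it and substituting relation \eqref{new quo}, then invoke Proposition \ref{base A_n} to restore $\bar{1}\bar{2}$-avoidance. However, there is a genuine gap in your reduction step. The factorisation $\omega=\alpha\,\omega_0\,\beta$ with $\alpha,\beta\in\mathfrak{S}_n$ unbarred and $\omega_0$ the word $\overline{k+1}\,\bar{k}\cdots\bar{1}\,(k+2)\cdots n$ can only exist when $\ell_0(\omega)=k+1$ exactly: left or right multiplication by an unbarred permutation preserves the number of negative entries of a signed permutation, so $\alpha\,\omega_0\,\beta$ always has exactly $k+1$ barred letters. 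Consequently your induction on $\ell_0(\omega)$ has no reduction step for the elements with $\ell_0(\omega)\ge k+2$, which are precisely the elements that must also be eliminated once $n>k+1$. The paper avoids this by working with the normal form \eqref{Standard form} of $s_\omega$ and isolating, via commutation and braid moves, a subexpression built from $k+1$ of the $x_1$-carrying blocks $[n_i,m_i]$ (those with $m_i\le 0$), which reassembles into $\Delta_k$ no matter how many further occurrences of $x_1$ remain elsewhere in the word. Your argument could be repaired, for instance, by factoring out $\Delta_{m-1}$ with $m=\ell_0(\omega)$ and observing that $\Delta_{m-1}$ begins with $\Delta_k$, or by allowing $\alpha$ to carry the remaining bars, but as written the induction does not close.

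A secondary concern: even in the case $\ell_0(\omega)=k+1$, the identity $s_\alpha\Delta_k s_\beta=s_\omega+R$ with every term of $R$ of strictly smaller bar count needs more justification than ``$s_\omega$ is the unique term of maximal length and maximal bar count''. The relations of $\hat{\mathcal{H}}_n^\kappa$ that preserve the bar count ($s_i^2=1$ for $i\ge1$, type-$A$ braid moves, and the leading term of \eqref{eq: DR3}) could in principle produce terms of bar count $k+1$ other than $s_\omega$ unless the concatenation of reduced words for $\alpha$, $\omega_0$ and $\beta$ is itself a reduced word for $\omega$ in $B_n$; you would need to verify this length-additivity for your specific choice of $\alpha$ and $\beta$, or set up a secondary induction on length at fixed bar count. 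The paper sidesteps this because its manipulation of the normal form uses only commutations and type-$A$ braid relations, which act on the word without generating correction terms, before relation \eqref{new quo} is applied.
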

\begin{proof}
Recall that $\{ s_\omega \mid \omega \in B_n(\bar{1}\bar{2}) \}$ spans $\mathcal{A}_n^{\kappa}$ (Proposition \ref{base A_n}). Recall also that there is a unique element in this basis such that $\ell_0(\omega) = n$, all others having strictly fewer $x_1$ occurrences. That element is:
\begin{equation} 
\bar{n}\overline{n-1} \dots \bar{1} \leftrightsquigarrow \Delta_{n-1}=x_1s_1x_1s_2s_1x_1 \dots s_{n-1} \dots s_1x_1.
\end{equation}

We now show using Lemma \ref{coef} that any $s_\omega$ with $\omega \in B_n$ such that $\ell_0(\omega) \geq k+1$ can be rewritten in $\mathcal{A}_n^{\kappa,(k)}$ as a linear combination of elements $s_\delta$ with $\ell_0(\delta) \leq k$.

Write such an element in its standard form (see \eqref{Standard form}). Braid relations allow us to isolate a subexpression of the form:
\[
(s_{i_1} \dots s_1 x_1)(s_{i_2} \dots s_1 x_1) \dots (s_{i_{k+1}} \dots s_1 x_1), \quad 1 \leq i_1 < \dots < i_{k+1}.
\]
Using commutativity relations to push $s_i$ to the left, we recover the element (\ref{longest element}) for $n = k+1$. Thanks to Proposition \ref{coef} applied to $n=k+1$, the relation \eqref{new quo} in $\mathcal{A}_n^{\kappa,(k)}$ then allows rewriting this element as a combination of elements $s_\delta$ with $\ell_0(\delta) \leq k$, thus reducing the number of $x_1$ occurrences. By induction, we conclude that $\mathcal{A}_n^{\kappa,(k)}$ is generated by such elements.

Finally, for any $\omega \in B_n \setminus B_n(\bar{1}\bar{2})$ with $\ell_0(\omega) \leq k$, the relation (\ref{(x_1s_1)^2 general}) allows us to express $s_\omega$ (as in Theorem \ref{base A_n}) in terms of $s_\delta$, $\delta \in B_n(\bar{1}\bar{2})$. Moreover, $\ell_0$ does not increase, as the relation involves terms where $x_1$ appears at most twice. The result follows.
\end{proof}
We are now ready to give an algebraic description of the algebra of fused permutations.

Let $n\in \mathbb{Z}_{\geq 0}$ and $k\geq 1$.

We consider now the specialization \eqref{specialisation}, that is:
\[\kappa_1=0,\quad \kappa_2=k+1.\]
\begin{The} \label{Iso}
There exists an algebra isomorphism between $\mathcal{A}_n^{(0,k+1),(k)}$ and $H_{k,n}$ given by:
\begin{equation} \label{Isomorphism}
\begin{array}{ccccc}
\varphi : & \mathcal{A}_n^{(0,k+1),(k)} & \longrightarrow & H_{k,n} \\
& s_i & \longmapsto & \sigma_i, & i=1,\dots,n-1, \\
& x_1 & \longmapsto & t := \mathbf{1} + k\sigma_0.
\end{array}
\end{equation}
\end{The}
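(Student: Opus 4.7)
For $n\le k$ the statement is exactly Theorem \ref{An iso}, so I may assume $n>k$. The plan is to show that the surjection $\varphi$ of Proposition \ref{quotient} descends to a surjection $\overline{\varphi}:\mathcal{A}_n^{(0,k+1),(k)}\to H_{k,n}$ and then to match dimensions. By Lemma \ref{(s_1t)^2}, $\varphi$ already factors through $\mathcal{A}_n^{(0,k+1)}$; the main obstacle is to prove $\varphi(F_{k+1}^{(1,0)})=0$ in $H_{k,n}$, which by the remark after Definition \ref{def: A_n^k} (valid since $\kappa_2-\kappa_1=k+1\notin\{0,1,\dots,k\}$) is equivalent to killing $\tilde F_{k+1}^{(1,0)}$. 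Once this is established, Theorem \ref{Thm gen} combined with \eqref{Bn cardinal} and \eqref{eq: dim H_{k,n}} yields $\dim\mathcal{A}_n^{(0,k+1),(k)}\le|B_n(\bar 1\bar 2,\overline{k+1}\dots\bar 1)|=\dim H_{k,n}$; surjectivity of $\overline{\varphi}$ then forces equality, and the spanning set of Theorem \ref{Thm gen} becomes a basis.

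I reduce the vanishing of $\varphi(F_{k+1}^{(1,0)})$ to the case $n=k+1$. Since $F_{k+1}^{(1,0)}\in \hat{\mathcal{H}}_{k+1}^{(0,k+1)}$ involves only $x_1,s_1,\dots,s_k$, its image in $H_{k,n}$ lies in the subalgebra generated by $t$ and $\sigma_1,\dots,\sigma_k$, which via the chain inclusion $H_{k,k+1}\hookrightarrow H_{k,n}$ is identified with $H_{k,k+1}$. At $n=k+1$ the semisimplicity condition \eqref{SS H_n} holds since $\kappa_1-\kappa_2=-(k+1)\notin\{0,\pm 1,\dots,\pm k\}$, so $\mathcal{A}_{k+1}^{(0,k+1)}$ is semisimple. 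Using \eqref{eq: dim A_n} and \eqref{eq: dim H_{k,n}},
\[
\dim \mathcal{A}_{k+1}^{(0,k+1)}-\dim H_{k,k+1}=\binom{k+1}{k+1}^2\cdot 0!=1,
\]
so the kernel of the induced surjection $\overline{\varphi}:\mathcal{A}_{k+1}^{(0,k+1)}\to H_{k,k+1}$ is a one-dimensional ideal, hence the ideal generated by the primitive central idempotent attached to exactly one one-dimensional irreducible representation of $\mathcal{A}_{k+1}^{(0,k+1)}$.

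To identify this representation, I pull back the two one-dimensional representations of $H_{k,k+1}$ from \eqref{1rep FP} (the third, $\lambda=(k,k+1)$, disappears for $n>k$). Using $t=\mathbf{1}+k\sigma_0$, the trivial representation $\lambda=(2k+1)$ pulls back to $s_i\mapsto 1,\,x_1\mapsto k+1$, which is $(\emptyset,(k+1))$, and $\lambda=(k,1^{k+1})$ pulls back to $s_i\mapsto -1,\,x_1\mapsto 0$, which is $((1^{k+1}),\emptyset)$. Among the three one-dimensional representations of $\mathcal{A}_{k+1}^{(0,k+1)}$ (the fourth, $(\emptyset,(1^{k+1}))$, having been killed already by the defining relation $F_2^{(-1,\kappa_2)}=0$ since $\ell((1^{k+1}))\ge 2$), the only one not hit is $((k+1),\emptyset)$, whose associated primitive central idempotent is precisely $F_{k+1}^{(1,0)}$ by \eqref{PCI}. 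This gives $\varphi(F_{k+1}^{(1,0)})=0$ in $H_{k,k+1}$, hence in $H_{k,n}$, completing the argument.
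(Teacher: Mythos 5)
Your proposal is correct, and its overall architecture coincides with the paper's: descend the surjection $\varphi$ of Proposition \ref{quotient} to $\mathcal{A}_n^{(0,k+1),(k)}$, note that surjectivity survives, and then get injectivity from Theorem \ref{Thm gen} together with the cardinality identity $|B_n(\bar 1\bar 2,\overline{k+1}\dots\bar 1)|=\dim H_{k,n}$. Where you genuinely diverge is in the key step $\varphi\bigl(\tilde F_{k+1}^{(1,0)}\bigr)=0$. The paper argues by contradiction: a non-zero image would produce a one-dimensional representation with $t\mapsto 0$, $\sigma_i\mapsto 1$ (equivalently $\sigma_0\mapsto -\tfrac1k$, $\sigma_i\mapsto 1$), which is exactly the representation $\lambda=(k,n)$ that disappears from the list \eqref{1rep FP} when $n>k$. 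You instead reduce to $n=k+1$ via the chain inclusion $H_{k,k+1}\hookrightarrow H_{k,n}$, use semisimplicity at that level (valid since $\kappa_1-\kappa_2=-(k+1)$) to compute that the kernel of $\mathcal{A}_{k+1}^{(0,k+1)}\twoheadrightarrow H_{k,k+1}$ is a single one-dimensional matrix block, and identify that block by elimination: the two one-dimensional representations of $H_{k,k+1}$ in \eqref{1rep FP} pull back to $(\emptyset,(k+1))$ and $((1^{k+1}),\emptyset)$, so the surviving blocks account for everything except $((k+1),\emptyset)$, whose central idempotent is $F_{k+1}^{(1,0)}$. Both routes ultimately rest on the classification \eqref{1rep FP}, but yours uses the \emph{existence} of the two remaining one-dimensional representations plus the dimension formulas \eqref{eq: dim A_n} and \eqref{eq: dim H_{k,n}}, while the paper uses the \emph{non-existence} of the third; your version has the merit of making fully explicit the step the paper leaves terse (why a non-zero image of a quasi-idempotent forces the existence of that one-dimensional representation), at the cost of a slightly longer detour through the level-$(k+1)$ dimension count.
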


\begin{proof}
The morphism $\varphi:\mathcal{A}_n^{(0,k+1)}\longrightarrow H_{k,n}$ defined in the proof of Theorem \ref{An iso} remains a morphism upon passing to the quotient by the two sided-ideal generated by $\tilde{F}_{k+1}^{(1,0)}$. Indeed, if $n\leq k$, there is nothing to verify since $\mathcal{A}_n^{(0,k+1),(k)}=\mathcal{A}_n^{(0,k+1)}$ while in the case where $n>k$, if the image of $\tilde{F}_{k+1}^{(1,0)}$ under $\varphi$ were non-zero in $H_{k,n}$, this in turn would imply the existence of the following one-dimensional representation of $H_{k,n}$ defined on the following generators (see Proposition \ref{quotient}): 
\[\begin{array}{ccc}
t & \longmapsto & 0,\\
\sigma_i & \longmapsto & 1,
\end{array}
\]
for $i=1,\dots,n-1$.
In particular, since $t=1+k\sigma_0$, we would have that $\sigma_0$ is sent via this representation on $-\frac{1}{k}$.
But we already discussed the non-existence of such a one-dimensional representation around \eqref{1rep FP}. Thus, the image of $\tilde{F}_{k+1}^{(1,0)}$ under $\varphi$ is zero in $H_{k,n}$ and then we obtain the morphism property.

Moreover, the morphism $\varphi$ remains surjective upon passing to the quotient.

Thus, it remains to prove that the latter is injective.

Since the family $\{ s_\omega \mid \omega \in B_n(\bar{1}\bar{2}, \overline{k+1} \dots \bar{1}) \}$ generates $\mathcal{A}_n^{(0,k+1),(k)}$ and has cardinality equal to the dimension of the algebra $H_{k,n}$, we deduce that \[\dim(\mathcal{A}_n^{(0,k+1),(k)}) \leq \dim(H_{k,n}),\] hence the map is injective.

Being both surjective and injective, the map is an isomorphism.
\end{proof}
In particular, in Theorem \ref{Iso}, we have proved that the set \[\{ s_\omega \mid \omega \in B_n(\bar{1}\bar{2},\overline{k+1} \dots \bar{1}) \}\] is a basis set of $\mathcal{A}_n^{(0,k+1),(k)}$.
\begin{Cor}
The family $\{ \varphi(s_\omega) \mid \omega \in B_n(\bar{1}\bar{2}, \overline{k+1} \dots \bar{1}) \}$ is a basis of the algebra $H_{k,n}$.
\end{Cor}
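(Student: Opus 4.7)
The plan is to deduce this corollary directly from Theorem \ref{Iso} together with Theorem \ref{Thm gen}. The two key inputs are that $\varphi$ is an algebra isomorphism between $\mathcal{A}_n^{(0,k+1),(k)}$ and $H_{k,n}$, and that the set $\{s_\omega \mid \omega \in B_n(\bar{1}\bar{2},\overline{k+1}\dots\bar{1})\}$ is already known to span $\mathcal{A}_n^{(0,k+1),(k)}$. Since any isomorphism carries a basis to a basis, it is enough to verify that the spanning set of Theorem \ref{Thm gen} is in fact a basis.

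First I would recall that Theorem \ref{Thm gen} provides the spanning property of $\{s_\omega\mid \omega\in B_n(\bar{1}\bar{2},\overline{k+1}\dots\bar{1})\}$ in $\mathcal{A}_n^{(0,k+1),(k)}$. Next I would count the cardinality of this set using \eqref{Bn cardinal}, which gives
\[
|B_n(\bar{1}\bar{2},\overline{k+1}\dots\bar{1})|=\sum_{i=0}^{\min(k,n)}\binom{n}{i}^2(n-i)!,
\]
and compare it with the dimension of $H_{k,n}$ given by \eqref{eq: dim H_{k,n}}. These two numbers coincide, so the spanning set has at most $\dim(H_{k,n})$ elements.

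Since Theorem \ref{Iso} asserts that $\varphi$ is an isomorphism, we have $\dim(\mathcal{A}_n^{(0,k+1),(k)})=\dim(H_{k,n})$. Therefore any spanning set of $\mathcal{A}_n^{(0,k+1),(k)}$ whose cardinality equals this common dimension must be linearly independent, hence a basis. This forces $\{s_\omega\mid \omega\in B_n(\bar{1}\bar{2},\overline{k+1}\dots\bar{1})\}$ to be a basis of $\mathcal{A}_n^{(0,k+1),(k)}$, and its image under the isomorphism $\varphi$ is then a basis of $H_{k,n}$.

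There is no real obstacle here: the whole argument is a short dimension count combined with the isomorphism from Theorem \ref{Iso}. In fact this basis property was already used inside the proof of Theorem \ref{Iso} to establish the inequality $\dim(\mathcal{A}_n^{(0,k+1),(k)})\le\dim(H_{k,n})$, so the corollary essentially records what that proof has already established.
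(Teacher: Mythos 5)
Your proposal is correct and follows essentially the same route as the paper: the paper also deduces the corollary from the spanning property of Theorem \ref{Thm gen}, the cardinality count \eqref{Bn cardinal} matching $\dim(H_{k,n})$, and the isomorphism of Theorem \ref{Iso}, so that the spanning set is forced to be a basis of $\mathcal{A}_n^{(0,k+1),(k)}$ and its image under $\varphi$ is a basis of $H_{k,n}$. Your closing remark that this was already implicit in the proof of Theorem \ref{Iso} matches the paper's own observation immediately preceding the corollary.
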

\begin{proof}
	The proof is straightforward using the same reasoning as in the proof of Corollary \ref{basis Hkn}.
\end{proof}
\begin{Exe}
\begin{enumerate}
\item[•] \underline{$n = 2$ and  $k = 1$}: The basis is 
\[
\{1, \sigma_1, t, \sigma_1 t, t \sigma_1, \sigma_1 t \sigma_1\},
\]
corresponding respectively to the images of 
\begin{equation}\label{A_2 basis}
\{1, s_1, x_1, s_1 x_1, x_1 s_1, s_1 x_1 s_1\}.
\end{equation}
The only element removed when passing to the quotient $\mathcal{A}_2\twoheadrightarrow \mathcal{A}_2^{(0,2),(1)}$ is $x_1s_1x_1$. It's exactly the element of $B_3(\bar{1}\bar{2})\setminus B_3(\bar{1}\bar{2},\bar{2}\bar{1}) $.

\item[•] \underline{$n=3$ and $k=2$}: The basis is:
\[
\begin{aligned}
\{ & 1,\ \sigma_2,\ \sigma_2\sigma_1,\ \sigma_2\sigma_1t,\ \sigma_2\sigma_1t\sigma_1,\ \sigma_2\sigma_1t\sigma_1\sigma_2,\ \sigma_1,\ \sigma_1\sigma_2,\ \sigma_1\sigma_2\sigma_1,\\
& \sigma_1\sigma_2\sigma_1t,\ \sigma_1\sigma_2\sigma_1t\sigma_1,\ \sigma_1\sigma_2\sigma_1t\sigma_1\sigma_2,\ \sigma_1t,\ \sigma_1t\sigma_2,\ \sigma_1t\sigma_2\sigma_1,\\
& \sigma_1t\sigma_2\sigma_1t,\ \sigma_1t\sigma_1,\ \sigma_1t\sigma_1\sigma_2,\ \sigma_1t\sigma_1\sigma_2\sigma_1,\ \sigma_1t\sigma_1\sigma_2\sigma_1t,\ \sigma_1t\sigma_1\sigma_2\sigma_1t\sigma_1,\\
& t,\ t\sigma_2,\ t\sigma_2\sigma_1,\ t\sigma_2\sigma_1t,\ t\sigma_1,\ t\sigma_1\sigma_2,\ t\sigma_1\sigma_2\sigma_1,\ t\sigma_1\sigma_2\sigma_1t,\ t\sigma_1\sigma_2\sigma_1t\sigma_1,\\
& t\sigma_1t,\ t\sigma_1t\sigma_2,\ t\sigma_1t\sigma_2\sigma_1 \}.
\end{aligned}
\]

The only element removed when passing to the quotient $\mathcal{A}_3^{(0,3)}\twoheadrightarrow \mathcal{A}_3^{(0,3),(2)}$ is $x_1s_1x_1s_2s_1x_1$. It's exactly the element of $B_3(\bar{1}\bar{2})\setminus B_3(\bar{1}\bar{2},\bar{3}\bar{2}\bar{1})$.
\end{enumerate}
\end{Exe}
\paragraph{Dimension of $\mathcal{A}_n^{\kappa,(k)}$.}
Let $\kappa\in \mathbb{C}^2$.

We have from Theorem \ref{Thm gen}, \eqref{eq: dim H_{k,n}} and \eqref{Bn cardinal} the following inequality:
\begin{equation}\label{inequality}
\dim(\mathcal{A}_n^{\kappa,(k)})\leq \dim(H_{k,n}).
\end{equation}

Assume now that $\kappa$ satisfy the condition \eqref{SS H_n}.

Thus, using the semisimplicity of the algebra $\mathcal{A}_n^{\kappa,(k)}$, the inequality \eqref{inequality} can be rewritten as:
\begin{equation}\label{inequality 2}
\sum_{\boldsymbol{\lambda}\in Irr(\mathcal{A}_n^{\kappa,(k)})}d_{\boldsymbol{\lambda}}^2\leq \sum_{i=0}^{\min(k,n)}\binom{n}{i}^2(n-i)!.
\end{equation}

If $n\leq k+1$, the inequality \eqref{inequality 2} is an equality. Indeed, in this case, the algebras $\mathcal{A}_n^{\kappa,(k)}$ and $\mathcal{A}_n^{(0,k+1),(k)}$ are both semisimple with $Irr(\mathcal{A}_n^{\kappa,(k)})=Irr(\mathcal{A}_n^{(0,k+1),(k)})$ and for each $\boldsymbol{\lambda}\in Irr(\mathcal{A}_n^{\kappa,(k)})$, the dimensions of the irreducible representations of both algebras indexed by $\boldsymbol{\lambda}$ are the same (see \eqref{dim W}). From the isomorphism \eqref{AW formula}, we deduce that the two algebras have the same dimension, and thus, from Theorem \ref{Iso}, we obtain the equality of dimensions.

However, in the case where $n>k+1$, the inequality \eqref{inequality 2} can be strict.

Indeed, for $k=1$, the Bratteli diagram for the chain $\lbrace \mathcal{A}_n^{\kappa,(1)}\rbrace_{n\geq 0}$ for the first four levels, where the bipartitions shown are those in which the first component is a partition with at most 1 column and the second component is a partition of length at most 1:

\begin{center}
\begin{tikzpicture}[scale=0.22]
\node at (0,0) {$(\emptyset,\emptyset)$};
\node at (24,0) {$\dim(\mathcal{A}_0^{\kappa,(1)})=1$};

\draw[thick] (-0.5,-1.5) -- (-5.5,-3.5);
\draw[thick] (0.5,-1.5) -- (5.5,-3.5);

\node at (-8,-5) {$($};\diag{-7.5}{-4.5}{1};\node at (-5,-5) {$,\,\emptyset)$};
\node at (5,-5) {$(\emptyset\,,$};\diag{6.5}{-4.5}{1};\node at (8,-5) {$)$};
\node at (24,-5) {$\dim(\mathcal{A}_1^{\kappa,(1)})=2$};

\draw[thick] (-6,-6.5) -- (-8.5,-9.5);
\draw[thick] (-4.5,-6.5) -- (-1.5,-9.5);
\draw[thick] (4.5,-6.5) -- (1.5,-9.5);
\draw[thick] (6,-6.5) -- (8.5,-9.5);

\node at (-11,-11) {$($};\diagg{-10}{-10}{1}{1};\node at (-7,-11) {$,\,\emptyset)$};
\node at (-2,-11) {$($};\diag{-1.5}{-10.5}{1};\node at (0,-11) {$,$};\diag{0.5}{-10.5}{1};\node at (2,-11) {$)$};
\node at (7,-11) {$(\emptyset\,,$};\diag{8.5}{-10.5}{2};\node at (11,-11) {$)$};
\node at (24,-11) {$\dim(\mathcal{A}_2^{\kappa,(1)})=6$};

\draw[thick] (-10,-12.5) -- (-12,-15.5);
\draw[thick] (-7,-12.5) -- (-5,-15.5);

\draw[thick] (-1,-12.5) -- (-2,-15.5);
\draw[thick] (1,-12.5) -- (2,-15.5);

\draw[thick] (7,-12.5) -- (5,-15.5);
\draw[thick] (10,-12.5) -- (12,-15.5);

\node at (-15,-17) {$($};\diaggg{-14}{-16}{1}{1}{1};\node at (-11,-17) {$,\,\emptyset)$};
\node at (-6,-17) {$($};\diagg{-5}{-16.5}{1}{1};\node at (-3,-17) {$,$};\diag{-2.5}{-16.5}{1};\node at (-1,-17) {$)$};
\node at (1,-17) {$($};\diag{1.5}{-16.5}{1};\node at (3,-17) {$,$};\diag{3.5}{-16.5}{2};\node at (6,-17) {$)$};
\node at (10,-17) {$(\emptyset\,,$};\diag{11.5}{-16.5}{3};\node at (15,-17) {$)$};
\node at (24,-17) {$\dim(\mathcal{A}_3^{\kappa,(1)})=20$};
\end{tikzpicture}
\end{center}
The dimension of $\mathcal{A}_n^{\kappa,(1)}$ is then easily calculated:
\begin{equation}\label{dim A1}
\dim(\mathcal{A}_n^{\kappa,(1)})=\sum_{i=0}^n d_{((1^{n-i}),(i))}^2 =\sum_{i=0}^n\binom{n}{i}^2=\binom{2n}{n}\ .
\end{equation}
For example, the equality \eqref{dim A1} for $n=3$ gives $\dim(\mathcal{A}_3^{\kappa,(1)})=20$. But we know that $H_{1,n}\cong \mathbb{C}\mathfrak{S}_{n+1}$ as algebras. Then for $n=3$, we have that $\dim(\mathbb{C}\mathfrak{S}_4)=24$, hence the strict inequality of dimensions.

In particular, the set $\{ s_\omega \mid \omega \in B_3(\bar{1}\bar{2},\bar{2}\bar{1})\}$, that is the set of elements $s_\omega$ where $x_1$ appears at most 1 time, is not linearly independant in $\mathcal{A}_3^{\kappa,(1)}$ if the latter is semisimple but it is for $\mathcal{A}_3^{(0,2),(1)}$ (see Theorem \ref{Iso}) which is not semisimple.

This same phenomenon occurs for larger values of $k$.

For instance, for $k=2$, the bipartitions which lie in $Irr(\mathcal{A}_4^{\kappa,(2)})$ are exactly those in which the first component is a partition with at most 2 columns and the second component is a partition of length at most 1, namely:
\begin{align*}
Irr(\mathcal{A}_4^{\kappa,(2)})=&\left\lbrace ((2,2),\emptyset),((2,1,1),\emptyset),((1,1,1,1),\emptyset),((2,1),(1)),((1,1,1),(1)),((2),(2)),\right. \\
&\left. ((1,1),(2)),((1),(3)),(\emptyset,(4)) \right\rbrace.
\end{align*}

From the equality \eqref{dimbipart} and the isomorphism \eqref{AW formula} for $\mathfrak{A}=\mathcal{A}_4^{\kappa,(2)}$, we obtain that: \[\dim(\mathcal{A}_4^{\kappa,(2)})=183,\] while we have for the fused permutations algebra the following dimension (see \eqref{eq: dim H_{k,n}}):
\[\dim(H_{2,4})=\sum_{i=0}^2\binom{4}{i}^2(4-i)!=192.\]

Hence we have that the inequality \eqref{inequality 2} is strict in this case.

In particular, the set $\{ s_\omega \mid \omega \in B_4(\bar{1}\bar{2},\bar{3}\bar{2}\bar{1})\}$ is not linearly independant in $\mathcal{A}_4^{\kappa,(2)}$ when the latter is semisimple but it is for $\mathcal{A}_4^{(0,3),(2)}$ which is not semisimple.

\bigskip
Recall that the the basis \eqref{basis set} is well adapted to the inclusion $B_{n-1}\subset B_n$. Then, for $k<n$, we identify an element of $B_k$ as an element of $B_n$ via this inclusion.
\begin{Cor}
The algebra $H_{k,n}$ is generated by the generators $x_1, s_1,\ldots ,s_{n-1}$ subject only to the relations \eqref{eq: DR1}-\eqref{eq: DR4} with the following two additional relations:
\begin{equation}\label{eq: (x_1s_1)^2}
(x_1s_1)^2=x_1s_1x_1-x_1s_1+x_1s_1,
\end{equation}
\begin{equation}\label{eq: k+1}
\Delta_k=\sum_{\omega\in B_{k+1}(\bar{1}\bar{2})\setminus \lbrace \Delta_k\rbrace}\lambda_\omega s_\omega,
\end{equation}
where the $\lambda_\omega$ are the coefficients appearing in \eqref{new quo}.
\end{Cor}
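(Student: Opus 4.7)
The plan is to string together the three quotient steps that lead from $\hat{\mathcal{H}}_n^{(0,k+1)}$ down to $H_{k,n}$, reading off at each stage the new defining relation introduced. Throughout I would use that, by the alternative presentation of $\hat{\mathcal{H}}_n^{\kappa}$ recalled at the beginning of Section~3.1 (following \cite[Proposition 7.8.1]{K}), the algebra $\hat{\mathcal{H}}_n^{\kappa}$ is generated by $x_1,s_1,\dots,s_{n-1}$ subject only to relations \eqref{eq: DR1}--\eqref{eq: DR4}. Specialising at $\kappa_1=0$ and $\kappa_2=k+1$ yields the required presentation of $\hat{\mathcal{H}}_n^{(0,k+1)}$.

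Next, by definition $\mathcal{A}_n^{(0,k+1)}$ is the quotient of $\hat{\mathcal{H}}_n^{(0,k+1)}$ by \eqref{(x_1s_1)^2 general}, and substituting $\kappa_1=0$ there gives, after routine simplification, relation~\eqref{eq: (x_1s_1)^2}. Then, by Definition~\ref{def: A_n^k}, $\mathcal{A}_n^{(0,k+1),(k)}$ is the further quotient by $\tilde{F}_{k+1}^{(1,0)}=0$. To turn this last equation into an explicit relation I would apply Proposition~\ref{coef} with $n=k+1$: once $\tilde{F}_{k+1}^{(1,0)}$ is expanded in the basis $\{s_\omega\mid \omega\in B_{k+1}(\bar{1}\bar{2})\}$ of $\mathcal{A}_{k+1}^{(0,k+1)}$ (a basis thanks to Corollary~\ref{An basis}), the element $\Delta_k$ appears with the non-zero coefficient $(k+1)!$. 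Consequently $\tilde{F}_{k+1}^{(1,0)}=0$ can be solved for $\Delta_k$, yielding exactly \eqref{new quo}, i.e.\ relation~\eqref{eq: k+1}. Finally, Theorem~\ref{Iso} supplies the algebra isomorphism $\mathcal{A}_n^{(0,k+1),(k)}\cong H_{k,n}$ identifying $x_1$ with $t$ and $s_i$ with $\sigma_i$; composing the three quotient steps with it gives the announced presentation of $H_{k,n}$.

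The only delicate point, and what I expect to be the main (though mild) obstacle, is to rule out that the stated relations impose \emph{too few} constraints, i.e.\ that the abstract algebra $\mathcal{B}$ defined by those generators and relations could be strictly larger than $H_{k,n}$. This will be handled automatically: by construction there is a tautological surjection $\mathcal{B}\twoheadrightarrow H_{k,n}$ sending $x_1\mapsto t$ and $s_i\mapsto\sigma_i$, and the arguments in the proofs of Proposition~\ref{base A_n} and Theorem~\ref{Thm gen} rely only on the braid/commutation relations together with \eqref{eq: (x_1s_1)^2} and \eqref{eq: k+1}, so they transport verbatim to $\mathcal{B}$ and show that $\{s_\omega\mid \omega\in B_n(\bar{1}\bar{2},\overline{k+1}\dots\bar{1})\}$ already spans $\mathcal{B}$. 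Since this spanning set has cardinality equal to $\dim H_{k,n}$ by \eqref{Bn cardinal} and \eqref{eq: dim H_{k,n}}, the surjection is forced to be an isomorphism, which concludes the proof.
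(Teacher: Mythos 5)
Your proposal is correct and follows essentially the same route as the paper: identify the algebra presented by \eqref{eq: DR1}--\eqref{eq: DR4}, \eqref{eq: (x_1s_1)^2} and \eqref{eq: k+1} with $\mathcal{A}_n^{(0,k+1),(k)}$ (via the presentation of $\hat{\mathcal{H}}_n^{(0,k+1)}$, the definition of $\mathcal{A}_n^{\kappa}$, and the equivalence of $\tilde{F}_{k+1}^{(1,0)}=0$ with \eqref{new quo} from Proposition \ref{coef}), then invoke the isomorphism of Theorem \ref{Iso}. Your closing paragraph re-establishing injectivity via the spanning set and dimension count is redundant, since that is exactly the content of the injectivity part of Theorem \ref{Iso}, but it is not incorrect.
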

\begin{proof}
By definition, the algebra $\mathcal{A}_n^{(0,k+1),(k)}$ (see Definition \ref{def: A_n^k}) is exactly the algebra mentionned in the statement.  

Hence, the result follows from the isomorphism (\ref{Isomorphism}) between the algebras $\mathcal{A}_n^{(0,k+1),(k)}$ and $H_{k,n}$.
\end{proof}


\begin{thebibliography}{99}
\bibitem{GW} R. Goodman, N. Wallach, \emph{Symmetry, Representations and Invariants}, Grad. Texts in Math., 255
Springer, Dordrecht, 2009.
\bibitem{K} A. Kleshchev, \emph{Linear and projective representations of symmetric groups}, Cambridge Tracts in Math., 163
Cambridge University Press, Cambridge, 2005.
\bibitem{HM}
J. Hu, A. Mathas, \emph{Seminormal forms and cyclotomic quiver Hecke algebras of type A}, Math. Ann. 364 (2016), no. 3-4, 1189–1254, \href{https://arxiv.org/pdf/1304.0906}{\texttt{arXiv:1304.0906}}.
\bibitem{CP} 
N. Cramp\'e and L. Poulain d'Andecy, 
\emph{Fused braids and centralisers of tensor representations of $U_q(gl_N)$},
Algebr. Represent. Theor. (2023),  no. 3, 901–955, 
\href{https://arxiv.org/abs/2001.11372}{\texttt{arXiv:2001.11372}}.
\bibitem{ZP} 
L. Poulain d'Andecy and M. Zaimi, 
\emph{Fused Hecke algebra and one-boundary algebras},
Pacific J. Math. 328 (2024), no. 1, 77–118, \href{https://doi.org/10.48550/arXiv.2306.10937}{\texttt{arXiv:2306.10937}}.
\bibitem{GP} M. Geick, G. Pfeiffer, \emph{Characters of finite Coxeter groups and Iwahori-Hecke Algebras}, London Math. Soc. Monogr. (N.S.), 21
The Clarendon Press, Oxford University Press, New York, 2000.
\bibitem{M} A.I. Molev, \emph{On the fusion procedure for the symmetric group}, Rep. Math. Phys. 61 (2008), no. 2, 181–188, \href{https://arxiv.org/abs/math/0612207v5}{\texttt{arXiv:math/0612207v5}}.
\bibitem{JS} J.R. Stembridge, \emph{Some combinatorial aspects of reduced words in finite Coxeter groups}, Trans. Amer. Math. Soc. 349 (1997), no. 4, 1285–1332.
\bibitem{VO} A. M. Vershik, A. Yu. Okounkov, \emph{A new approach to the representation theory of the symmetric groups. II}, Zap. Nauchn. Sem. S.-Peterburg. Otdel. Mat. Inst. Steklov. (POMI) 307 (2004), Teor. Predst. Din. Sist. Komb. i Algoritm. Metody. 10, 57–98, 281; translation in
J. Math. Sci. (N.Y.) 131 (2005), no. 2, 5471–5494, \href{https://arxiv.org/pdf/math/0503040}{\texttt{arXiv:math/0503040}}.
\bibitem{G} J. A. Green, \textit{Polynomial representations of} $GL_n$, Lecture Notes in Math., 830
Springer, Berlin, 2007.
\bibitem{L} S. Lang, \textit{Algebra}, Grad. Texts in Math., 211
Springer-Verlag, New York, 2002.
\bibitem{B} N. Bourbaki, \textit{Groupes et Algèbres de Lie, Chp. IV–VI}, Elem. Math. (Berlin)
Springer-Verlag, Berlin, 2002.
\bibitem{SR} Stanley, Richard P., \emph{Enumerative Combinatorics., Vol. 2.}, Cambridge Stud. Adv. Math., 208
Cambridge University Press, Cambridge, [2024].
\end{thebibliography}
\end{document}